\documentclass[12pt]{amsart}
\usepackage{amssymb,amsfonts,amsmath,amsopn,amstext,amscd,latexsym,amsthm,enumerate,mathrsfs,bbm,color}

\usepackage[margin=0.85in]{geometry}
\usepackage[colorlinks=true,linkcolor=blue,urlcolor=blue, citecolor=blue]{hyperref}

\usepackage[all,cmtip]{xy}
\usepackage{longtable}
\usepackage{graphicx}
\usepackage{float} 
\usepackage{verbatim}

\newtheorem{theorem}{Theorem}[section]
\newtheorem{thm}[theorem]{Theorem}
\newtheorem{lemma}[theorem]{Lemma}

\newtheorem{lem}[theorem]{Lemma}

\newtheorem{prop}[theorem]{Proposition}
\newtheorem{cor}[theorem]{Corollary}

\newtheorem{prob}[theorem]{Problem}

\theoremstyle{remark}

\numberwithin{equation}{section}

\DeclareMathOperator{\Sz}{Sz}
\DeclareMathOperator{\PSL}{PSL}
\DeclareMathOperator{\PSU}{PSU}

\begin{document}

\title[On finite groups with element order set of width $3$]
{On finite groups with element order set of width $3$}

\author{Eric Min}
\address{Westwood High School, 12400 Mellow Meadow Drive, Austin, TX 78750}
\email{eric.lin.min7@gmail.com}

\author{Chloe So}
\address{BASIS Independent Silicon Valley, 1290 Parkmoor Avenue, San Jose, CA 95126}
\email{sochloekaylee@gmail.com}

\author{Yong Yang}
\address{Department of Mathematics, Texas State University, 601 University Drive, San Marcos, TX 78666, USA.}
\makeatletter
\email{yang@txstate.edu}

\author{Eddy Zhang}
\address{Parkland High School, 2700 N Cedar Crest Boulevard, Allentown, PA 18104, USA.}
\email{eddyzhang10@gmail.com}

\author{Mengtian Zhang}
\address{College of Science, China Three Gorges University, Yichang, Hubei 443002, China.}
\email{zmt1110@hotmail.com}

\author{Cody Zhou}
\address{Clement’s High School 4200 Elkins Rd, Sugar Land, TX 77479}
\email{codyzhou64@gmail.com}

\date{\today}
%\thanks{}

\subjclass[2010]{{20D60}}

\keywords{finite groups; element orders; composition factors}

\begin{abstract}
We study finite groups in which every element order is divisible by at most three distinct primes. We determine the possible nonabelian composition factors, prove that a nonabelian simple group with this property has at most $15$ prime divisors in its order, and deduce that $\rho(G)\leq192$ for every finite group $G$ with $\alpha(G)\leq3$.
\end{abstract}

\maketitle

%%%%%%%%%%%%%%%%%%%%%%%%%%%%%%%%%%%%%%%%%%%%%%%%%%%%%%%%%%%%%%%%%%%%%%%%%%%%%%%%%%%%%%%%%%%%%%%%%%%%%%
\section{Introduction}

All groups considered throughout this paper are finite.

For a group $G$, let $\pi(G)$ denote the set of prime divisors of $|G|$. We write
\[
 \alpha(G)=\max\{|\pi(o(g))|:g\in G\}
 \qquad\text{and}\qquad
 \rho(G)=|\pi(G)|.
\]
Thus $\alpha(G)$ measures the largest number of distinct prime divisors occurring in a single element order, whereas $\rho(G)$ counts the prime divisors of $|G|$.

In \cite{Shinew}, Shi asked whether $\rho(G)$ is bounded above by a function of $\alpha(G)$. Answering Shi's question, Zhang
\cite{Zhang} proved that if $G$ is solvable, then $\rho(G)$ is
bounded by a quadratic function of $\alpha(G)$ and that for
arbitrary $G$, $\rho(G)$ is bounded by a super-exponential function
of $\alpha(G)$. The result for solvable groups was improved later by
Keller \cite{Keller4} to a linear bound. More recently, Hung and Yang proved that $\rho(G)<210\alpha(G)^4$ for every finite group $G$ \cite{HungYang}.

Shi proposed  the following question in the Kourovka Notebook \cite[20.87]{Khukhro}:

\begin{prob}
    In finite groups, when the order of every element is divisible by at most two (distinct) primes, what are the nonabelian composition factors?
\end{prob}

Shi's question was answered in ~\cite{Li}. As an application, Shu and Yang \cite{ShuYang} showed that if $G$ is a finite group such that $\alpha(G) \leq 2$, then $\rho(G) \leq 15$. 

We consider the corresponding questions when three prime divisors are allowed:

\begin{prob}
    What are the nonabelian composition factors of a finite group in which every element order is divisible by at most three distinct primes?
\end{prob}

\begin{prob}\label{problem1.1} 
    If $G$ is finite and $\alpha(G)\leq3$, how large can $\rho(G)$ be?
\end{prob}

Our global bound is the following.

\begin{theorem}\label{main-theorem} Let $G$ be a finite group such that $\alpha(G) \leq 3$. Then $\rho(G) \leq 192$.
\end{theorem}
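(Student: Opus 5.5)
The plan is to follow the Shu--Yang strategy for $\alpha(G)\le 2$: separate the solvable part of $G$ from the nonabelian simple sections, bound each part using $\alpha\le 3$, and add the bounds. Let $R$ be the solvable radical of $G$. Then $\pi(G)=\pi(R)\cup\pi(G/R)$, so $\rho(G)\le\rho(R)+\rho(G/R)$, and it is enough to bound the two terms separately.

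\textbf{Solvable part.} $R$ is solvable and $\alpha(R)\le 3$. For this term I will use the best available linear bound for solvable groups, Keller's bound \cite{Keller4}, or a sharpened version for small $\alpha$ in the style of Zhang and Keller. I expect something of the form $\rho(R)\le c\cdot 3+d$, which gives a bound of a few dozen at most. If a sharp small-$\alpha$ statement is not available, I will argue directly. Take a Hall system and a chain of normal subgroups with nilpotent factors, as in Zhang's argument. A prime-power order element acting on a module for other primes must have fixed points, by coprime action and the structure of Frobenius-like sections. This lets one build elements whose orders involve many primes, and so it bounds the Fitting height and the number of primes in each Fitting factor in terms of $\alpha$.

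\textbf{Nonsolvable part.} The group $\bar G=G/R$ has trivial solvable radical, so $N:=\mathrm{Soc}(\bar G)=S_1\times\cdots\times S_k$ with each $S_i$ nonabelian simple, and $\bar G\le \mathrm{Aut}(N)$. Commuting elements from distinct direct factors multiply their prime sets. Since each $S_i$ has even order and contains elements whose orders have at least one further prime, the condition $\alpha\le 3$ forces $k$ to be small; I expect $k\le 3$, with strong restrictions when $k\ge 2$. For each factor I will use the classification step announced in the abstract: determine the simple $S$ with $\alpha(S)\le 3$ and show $\rho(S)\le 15$. Then $\bar G/N$ embeds in $\mathrm{Out}(N)$. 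This is built from outer automorphism groups of the $S_i$ (diagonal, field and graph automorphisms, which are small and solvable-ish) together with a permutation group of degree $k\le 3$. Hence $\rho(\bar G/N)$ is bounded by a small constant, at most a few extra primes coming from field automorphism orders. Since a field automorphism of order $r$ centralizes a subfield subgroup, the condition $\alpha\le 3$ also limits these extra primes.

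\textbf{Assembling the bound and the main obstacle.} Adding $\rho(R)+\rho(N)+\rho(\bar G/N)$ with $\rho(N)\le 15k$ will give an explicit constant, which the authors evidently arranged to equal $192$. The dominant contribution presumably comes from the solvable radical via a linear bound. The main obstacle is the simple-group step: classifying $S$ with $\alpha(S)\le 3$ and proving $\rho(S)\le 15$. This needs a pass through the classification. Alternating groups $A_n$ require small $n$, since disjoint cycles of distinct prime lengths give large $\alpha$. For sporadic groups one checks the tables. For groups of Lie type in rank $\ge 2$ over large fields, tori of order involving several primitive prime divisors produce elements with many prime divisors, so only low-rank families such as $\PSL_2(q)$, $\PSL_3(q)$, $\PSU_3(q)$, $\Sz(q)$ and $^2G_2(q)$ survive. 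For these one bounds the number of primes dividing $q$ and $q^2-1$ (and the analogous cyclotomic factors) using the maximal tori, which are cyclic or nearly cyclic. For each such family I will first bound $\rho$ by $1+\sum_{\text{tori}}|\pi(T)|\le 1+3\cdot(\#\text{torus classes})$, and then refine this case by case to reach $15$.
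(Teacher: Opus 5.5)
\textbf{The single radical--socle split does not close.} The step that fails is your claim that the socle $N=S_1\times\cdots\times S_k$ of $\bar G=G/R$ has $k\le 3$. Commuting elements taken from different factors have prime set equal to the union of their prime sets. This union only grows when factors contribute primes the others lack. So the argument bounds the size of a minimal subfamily of factors whose orders already cover $\pi(N)$; this is how the paper gets $\rho(N)\le 15$ in Lemma~\ref{lem:semisimple-rho}. It does not bound $k$: $\alpha(A_5^k)\le 3$ for every $k$, so your $\rho(N)\le 15k$ gives no bound.

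More seriously, $\bar G/N\le\mathrm{Out}(N)$ contains the action of $\bar G$ on an unbounded number of simple factors. That action can be an arbitrary, in particular nonsolvable, permutation group. For example, $A_5\wr A_5$ has $\alpha\le 3$, trivial solvable radical and socle $A_5^5$, yet $\bar G/N\cong A_5$. So $\rho(\bar G/N)$ is not ``a few field-automorphism primes''. Instead $\bar G/N$ is again an arbitrary group with $\alpha\le 3$, and the estimate $\rho(R)+\rho(N)+\rho(\bar G/N)$ just restates the original problem.

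\textbf{What the paper does instead.} It iterates the decomposition along the full series $1=S_0\le R_1<S_1<\cdots<R_m<S_m\le G$, alternating solvable radicals and socles as in Hung--Yang. Each solvable layer is bounded by Keller's $\rho\le 8$ for solvable groups with $\alpha\le 3$. Each socle layer is bounded by $15$ via the covering-subfamily lemma. It then bounds the number of layers: it asserts, citing Hung--Yang, that the series yields a solvable subgroup $H$ with $\rho(H)=m$. Keller's bound then gives $m\le 8$ and $\rho(G)\le 8+8(15+8)=192$.

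If you reconstruct this, be aware that the raw number of nonsolvable layers is not bounded in terms of $\alpha$. The iterated wreath products $A_5\wr A_5\wr\cdots\wr A_5$ have $\pi=\{2,3,5\}$ and arbitrarily many such layers, so no solvable $H$ can have $\rho(H)=m$ there. The count the solvable-subgroup argument really has to control, and the one the final sum should run over, is the number of layers that contribute primes not already present lower in the series.
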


In Section~2 we determine the possible nonabelian composition factors of groups $G$ with $\alpha(G)\leq3$. Section~3 uses this classification to bound $\rho(S)$ for nonabelian simple groups $S$ with $\alpha(S)\leq3$. In Section~4 we pass from simple groups to arbitrary finite groups and prove Theorem~\ref{main-theorem}.

\section{Classification of simple groups with element order set of width $3$}

Let $\omega(G)$ denote the set of element orders of $G$, called the spectrum of $G$. The spectra of finite simple groups have been studied extensively; see, for example, \cite{Deng,Grechkoseeva,Kantor,Shi2,Vasilev}. We shall use in particular Buturlakin's descriptions in \cite{Buturlakin1,Buturlakin2,Buturlakin3,Buturlakin4,Buturlakin5,BrandlShi}, together with the results cited below for the remaining families. 

%To prove Problem 1.1, we traced it back to another problem. And addressed Problem 1.1 further within the context of solving the following problem:

%\begin{prob}
    %What are the nonabelian composition factors of finite groups in which the order of every element is divisible by at most three (distinct) primes?
%\end{prob}

We shall prove the following classification.

\begin{thm}\label{main results} 
Let $G$ be a finite group in which every element order is divisible by at most three distinct primes. Then every nonabelian composition factor of $G$ is isomorphic to one of the following groups.
\begin{enumerate}[$(1)$]
\item ${\rm A}_n$, where $5\leq n\leq18$.
\item ${\rm M}_{11}, {\rm M}_{12}, {\rm M}_{22}, {\rm M}_{23}, {\rm M}_{24}, {\rm J}_1, {\rm J}_2, {\rm J}_3, {\rm J}_4, {\rm Suz}, {\rm HS}, {\rm McL}, {\rm Co}_3, {\rm Co}_2, \\ 
{\rm Co}_1, He, {\rm Fi}_{22}, {\rm Fi}_{23}, {{\rm Fi}_{24}}', {\rm HN}, {\rm Th}, {\rm B}, {\rm M}, {\rm O'N}, {\rm Ly}, {\rm Ru}$.
\item $\PSL_{2}(q)$. If $p=2$, then $q\in B_1$, otherwise $q\in B_2$.\\
$\PSL_{3}(q)$, $q\in B_3$. \\
$\PSL_{4}(3),\PSL_{4}(4),\PSL_{4}(5),\PSL_{4}(7),\PSL_{4}(8),\PSL_{4}(9),\\\PSL_{4}(17),$\\
$\PSL_{5}(2),\PSL_{5}(3),\PSL_{5}(4),\PSL_{5}(5),$\\
$\PSL_{6}(2),\PSL_{6}(3),\PSL_{6}(4),\PSL_{7}(2),\PSL_{7}(3),\PSL_{8}(2).$
\item $\PSU_{3}(q)$, $q\in B_4$.\\
$\PSU_{4}(3), \PSU_{4}(4),\PSU_{4}(5),\PSU_{4}(7),\PSU_{4}(8),\\\PSU_{4}(9),\PSU_{4}(17)$\\
$\PSU_{5}(2), \PSU_{5}(3), \PSU_{5}(4),\PSU_{5}(7),\PSU_{5}(9)$\\
$\PSU_{6}(2), \PSU_{6}(3),\PSU_{7}(2), \PSU_{7}(3),\PSU_{8}(2), \PSU_{9}(2).$
\item ${\rm PSp}_{4}(q)$, $q\in B_5\cup B_{5,2}$.\\
${\rm PSp}_{6}(3),{\rm PSp}_{6}(9),{\rm PSp}_{6}(5),{\rm PSp}_{6}(7),{\rm PSp}_{8}(3),{\rm PSp}_{10}(3)$.
\item $\Omega_7(3),\Omega_7(9),\Omega_7(5),\Omega_7(7),\Omega_9(3),\Omega_{11}(3), \newline \Omega_7(2),\Omega_7(4), \Omega_7(8), \Omega_9(2), \Omega_9(4), \Omega_{11}(2), \Omega_{13}(2), \newline \rm P\Omega^{+}_8(3),\rm P\Omega^{+}_8(5),\rm P\Omega^{+}_8(7),\rm P\Omega^{+}_8(9),\rm P\Omega^{-}_8(3),\\ \rm P\Omega^{+}_{10}(3), \rm P\Omega^{-}_{10}(3), \rm P\Omega^{+}_{12}(3), \rm P\Omega^{-}_{12}(3), \newline \Omega_8^{+}(2), \Omega_8^{-}(2), \Omega_8^{+}(4), \Omega_8^{-}(4),\\ \Omega_{10}^{+}(2), \Omega_{10}^{-}(2), \Omega_{12}^{+}(2), \Omega_{12}^{-}(2), \Omega_{14}^{+}(2)$.
\item $F_4(2),F_4(3),F_4(4)$.
\item $E_6(2),^2E_6(2)$.
\item $G_2(q)$, where $q\in B_6$ and $q>2$.
\item $^3D_4(2),^3D_4(4),^3D_4(8),^3D_4(3)$.
\item $\Sz(q)$, $q\in B_7$. 
\item $^2G_2(q)$, $q\in B_8$.
\item $^2F_4(2)', ^2F_4(q)$, $q\in B_9$.
\end{enumerate}

Let $q$ be a power of a prime $p$. The set $B_i$ for $i=1,2,\dots,9$ is defined as:

    $B_1=\{q=2^n>2\ \big|\ |\pi(q+1)|\leq 3, |\pi(q-1)|\leq 3\}$.
    
    $B_2=\{q>3\ \big|\ |\pi(\frac{q+1}{2})|\leq 3, |\pi(\frac{q-1}{2})|\leq 3\}$ where $p>2$. 
    
    $B_3=\{q\ \big|\ |\pi(\frac{q^2+q+1}{(3,q-1)})|\leq 3, |\pi(q^2-1)|\leq 3, |\pi(\frac{p(q-1)}{(3,q-1)})|\leq 3\}$.
    
    $B_4=\{q>2\ \big|\ |\pi(\frac{q^2-q+1}{(3,q+1)})|\leq 3, |\pi(q^2-1)|\leq 3, |\pi(\frac{p(q+1)}{(3,q+1)})|\leq 3\}$.
    
    \[
    B_5=\left\{q=p^f\ \middle|\ \begin{array}{l}
    p\neq2,\quad \left|\pi\!\left(\frac{q^2-1}{2}\right)\right|\leq3,\quad
    \left|\pi\!\left(\frac{q^2+1}{2}\right)\right|\leq3,\\[2pt]
    |\pi(p(q-1))|\leq3,\quad |\pi(p(q+1))|\leq3
    \end{array}\right\}.
    \]
    
    \[
    B_{5,2}=\left\{q=2^f,\ f\geq2\ \middle|\ \begin{array}{l}
    |\pi(q^2-1)|\leq3,\quad |\pi(q^2+1)|\leq3,\\[2pt]
    |\pi(2(q-1))|\leq3,\quad |\pi(2(q+1))|\leq3
    \end{array}\right\}.
    \]
    
    \[
    B_6=\left\{q=p^f\ \middle|\ \begin{array}{l}
    |\pi(q^2-1)|\leq3,\quad |\pi(q^2-q+1)|\leq3,\quad |\pi(q^2+q+1)|\leq3,\\[2pt]
    |\pi(p(q-1))|\leq3,\quad |\pi(p(q+1))|\leq3\quad
    \text{if }p=2\text{ or }p>5
    \end{array}\right\}.
    \]
    
    \[
    B_7=\left\{q=2^{2n+1},\ n\geq1\ \middle|\ \begin{array}{l}
    |\pi(2^{2n+1}-1)|\leq3,\\[2pt]
    |\pi(2^{2n+1}+2^{n+1}+1)|\leq3,\quad
    |\pi(2^{2n+1}-2^{n+1}+1)|\leq3
    \end{array}\right\}.
    \]

    \[
    B_8=\left\{q=3^{2n+1},\ n\geq1\ \middle|\ \begin{array}{l}
    |\pi(3^{2n+1}+3^{n+1}+1)|\leq3,\quad
    |\pi(3^{2n+1}-3^{n+1}+1)|\leq3,\\[2pt]
    |\pi(3^{2n+1}-1)|\leq3,\quad
    \left|\pi\!\left(\frac{3^{2n+1}+1}{2}\right)\right|\leq3
    \end{array}\right\}.
    \]

    $B_9=\left\{q=2^{2n+1},\ n\geq1\ \middle|\ \begin{array}{l}
    |\pi(q^2-1)|\leq3,\quad |\pi(q^2+1)|\leq3,\quad |\pi(q^2-q+1)|\leq3,\\[2pt]
    |\pi(q^2+2^{3n+2}-2^{n+1}-1)|\leq3,\\
    |\pi(q^2-2^{3n+2}+2^{n+1}-1)|\leq3,\\
    |\pi(q^2+2^{3n+2}+q+2^{n+1}+1)|\leq3,\\
    |\pi(q^2-2^{3n+2}+q-2^{n+1}+1)|\leq3
    \end{array}\right\}.$
\end{thm}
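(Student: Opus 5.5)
The first step is to reduce to simple groups. Since $\alpha$ is inherited by sections ($\alpha(H)\le\alpha(G)$ for $H\le G$, and for a quotient $G/N$ an element $gN$ of order $m$ lifts to an element whose order is divisible by $m$), every composition factor $S$ of $G$ satisfies $\alpha(S)\le 3$. So it suffices to find all nonabelian simple $S$ with $\alpha(S)\le3$. We do this by running through the classification of finite simple groups and reading off the spectrum $\omega(S)$ family by family. Because $\omega(S)$ is closed under divisors, the condition $\alpha(S)\le 3$ only has to be checked on the maximal elements $\mu(S)$ of the spectrum.

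The sporadic groups and the exceptional groups of small rank over tiny fields are handled directly from the \textsc{Atlas} spectra. Every sporadic group has $\alpha\le3$; the only products of four primes one might worry about, for example in $\mathrm{M}$ or $\mathrm{B}$, do not occur as element orders. This gives list (2).

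For the alternating groups, $\mathrm{A}_n$ contains an element of order $2\cdot3\cdot5\cdot7$ once $n\ge 2+3+5+7+2=19$: take cycles of lengths $3,5,7$ and two disjoint transpositions, an even permutation needing $19$ points. For $n\le 18$ the order $210$ would require cycles whose lengths are divisible by $2,3,5,7$ with even total sign. The cheapest options are a $2$-cycle, a $3$-cycle, a $5$-cycle and a $7$-cycle, using $17$ points but odd; or a $6$-cycle with a $5$- and $7$-cycle, using $18$ points but again odd; or a $10$-cycle with a $3$- and $7$-cycle, using $20$ points. Other four-prime products are larger still. So $5\le n\le18$, which is list (1).

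For the groups of Lie type the tool is Buturlakin's description of $\mu(S)$ for the classical groups and the known spectra of the exceptional groups. These express the maximal orders as products $p^{a}\cdot\frac{[q^{n_1}\pm1,\dots,q^{n_k}\pm1]}{d}$, with $\sum n_i$ bounded by the rank. For rank $1$ and $2$ we impose $|\pi(\cdot)|\le3$ on each generator of $\mu(S)$, namely the tori orders and the semisimple-times-unipotent orders. This yields the defining conditions of $B_1,\dots,B_9$ verbatim, e.g.\ $\frac{q\pm1}{(2,q-1)}$ for $\PSL_2(q)$, the three tori for $\Sz(q)$, and $p(q-1)/(3,q-1)$ for $\PSL_3(q)$. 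For higher rank we need a bounding argument. If the rank is large, a maximal torus of type $q^{n_1}-1,\ q^{n_2}-1,\dots$ with pairwise distinct primitive prime divisors (by Zsigmondy) gives an element whose order has at least $4$ prime divisors. Even when Zsigmondy fails, $q^{n_1}-1$ and $q^{n_2}+1$ over suitable degrees contribute two new primes each. This forces $n$ to be bounded, roughly $n\le 8$ or $9$, and forces $q$ into a finite list. The reason is that for moderate rank a torus like $(q-1)(q^{2}+q+1)\cdots$ has at least $4$ prime factors once $q$ is large, because $q-1$, $q+1$, $q^2+1$, $q^2\pm q+1$ each carry at least one prime and they are nearly coprime. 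The surviving pairs $(n,q)$ are then checked individually against their spectra, giving lists (3)–(10).

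The main obstacle is this last step: carrying out, for each classical and exceptional family, the bookkeeping that shows exactly which $(n,q)$ survive. That requires small-$q$ exceptions to Zsigmondy ($q=2$, $n=6$; Mersenne and Fermat cases) and careful treatment of the gcd denominators and unipotent-times-semisimple orders. For each family I would first pick two or three explicit tori whose orders are products of coprime cyclotomic values $\Phi_i(q)$. I would show that four distinct primes appear as soon as $n$ or $q$ exceeds a threshold, and then finish by direct inspection of $\mu(S)$ for the finitely many remaining cases, computer-assisted where needed.
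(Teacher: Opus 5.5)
Your reduction to simple groups, the bound for ${\rm A}_n$ via the element of cycle type $2,2,3,5,7$, and the \emph{Atlas} check of the sporadic groups agree with the paper. Bounding the rank by a torus carrying four distinct Zsigmondy primes is a workable substitute for the paper's uniform observation that $p(q^{12}-1)$ is divisible by $2,3,5,7,13$.

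The gap is in how you obtain a \emph{finite} list of fields in moderate rank. You argue that a torus ``has at least $4$ prime factors once $q$ is large'' because $q\pm1$, $q^2+1$, $q^2\pm q+1$ each contribute a prime. This does not work. The size of $q$ produces no extra primes, and a torus is only guaranteed roughly one new prime per distinct cyclotomic factor $\Phi_k(q)$ of its order. Consider the rank-$3$ families that lists (3)--(6) must cut down to finitely many $q$: $\PSL_4(q)$, $\PSU_4(q)$, ${\rm PSp}_6(q)$ and $\Omega_7(q)$. In each, every maximal torus involves at most two distinct $\Phi_k(q)$, with orders like $(q+1)(q^2+1)$, $q^3\pm1$, $(q^2+1)(q\pm1)$, $(q^2-1)(q\pm1)$. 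So nothing you can prove prevents all semisimple orders from having at most three prime divisors for infinitely many $q$. Excluding those $q$ would need statements such as ``$q^2+q+1$ and $(q^2+1)/2$ are rarely prime powers'', which are out of reach. Hence your final ``check the surviving pairs individually'' has no finite set to run on.

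The missing idea, which drives the paper's argument, is to make the characteristic $p$ one of the counted primes. These groups contain a unipotent element commuting with a semisimple element of order $q^2-1$. This gives element orders $p(q^2-1)$, possibly divided by $(4,q\mp1)$ or $2$, but always still even. Since $p\nmid q^2-1$, the condition $\alpha\le 3$ leaves at most two primes for $(q-1)(q+1)$. Now $(q-1,q+1)\le 2$, both factors are even for odd $q$, and $3\mid q^2-1$ when $p\ne3$. This reduces the problem to Catalan-type equations such as $2^t\mp1=3^a$, $3^t-2^a=\pm1$, $2^a-3^b=\pm1$. Proposition~\ref{3.1} and Mih\u{a}ilescu's theorem settle these, giving $q\in\{2,3,4,5,7,8,9,17\}$ (Corollary~\ref{Corollary 11}). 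The orders $p(q^3-1)$, $p(q^4-1)$, $p(q^6-1)$ then give Corollaries~\ref{Corollary 12}--\ref{Corollary 14} for the larger classical and exceptional groups. You list unipotent-times-semisimple orders as bookkeeping, but they are what makes the field list finite.

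Separately, do not expect the rank-$2$ analysis to return $B_3$ verbatim. The image in $\PSL_3(q)$ of the cyclic torus of order $q^2-1$ has order $(q^2-1)/(3,q-1)$, so $q^2-1$ need not be an element order; for example $15\notin\omega(\PSL_3(4))$. The paper's proof asserts $q^2-1\in\omega(\PSL_3(q))$, and the resulting $B_3$ is too strict. Indeed $\PSL_3(43)$ has maximal element orders $631$, $616=2^3\cdot7\cdot11$, $602=2\cdot7\cdot43$ and $42$, so $\alpha=3$, yet $|\pi(43^2-1)|=4$. The analogous correction, with $(q^2-1)/(3,q+1)$, applies to $B_4$.
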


For a positive integer $n$, let $\pi(n)$ denote the set  of distinct primes dividing $n$. Denote by $\pi(G)=\pi(|G|)$ the set of distinct prime divisors of the order of $G$. For natural numbers $n_1,\ldots,n_k$, we denote by $(n_1,\ldots,n_k)$ their greatest common divisor, and by $[n_1,\ldots,n_k]$ their least common multiple. Let $\mathscr{F}$ be the set of Fermat primes and $\mathscr{M}$ be the set of Mersenne primes.

\begin{prop}\label{3.1}$($\cite[Proposition 3.1]{manz/wolf}$)$
Suppose $q^n-1=p^m$ for primes $p$ and $q$ and positive integers $m$ and $n$. Then
\begin{enumerate}[$(i)$]
\item $n=1$, $q\in\mathscr{F}$ and $p=2$;
\item $m=1$, $p\in\mathscr{M}$ and $q=2$; or
\item $n=2$, $m=q=3$ and $p=2$.
\end{enumerate}
\end{prop}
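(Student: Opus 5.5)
The statement to prove is Proposition~\ref{3.1}: if $q^n-1=p^m$ with $p,q$ prime and $m,n\ge 1$, then one of (i)--(iii) holds. The plan is a parity split on $q$, using factorization and Zsigmondy-type arguments.

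\emph{Case $q=2$.} Then $2^n-1=p^m$ with $p$ odd.
\begin{itemize}
\item If $n=1$, then $p^m=1$, which is impossible; so $n\ge 2$.
\item If $m\ge 2$, reduce modulo $4$: the left side is $\equiv 3 \pmod 4$, so $p^m\equiv 3\pmod 4$. This forces $m$ odd.
\item For odd $m$, write $p^m+1=(p+1)(p^{m-1}-p^{m-2}+\cdots+1)$. The second factor is a sum of $m$ odd terms, hence odd. Since $p^m+1=2^n$ is a power of $2$, that factor must equal $1$, which forces $m=1$.
\end{itemize}
So $2^n-1=p$ is prime, which is case (ii): $p$ is a Mersenne prime and $m=1$.

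\emph{Case $q$ odd.} Then $q^n-1$ is even, so $p=2$ and $q^n-1=2^m$.
\begin{itemize}
\item If $n=1$, then $q=2^m+1$ is prime. This forces $m$ to be a power of $2$, since an odd factor $d>1$ of $m$ would give the factor $2^{m/d}+1$ of $2^m+1$. Hence $q$ is a Fermat prime, which is case (i).
\item If $n$ is odd and $n>1$, then $(q^n-1)/(q-1)=1+q+\cdots+q^{n-1}$ is a sum of $n$ odd terms, hence odd and greater than $1$. This contradicts $q^n-1$ being a power of $2$.
\item If $n$ is even, write $q^n-1=(q^{n/2}-1)(q^{n/2}+1)$. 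Both factors are powers of $2$ differing by $2$, so they must be $2$ and $4$. Thus $q^{n/2}=3$, giving $q=3$, $n=2$, and $q^n-1=8=2^3$, so $m=3$. This is case (iii).
\end{itemize}

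There is no serious obstacle. The only point needing care is recording the parity arguments cleanly in the $q=2$ branch: the mod-$4$ step and the odd cofactor. Alternatively, one can simply cite \cite{manz/wolf}.
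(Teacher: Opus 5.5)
Your proof is correct and complete. The parity split on $q$, the mod-$4$ step forcing $m$ odd, the odd cofactor $(p^m+1)/(p+1)$, the Fermat-prime argument, and writing $q^n-1=(q^{n/2}-1)(q^{n/2}+1)$ as two powers of $2$ differing by $2$ all work as you state them.

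The paper gives no argument of its own here; it simply quotes \cite[Proposition 3.1]{manz/wolf}. Your self-contained elementary proof therefore differs only in that it supplies what the paper outsources. Despite your opening sentence, no Zsigmondy-type input is actually used or needed.
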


\begin{prop}\label{3.2}$($\cite[Proposition 3.2]{manz/wolf}$)$
Suppose $q^m-1=2^n\cdot3$ for a prime $q$ and positive integers $m$ and $n$. Then
\begin{enumerate}[$(i)$]
\item $m=1$ or
\item $m=2$ and $q\in\{5,7\}$.
\end{enumerate}
\end{prop}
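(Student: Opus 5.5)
We argue by elementary factorization, splitting according to the parity of $m$. First observe that $q$ must be odd: if $q=2$ then $q^m-1$ is odd, contradicting $n\geq1$. Assume $m\geq2$, since otherwise alternative $(i)$ holds and there is nothing to prove.

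\emph{Case $m$ odd, $m\geq3$.} We write $q^m-1=(q-1)(q^{m-1}+q^{m-2}+\cdots+q+1)$. The second factor is a sum of $m$ odd terms, hence odd. Since it divides $2^n\cdot3$, it must be $1$ or $3$. But it is at least $q^2+q+1\geq 13>3$, which is a contradiction. So no odd $m\geq3$ occurs.

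\emph{Case $m$ even, $m=2k$.} Here $(q^k-1)(q^k+1)=2^n\cdot3$. The two factors are even and differ by $2$, so their greatest common divisor is exactly $2$. Hence one factor is divisible by $2$ only once. Its odd part divides $3$, so that factor is $2$ or $6$. We run through the possibilities.
\begin{itemize}
\item $q^k+1=2$ is impossible, since $q\geq3$.
\item $q^k-1=2$ gives $q^k=3$ and product $2\cdot4=8$, which is not divisible by $3$.
\item $q^k-1=6$ gives $q^k=7$ and product $6\cdot 8=2^4\cdot3$, which is valid.
\item $q^k+1=6$ gives $q^k=5$ and product $4\cdot6=2^3\cdot3$, which is valid.
\end{itemize}
In the two valid cases $q^k$ is prime, so $k=1$. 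Thus $m=2$ and $q\in\{5,7\}$, which is alternative $(ii)$.

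No step is a real obstacle. The only care needed is being exhaustive in the even case, namely using the gcd being exactly $2$ to force one factor into $\{2,6\}$. One could instead invoke Zsigmondy's theorem, but the direct argument above suffices.
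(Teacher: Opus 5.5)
Your proof is correct and complete. For odd $m\geq3$, the odd cofactor $1+q+\cdots+q^{m-1}\geq13$ cannot divide $2^n\cdot3$. In the even case, $\gcd(q^k-1,q^k+1)=2$ forces one factor into $\{2,6\}$; your four subcases are then exhaustive, and the primality of $q^k\in\{5,7\}$ gives $k=1$.

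The paper gives no proof of its own here. The statement is quoted from Manz--Wolf, and it is not actually invoked anywhere later in the paper; only Proposition~\ref{3.1} is used. So there is no competing argument to compare against. What your version offers is self-containment: it needs only parity and a gcd computation, whereas the paper relies on Mih\u{a}ilescu's theorem and Zsigmondy's theorem for its other arithmetic lemmas.
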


\begin{cor} \label{Corollary 2.2}
If $2^n-1$ is a prime, then $n$ is a prime. If $2^n+1$ is a prime, then $n$ is a power of $2$.
\end{cor}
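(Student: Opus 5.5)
The statement to prove is Corollary~\ref{Corollary 2.2}. These are classical elementary facts, and I would derive each one directly from the factorization of $x^{ab}\pm 1$, without appealing to Propositions~\ref{3.1} or~\ref{3.2}.

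\emph{First assertion.} Suppose $2^n-1$ is prime but $n$ is not. Note $n\geq 2$, since $2^1-1=1$ is not prime. So write $n=ab$ with $1<a,b<n$. Then
\[
2^{ab}-1=(2^a-1)\bigl(2^{a(b-1)}+2^{a(b-2)}+\cdots+2^a+1\bigr),
\]
and $1<2^a-1<2^n-1$ because $1<a<n$. This is a nontrivial factorization, a contradiction. Hence $n$ is prime.

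\emph{Second assertion.} Suppose $2^n+1$ is prime and $n$ is not a power of $2$. Then $n$ has an odd divisor $m>1$, so write $n=km$ with $m$ odd and $m\geq 3$. Since $m$ is odd, $x+1$ divides $x^m+1$. Taking $x=2^k$ gives
\[
2^n+1=(2^k+1)\bigl(2^{k(m-1)}-2^{k(m-2)}+\cdots-2^k+1\bigr).
\]
Here $1<2^k+1<2^{km}+1$ because $k\geq 1$ and $m\geq 3$. This is again a nontrivial factorization, contradicting primality, so $n$ is a power of $2$ (including $n=1$, which gives $3$).

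There is no real obstacle. The only care needed is checking that the factors are proper (strictly between $1$ and the number itself) and handling the edge cases $n=1$ in each assertion. Alternatively, the first assertion could be read off from Proposition~\ref{3.1}(ii), but the direct argument is cleaner.
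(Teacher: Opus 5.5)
Your proof is correct, but it takes a different route from the paper. The paper's entire proof is the sentence that the corollary is a direct consequence of Proposition~\ref{3.1}.

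On the paper's route, one applies Proposition~\ref{3.1} with $q=2$ and $m=1$; this lands in case (ii), so $2^n-1\in\mathscr{M}$. One then applies it to the prime $2^n+1$ raised to the first power, so that the right-hand side is $2^n$; this lands in case (i), so $2^n+1\in\mathscr{F}$. The conclusions about $n$ follow only if one reads $\mathscr{M}$ as the primes $2^\ell-1$ with $\ell$ prime and $\mathscr{F}$ as the primes $2^{2^k}+1$. The real content is therefore carried by these conventions and by the cited source, not by anything argued in the paper.

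Your factorization argument supplies exactly that content from first principles. It uses only $2^a-1\mid 2^{ab}-1$ and $x+1\mid x^m+1$ for odd $m$, and it does not depend on how Fermat and Mersenne primes are defined. What the paper's citation buys is brevity, at the price of invoking a much stronger classification of $q^n-1=p^m$ for a more elementary fact.

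Your checks that the factors are proper are fine. The only tacit hypothesis is $n\geq1$ in the second assertion, since for $n=0$ the number $2^0+1=2$ is prime; the statement evidently intends this.
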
  
\begin{proof}
It is a direct result from Proposition \ref{3.1}.
\end{proof}

\begin{lem} \label{lemma 3} $($\cite[Lemma 2.4]{Li}$)$
Let $p$ is a prime and $q$ is a power of $p$, then either $q+1$ or $q-1$ must have a prime factor $3$ when $p\neq3$.
\end{lem}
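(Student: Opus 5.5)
The statement to prove is Lemma~\ref{lemma 3}: if $q=p^f$ with $p\neq 3$, then $3$ divides $q+1$ or $q-1$. The plan is a direct modular-arithmetic argument that uses none of the machinery above.

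Consider the three consecutive integers $q-1$, $q$ and $q+1$. Among any three consecutive integers exactly one is divisible by $3$, so it suffices to show that $3\nmid q$.

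Since $q=p^f$ is a prime power, the only prime divisor of $q$ is $p$. By hypothesis $p\neq 3$, so $3\nmid q$. Equivalently, $q\not\equiv 0 \pmod 3$, which forces $q\equiv 1$ or $q\equiv -1 \pmod 3$; in the first case $3\mid q-1$ and in the second $3\mid q+1$. This also covers $p=2$: then $q=2^f\equiv(-1)^f \pmod 3$, and the parity of $f$ decides which of $q\pm1$ is divisible by $3$.

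There is no real obstacle here. The only point needing care is to note that $3\nmid q$ relies on $q$ being a power of a single prime different from $3$.
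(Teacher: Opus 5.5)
Your argument is correct and complete. Since $q=p^f$ with $p\neq 3$, you get $3\nmid q$, hence $q\equiv \pm 1 \pmod 3$, and $3$ divides one of $q-1$, $q+1$. The paper gives no proof of its own here and simply imports the statement from \cite[Lemma 2.4]{Li}. Your three-consecutive-integers argument is therefore a self-contained replacement for that citation, and it is the standard proof of this fact.
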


\begin{lem} \label{lemma 4} $($\cite[Lemma 2.5]{Li}$)$
If $p$ is a prime number greater than $5$ and $q=p^n$, then $q^2-1$ is divisible by $24$. 
\end{lem}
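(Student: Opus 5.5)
The statement to prove is Lemma~\ref{lemma 4}: if $p>5$ is prime and $q=p^n$, then $24\mid q^2-1$. Since $24=8\cdot 3$ with $(8,3)=1$, I will prove $8\mid q^2-1$ and $3\mid q^2-1$ separately. Both facts depend only on $q$ being odd and prime to $3$. Here $q=p^n$ with $p>5$, so $p\neq 2,3$, and hence $(q,6)=1$.

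For the factor $8$, I use that $q$ is odd, say $q=2k+1$. Then
\[
q^2-1=(q-1)(q+1)=4k(k+1).
\]
One of $k$ and $k+1$ is even, so $8\mid q^2-1$.

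For the factor $3$, I invoke Lemma~\ref{lemma 3}. Since $p\neq 3$, it says that $3$ divides $q+1$ or $q-1$, and therefore $3\mid q^2-1$. Alternatively, this follows directly because $q\equiv\pm1\pmod 3$.

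Combining the two divisibilities with $(8,3)=1$ gives $24\mid q^2-1$. There is no real obstacle here. The only point to check is that the hypothesis $p>5$ (in fact $p>3$ would suffice) excludes $p=2$ and $p=3$, which is exactly what the two parity and mod-$3$ arguments need.
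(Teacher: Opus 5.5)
Your proof is correct and complete. Because $q$ is odd, $q^2-1=4k(k+1)$ is divisible by $8$; because $3\nmid q$, we have $q^2\equiv 1 \pmod 3$; and $(8,3)=1$.

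The paper gives no argument of its own and only cites \cite[Lemma 2.5]{Li}, so this elementary proof is all that is needed. Keep your remark that $p>3$ suffices. The paper applies this lemma in the proof of Corollary~\ref{Corollary 11} under the weaker hypothesis $p>3$, which includes $p=5$, and your version covers that use.
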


\begin{lem} \label{lemma 5} $($\cite[Lemma 2.6]{Li}$)$
If $(5,m)=1$ for integer $m>1$, then $5\mid (m^4-1).$
\end{lem}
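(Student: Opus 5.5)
The statement to prove is Lemma~\ref{lemma 5}: if $(5,m)=1$ and $m>1$, then $5\mid m^4-1$. The plan is to apply Fermat's little theorem directly. Since $5$ is prime and $5\nmid m$, we get $m^{5-1}=m^4\equiv 1 \pmod 5$, which is exactly $5\mid m^4-1$.

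For a self-contained check that does not quote Fermat, reduce $m$ modulo $5$. Because $(5,m)=1$, the residue of $m$ is one of $1,2,3,4$, that is, $m\equiv \pm1$ or $m\equiv\pm2 \pmod 5$.
\begin{itemize}
\item If $m\equiv\pm1\pmod 5$, then $m^2\equiv 1$, so $m^4\equiv 1\pmod 5$.
\item If $m\equiv\pm2\pmod 5$, then $m^2\equiv 4\equiv -1$, so $m^4\equiv 1\pmod 5$.
\end{itemize}
In both cases $5\mid m^4-1$.

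Equivalently, one can factor $m^4-1=(m-1)(m+1)(m^2+1)$. The first two factors handle $m\equiv\pm1$, and the third handles $m\equiv\pm2$, since then $m^2+1\equiv 0\pmod 5$.

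The hypothesis $m>1$ plays no role except to keep $m^4-1$ positive, so that divisibility is meaningful in the paper's later use with $m=q$ a prime power. The argument has no real obstacle; the only step needing care is the case split over residues coprime to $5$.
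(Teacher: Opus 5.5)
Your proof is correct: Fermat's little theorem with the prime $5$ gives $m^4\equiv1\pmod 5$ at once, and your residue check over $m\equiv\pm1,\pm2\pmod 5$ is a valid self-contained verification. The paper gives no proof of its own here and simply quotes \cite[Lemma 2.6]{Li}, but the Fermat argument you use is the standard one and the same fact the paper states and applies in the proof of Corollary~\ref{Corollary 15}, so your approach is essentially the paper's.
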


\begin{lem} \label{lemma 9} 
 Let $p$ be an odd prime and let $q = p^k$ for some positive integer $k$. Then the integers $\frac{q-1}{2} \text{and} \,\frac{q+1}{2}$ are coprime.
\end{lem}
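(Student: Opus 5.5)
The plan is to argue through a common divisor: any $d$ dividing both $\frac{q-1}{2}$ and $\frac{q+1}{2}$ must also divide their difference, and that difference is $1$. First note that both quantities are integers, because $q=p^k$ is odd and so $q-1$ and $q+1$ are both even.

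Concretely, let $d=\left(\frac{q-1}{2},\frac{q+1}{2}\right)$. Then
\[
d \;\Big|\; \frac{q+1}{2}-\frac{q-1}{2}=1,
\]
so $d=1$.

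The only point needing care is the degenerate case $q=3$, where $\frac{q-1}{2}=1$. Here coprimality holds trivially, and the argument above covers it without any change. An alternative route is to compute $(q-1,q+1)=2$ and then halve, but the difference argument is more direct. There is no real obstacle in either approach.
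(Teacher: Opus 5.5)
Your proposal is correct and follows the paper's proof essentially verbatim: you note that $q$ is odd, so both quantities are integers, and then observe that any common divisor divides their difference $\frac{q+1}{2}-\frac{q-1}{2}=1$. Your side remarks about $q=3$ and about the alternative via $(q-1,q+1)=2$ are harmless extras.
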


\begin{proof}
Since $p$ is odd, $q = p^k$ is odd, so $\frac{q-1}{2}$ and $\frac{q+1}{2}$ are integers. Let $d = \gcd\left(\frac{q-1}{2}, \frac{q+1}{2}\right)$, then $d$ divides their difference: $\frac{q+1}{2} - \frac{q-1}{2} = 1.$
Thus $d \mid 1$, which forces $d = 1$. Hence $\frac{q-1}{2}$ and $\frac{q+1}{2}$ are coprime.
\end{proof}

\begin{lem} \label{K4} 
The Diophantine equation $$2^m+1=3y^n$$ has no solution, where $m, n, y$ are integers such that $m\geq 2, n\geq 2, y\geq 2$.
\end{lem}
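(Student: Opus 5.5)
We must show that $2^m+1=3y^n$ has no solutions with $m,n,y\ge 2$. It suffices to treat $n$ prime: if $n=n_1n_2$ with $n_1$ prime, then $y^n=(y^{n_2})^{n_1}$, so a solution with exponent $n$ gives one with prime exponent $n_1$ and base $y^{n_2}\ge2$.

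\textbf{Case $n=2$.} Since $3\mid 2^m+1$, $m$ is odd. Then $y$ is odd, so $y^2\equiv1\pmod 8$ and $3y^2\equiv3\pmod 8$. But $2^m+1\equiv1\pmod 8$ for $m\ge3$, a contradiction. The case $m=2$ is impossible anyway, since $5$ is not divisible by $3$.

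\textbf{Case $n$ an odd prime.} Write $m=2k+1$ and put $x=2^{k}$. Then
\[
2\cdot 4^{k}+1=2x^2+1=3y^n .
\]
One could appeal to known results on $x^2+D=cy^n$ (Bugeaud--Shorey type results for the Ramanujan--Nagell family). I would instead try an elementary route exploiting that $x$ is a power of $2$.

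\begin{itemize}
\item Since $m$ is odd, $2^m+1=(2+1)(2^{m-1}-2^{m-2}+\cdots+1)$, and the second factor $\Phi=\frac{2^m+1}{3}$ equals $y^n$.
\item Then $2^m=3y^n-1$, so $3y^n\equiv1\pmod{2^m}$.
\item If $3\mid m$, say $m=3t$, factor $2^{3t}+1=(2^t+1)(4^t-2^t+1)$. The gcd of the factors divides $3$, which gives a perfect-power condition on each factor.
\item The general case I would reduce via Zsigmondy/primitive divisors, or via the factorization in $\mathbb{Z}[\sqrt{-2}]$:
\[
(1+x\sqrt{-2})(1-x\sqrt{-2})=3y^n .
\]
Since $3=(1+\sqrt{-2})(1-\sqrt{-2})$ and $\mathbb{Z}[\sqrt{-2}]$ is a UFD whose only units are $\pm1$, coprimality gives
\[
1+x\sqrt{-2}=\pm(1\pm\sqrt{-2})(a+b\sqrt{-2})^n .
\]
\end{itemize}

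Comparing coefficients leaves two binomial identities, one in the rational part and one in the $\sqrt{-2}$ part:
\[
\pm1=\mathrm{Re}\big[(1\pm\sqrt{-2})(a+b\sqrt{-2})^n\big]=\sum_j \binom{n}{2j}a^{n-2j}(-2b^2)^j \mp 2b\sum_j\binom{n}{2j+1}a^{n-2j-1}(-2b^2)^j .
\]
Reducing the rational-part equation modulo $b$ and modulo $2$ should force $a=\pm1$ and $b$ small. From there one checks the remaining possibilities directly, for example $y=a^2+2b^2=3$, which needs $2^m+1=3^{n+1}$. That equation has only $m=3$ ($9=3^2$) by Catalan/Mihăilescu, or elementarily by Proposition~\ref{3.1}, and $m=3$ would give $y^n=3$, which is impossible.

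\textbf{Main obstacle.} The hard step is the descent in $\mathbb{Z}[\sqrt{-2}]$: showing that the binomial identity $\pm1=\dots$ admits only trivial $(a,b)$ for an odd prime $n$. I would first try the congruence $a\equiv \pm1 \pmod{b}$ combined with a $2$-adic valuation argument using that $x=2^k$ is large. If this does not close, I would cite the known theorem that $x^2+2=3y^n$-type equations, equivalently $2x^2+1=3y^n$, have only the solution $x=1$, $y=1$.
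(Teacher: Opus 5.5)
Your reduction to prime $n$ is correct, and so is your treatment of $n=2$ (forcing $m$ odd, then comparing $3y^2\equiv 3$ with $2^m+1\equiv 1 \pmod 8$). The factorization $1+x\sqrt{-2}=\pm(1\pm\sqrt{-2})(a+b\sqrt{-2})^n$ in $\mathbb{Z}[\sqrt{-2}]$ is also set up correctly, even when $3\mid y$.

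The gap is that the odd-prime case, which is the real content of the lemma, is never proved. The claim that reducing the rational-part equation modulo $b$ and modulo $2$ ``should force $a=\pm1$ and $b$ small'' is unfounded. Modulo $b$ that equation says only $a^n\equiv\pm1\pmod b$, and modulo $2$ it says only that $a$ is odd. In the classical descent for $x^2+2=y^3$, one coefficient equation has the shape $1=b\cdot(\cdots)$, which pins down $b$. Here the norm-$3$ factor $1\pm\sqrt{-2}$ mixes the two coordinates. What you get is $F_n(a,b)=\pm1$ for a binary form $F_n$ of degree $n$ whose $a^n$ and $b^n$ coefficients are both nonzero, i.e.\ a Thue equation of unbounded degree. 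Congruence arguments will not dispose of it for every odd prime $n$. One needs non-elementary input, such as Baker-type lower bounds for linear forms in logarithms (archimedean, or $2$-adic via $3y^n\equiv1\pmod{2^m}$) to bound $n$, followed by a finite computation. The paper does not attempt any of this: its entire proof is the reference to \cite[Sec.~3]{BCM}. Your remaining bullets (the factorization when $3\mid m$, and the congruence modulo $2^m$) are not developed into an argument.

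Your fallback citation is also not usable as stated. The equations $x^2+2=3y^n$ and $2x^2+1=3y^n$ are not equivalent; putting $X=2x$ turns the second into $X^2+2=6y^n$. Neither equation has only the solution $x=y=1$ without restricting $n$: $2\cdot 11^2+1=3\cdot 3^4$, $5^2+2=3\cdot 3^2$, and $19^2+2=3\cdot 11^2$. If you rely on the literature, you must quote a precise theorem for odd prime exponents with $x$ a power of $2$, and the natural one is exactly \cite[Sec.~3]{BCM}. Once you do that, your elementary reductions remain correct but become superfluous, and the lemma rests on the citation just as in the paper.
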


\begin{proof}
    See \cite[Sec.3]{BCM}.
\end{proof}

We use Fermat's little theorem in its standard form: if $r$ is prime and $(a,r)=1$, then $a^{r-1}\equiv1\pmod r$.

\begin{theorem}[Mih\u{a}ilescu \cite{Mihailescu}]\label{Mihăilescu's Theorem}
If $x,y,a,b>1$ and $x^a-y^b=1$, then $(x,a,y,b)=(3,2,2,3)$.
\end{theorem}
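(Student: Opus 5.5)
This is Catalan's conjecture. The paper does not reprove it but invokes \cite{Mihailescu}, so in context the ``proof'' is a citation. If I had to plan a self-contained argument, I would follow the standard route to Mih\u{a}ilescu's proof. None of the earlier results of the paper, such as Proposition~\ref{3.1}, help beyond the trivial prime-power cases.

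\emph{First reduction.} If $x^a-y^b=1$ has a solution, then replacing $a,b$ by prime divisors $p\mid a$, $q\mid b$ gives a solution $X^p-Y^q=1$ with $X=x^{a/p}$ and $Y=y^{b/q}$. So it suffices to treat prime exponents $p,q$.

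\emph{Exponent $2$ cases.} These are classical.
\begin{enumerate}[$(i)$]
\item $b=2$, i.e.\ $x^p=y^2+1$: Lebesgue's argument shows there is no solution. One factors $y^2+1=(y+i)(y-i)$ in $\mathbb{Z}[i]$, notes that the two factors are coprime, and compares coefficients to reach a contradiction.
\item $a=2$, i.e.\ $x^2-1=y^q$ with $q$ an odd prime: Ko Chao's theorem shows there is no solution. This uses the factorization $(x-1)(x+1)$, whose factors are nearly coprime, together with a descent on Pell-type equations.
\item $q=2$ with exponent $3$: the only solution is $3^2-2^3=1$, obtained by Euler's elementary descent.
\end{enumerate}

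\emph{Odd primes $p\neq q$.} The plan is as follows.
\begin{enumerate}[$(1)$]
\item Cassels' theorem: $q\mid x$ and $p\mid y$. It comes from analysing $\gcd\bigl(x-1,\tfrac{x^p-1}{x-1}\bigr)\in\{1,p\}$.
\item Mih\u{a}ilescu's double Wieferich criterion: $p^{q-1}\equiv 1 \pmod{q^2}$ and $q^{p-1}\equiv1\pmod{p^2}$. One works in $\mathbb{Q}(\zeta_p)$ and shows that $(x-\zeta_p)/(1-\zeta_p)$ generates the $q$-th power of an ideal. The Stickelberger annihilator then yields a $q$-th power congruence.
\item The inequality $p<4q^2$ and $q<4p^2$, obtained from a Stickelberger/Bugeaud--Hanrot-type counting argument.
\item The decisive step: show $p\nmid h_p^{+}$ and more generally $q\nmid h_p^+$, via Thaine's theorem on annihilators of the plus part of the class group. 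Then an analysis of $q$-th powers of cyclotomic units in $\mathbb{Z}[\zeta_p]^{+}$, using the $\mathbb{F}_q[G]$-module structure of the unit group, gives $p<q$. By symmetry $q<p$, which is a contradiction.
\end{enumerate}

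I expect step~$(4)$ to be the main obstacle. The unit-module argument combined with Thaine's theorem is the deep part and cannot be shortcut. I would first try to present it as a black box, citing \cite{Mihailescu}, exactly as the paper does. The theorem is only used later for routine exclusions of prime-power equations.
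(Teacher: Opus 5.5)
Your operative plan, to state the result and cite \cite{Mihailescu}, is exactly what the paper does, and nothing more is needed there. However, the self-contained outline you attach has a genuine error in step~(4), so if you keep it, it must be repaired. Thaine's theorem does not show $p\nmid h_p^+$ or $q\nmid h_p^+$. The first is Vandiver's conjecture, still open. The second is not known in general and fails for some pairs: $3\mid h_{229}^+$, because $\mathbb{Q}(\sqrt{229})$ has class number $3$ and $229$ is totally ramified in $\mathbb{Q}(\zeta_{229})^+$. Mihăilescu's proof needs neither statement. Thaine's theorem transfers annihilators of the $q$-part of $E/C$ (units modulo cyclotomic units of $\mathbb{Q}(\zeta_p)^+$) to the $q$-part of its class group. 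This is what lets Mihăilescu avoid any hypothesis on $h_p^+$, but it requires $q\nmid[\mathbb{Q}(\zeta_p)^+:\mathbb{Q}]=(p-1)/2$. Consequently the plus-part argument yields only a conditional statement: if $p\not\equiv1\pmod q$, then $p<q$.

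The symmetry $(x,y,p,q)\mapsto(-y,-x,q,p)$ therefore does not give $q<p$. It gives only that the larger exponent is $\equiv1$ modulo the smaller. This is where your steps~(2) and~(3), which your outline lists but never uses, are needed:
\begin{enumerate}[$(a)$]
\item If $p>q$, the conditional statement gives $p\equiv1\pmod q$.
\item The congruence $p^{q-1}\equiv1\pmod{q^2}$ lifts this to $p\equiv1\pmod{q^2}$.
\item Together with $p<4q^2$ and parity, this forces $p=2q^2+1$.
\item This number is divisible by $3$ unless $q=3$.
\item The remaining pair $(p,q)=(19,3)$ violates $q^{p-1}\equiv1\pmod{p^2}$, since $3^{18}\equiv343\pmod{361}$.
\end{enumerate}
There is also a smaller slip in~(ii). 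Ko Chao's theorem excludes $x^2-y^q=1$ only for primes $q\geq5$. For $q=3$ there is the solution $3^2-2^3=1$, which is your case~(iii), mislabelled there as ``$q=2$''.
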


\begin{cor}\label{Corollary 11}
Let $q$ be a power of the prime $p$. Then
\[
 |\pi(p(q^2-1))|\leq 3
\]
if and only if
\[
 q\in\{2,3,4,5,7,8,9,17\}.
\]
\end{cor}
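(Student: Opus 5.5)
We need to determine all prime powers $q=p^k$ with $|\pi(p(q^2-1))|\le 3$, i.e. $|\pi(q^2-1)\setminus\{p\}|\le 2$. Since $p\nmid q^2-1$, this is the same as $|\pi(q^2-1)|\le 2$. The ``if'' direction is a direct check:
\[
3=3,\ 8=2^3,\ 15=3\cdot5,\ 24=2^3\cdot3,\ 48=2^4\cdot3,\ 63=3^2\cdot7,\ 80=2^4\cdot5,\ 288=2^5\cdot3^2.
\]
In each case at most two primes occur, and adding $p$ gives at most three. For the converse, assume $|\pi(q^2-1)|\le2$ and split by $p$.

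\emph{Case $p=2$.} Here $q-1$ and $q+1$ are odd and coprime. If $q\ge 4$, both exceed $1$, so each is a prime power. By Lemma \ref{lemma 3}, $3$ divides one of them, so that one is a power of $3$.
\begin{itemize}
\item[] If $q+1=3^b$, then $2^k+1=3^b$. Theorem \ref{Mihăilescu's Theorem}, together with the trivial cases $k=1$ or $b=1$, gives $q\in\{2,8\}$. Indeed $q=8$ works, since $q-1=7$ is prime.
\item[] If $q-1=3^b$, then Proposition \ref{3.1} (or Mihăilescu) gives $q=4$.
\end{itemize}
Adding $q=2$ gives $\{2,4,8\}$.

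\emph{Case $p=3$.} Now $q^2-1=(q-1)(q+1)$ has no factor $3$. Moreover $\frac{q-1}{2}$ and $\frac{q+1}{2}$ are coprime by Lemma \ref{lemma 9}, so one of $q\pm1$ is a power of $2$ and the other is $2^a r^c$ for an odd prime $r$.
\begin{itemize}
\item[] If $q-1=2^a$, then $3^k-1=2^a$, and Proposition \ref{3.1} gives $k\le 2$, so $q\in\{3,9\}$.
\item[] If $q+1=2^a$, then $3^k+1=2^a$ forces $k=1$ by Mihăilescu, so $q=3$.
\end{itemize}
Hence $q\in\{3,9\}$.

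\emph{Case $p\ge5$.} By Lemma \ref{lemma 4}, $24\mid q^2-1$, so $\pi(q^2-1)=\{2,3\}$ exactly. Since $(q-1,q+1)=2$, one of $q\pm1$ is $2^a$ and the other is $2\cdot 3^b$ or $2^a3^b$; more precisely, $\{q-1,q+1\}=\{2^a,\,2\cdot3^b\}$ up to the distribution of powers of $2$.
\begin{itemize}
\item[] If $q\pm1=2^a$, Proposition \ref{3.1} and Mihăilescu force $k=1$, with $p$ a Fermat or Mersenne prime. The remaining factor $q\mp1$ must then have the form $2\cdot 3^b$ or $4\cdot3^b$.
\item[] Alternatively, if $q-1=2^n\cdot3$, Proposition \ref{3.2} gives $q\in\{5,7\}$ or $q$ prime.
\end{itemize}
Work with $p$ prime. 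Suppose $p=2^a+1$, so $p-1=2^a$ and $p+1=2(2^{a-1}+1)$ must be $2\cdot3^b$. Then $2^{a-1}+1=3^b$, and Mihăilescu gives $a-1\in\{1,3\}$, so $p\in\{5,17\}$; the value $a-1=0$ gives $p=3$, which is excluded here. Suppose instead $p=2^a-1$. Then $p-1=2(2^{a-1}-1)$ and $2^{a-1}-1=3^b$, giving $a-1=2$, so $p=7$.

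The main obstacle is the bookkeeping in this last case, in particular dismissing prime powers with $k\ge2$. For these, use Mihăilescu directly: $p^k\pm1=2^a$ has no solutions with $k\ge2$ except $3^2-1=8$, which belongs to $p=3$. This yields $\{5,7,17\}$. Combining all three cases gives exactly $\{2,3,4,5,7,8,9,17\}$.
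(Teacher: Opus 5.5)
Your proof is correct and follows essentially the same route as the paper. Both split into $p=2$, $p=3$, $p\ge5$, use coprimality of $q\pm1$ (or of $\frac{q\pm1}{2}$) to force one factor to be a power of $3$ (resp.\ $2$), and then solve the resulting equations with Proposition~\ref{3.1} and Mih\u{a}ilescu's theorem.

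The only blemish is the unused aside citing Proposition~\ref{3.2}, which you misquote: for $q=p^k$ it gives $k=1$ or $q\in\{25,49\}$, not $q\in\{5,7\}$. You already eliminate $k\ge2$ through the factor of $q\pm1$ that is a power of $2$, so that bullet can simply be deleted.
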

\begin{proof}
Since $p\nmid q^2-1$, the hypothesis says that $q^2-1=(q-1)(q+1)$ involves at most two primes distinct from $p$.

Suppose first that $p=2$. The two odd integers $q-1$ and $q+1$ have greatest common divisor $1$. Moreover, one of them is divisible by $3$. Hence, if $|\pi(2(q^2-1))|\leq3$, the factor divisible by $3$ is a power of $3$ and the other factor is a power of one odd prime. If $q=2^t>2$, the equations $2^t-1=3^a$ and $2^t+1=3^a$ are settled by Proposition~\ref{3.1} and Mih\u{a}ilescu's theorem: the first gives $t=2$, while the second gives $t=3$. Thus $q\in\{2,4,8\}$.

Now let $p=3$. Write $q=3^t$. For $t>1$, the coprime consecutive integers
\[
 \frac{q-1}{2},\qquad \frac{q+1}{2}
\]
must, under the hypothesis, be powers of $2$ and of one further odd prime. Since $3\nmid q^2-1$, that odd prime cannot be $3$. If $(q-1)/2$ is a power of $2$, then $3^t-2^{a+1}=1$; Mih\u{a}ilescu's theorem gives $t=2$. If $(q+1)/2$ is a power of $2$, then $2^{a+1}-3^t=1$, and Mih\u{a}ilescu's theorem gives no solution with $t>1$. Hence $q\in\{3,9\}$.

Finally, suppose that $p>3$. By Lemma~\ref{lemma 4}, the primes $2$ and $3$ divide $q^2-1$. Thus the only primes that may divide $p(q^2-1)$ are $p,2,3$. Since
\[
 \left(\frac{q-1}{2},\frac{q+1}{2}\right)=1,
\]
the two consecutive integers $(q-1)/2$ and $(q+1)/2$ are, apart from the possibility that one is $1$, powers of $2$ and $3$ in some order. Proposition~\ref{3.1} and Mih\u{a}ilescu's theorem give only
\[
 (q,p)=(5,5),(7,7),(17,17).
\]
Substitution shows that all eight displayed values of $q$ do satisfy the required inequality.
\end{proof}

\begin{cor} \label{Corollary 12}
Let $q$ be a power of the prime $p$. If $|\pi(p(q^3-1))|\leq3$, then either
\[
 q\in\{2,4,8\},
\]
or $q$ is an odd prime such that
\[
 q-1=2^a\qquad\text{and}\qquad q^2+q+1=r^b
\]
for some positive integers $a,b$ and some odd prime $r$.
\end{cor}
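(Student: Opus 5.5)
The plan is to factor $q^3-1=(q-1)(q^2+q+1)$ and use that $(q-1,\,q^2+q+1)$ divides $3$, since $q^2+q+1=(q-1)(q+2)+3$. Because $p\nmid q^3-1$, the hypothesis says that $q^3-1$ involves at most two primes different from $p$. I split according to whether $p=2$ or $p$ is odd.

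\emph{Case $p$ odd.} Here $q-1$ is even and $q^2+q+1$ is odd and greater than $1$, so $2$ and at least one odd prime $r\neq p$ already occur. Hence at most one odd prime divides $q^3-1$.

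First suppose $3\mid q^2+q+1$, so that $q\equiv 1\pmod 3$. Writing $q=1+3k$ gives $q^2+q+1=3+9k+9k^2\equiv 3\pmod 9$. Since $q^2+q+1>3$, it has a prime factor $r\neq 3$. Together with $3\mid q-1$, this puts $p,2,3,r$ in $\pi(p(q^3-1))$, which is a contradiction.

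So $3\nmid q^2+q+1$, and therefore $(q-1,q^2+q+1)=1$. Every odd prime of $q-1$ would then be a fourth prime. Hence $q-1=2^a$, and $q^2+q+1=r^b$ for a single odd prime $r$.

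Apply Proposition~\ref{3.1} to $p^f-1=2^a$. This gives either $f=1$ with $q$ a Fermat prime, or $q=9$. The value $q=9$ is excluded because $9^2+9+1=91=7\cdot 13$, which would give four primes $3,2,7,13$. Thus $q$ is an odd prime of the stated form.

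\emph{Case $p=2$, $q=2^t$.} Here $q^3-1=2^{3t}-1$ may have at most two distinct prime divisors. I would invoke Zsigmondy's theorem, which is not stated in the paper but is standard. For every divisor $d>1$ of $3t$ with $d\neq 6$, $2^d-1$ has a primitive prime divisor. These primes are pairwise distinct and all divide $2^{3t}-1$. Hence
\[
\tau(3t)-1-\epsilon\leq 2,
\]
where $\tau$ counts divisors and $\epsilon=1$ if $6\mid 3t$ and $\epsilon=0$ otherwise.

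If $6\nmid 3t$, then $\tau(3t)\leq 3$, so $3t$ is $3$ or $9$, giving $q\in\{2,8\}$. If $6\mid 3t$, then $\tau(3t)\leq 4$, forcing $3t=6$ and $q=4$.

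A direct check confirms the three values:
\begin{itemize}
\item $q=2$: the primes are $2,7$.
\item $q=4$: $4^3-1=63=3^2\cdot 7$, so the primes are $2,3,7$.
\item $q=8$: $8^3-1=511=7\cdot 73$, so the primes are $2,7,73$.
\end{itemize}

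The main obstacle is the $p=2$ case, because the two coprime factors cannot be pinned down with Proposition~\ref{3.1} alone. If one wished to avoid Zsigmondy, one could argue directly instead. For odd $t>3$ with $3\nmid t$, we have $7\mid q^2+q+1$. If moreover $q-1>1$, the factor $q^2+q+1$ must then be a power of $7$, and Mih\u{a}ilescu's theorem together with congruences would rule this out. The subcases $3\mid t$ and $t$ even would be handled analogously. I expect this direct route to be lengthier, so I would use Zsigmondy's theorem.
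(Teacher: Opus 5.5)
Your proof is correct and follows essentially the same route as the paper: Zsigmondy's theorem in characteristic $2$, and for odd $p$ the coprimality of $q-1$ and $q^2+q+1$, which forces $q-1=2^a$ and $q^2+q+1=r^b$, with $q=9$ ruled out by $91=7\cdot 13$. The differences are cosmetic. You count all primitive prime divisors via $\tau(3t)$, whereas the paper picks the exponents $3,f,3f$ (or $2,3,18$ when $f=6$). You also invoke Proposition~\ref{3.1} rather than Mih\u{a}ilescu's theorem to force $f=1$.
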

\begin{proof}
Write $q=p^f$. Suppose first that $p=2$. If $f\geq4$ and $f\neq6$, Zsigmondy's theorem gives primitive prime divisors of $2^3-1$, $2^f-1$, and $2^{3f}-1$. These three primes are distinct, since the corresponding multiplicative orders of $2$ are $3,f,$ and $3f$. Hence $2^{3f}-1$ has at least three distinct odd prime divisors. If $f=6$, use instead primitive divisors for the exponents $2,3,$ and $18$. Thus $|\pi(2(2^{3f}-1))|\geq4$ whenever $f\geq4$, and consequently $q\in\{2,4,8\}$.

Now assume that $p$ is odd. Since
\[
 p(q^3-1)=p(q-1)(q^2+q+1),\qquad
 (q-1,q^2+q+1)=(q-1,3),
\]
and $p$ divides neither $q-1$ nor $q^2+q+1$, the two latter factors together involve at most two distinct primes. If $3\mid q-1$, then
\[
 v_3(q^2+q+1)=1.
\]
As $2\mid q-1$ and $q^2+q+1>3$, this would force at least three distinct primes in $(q-1)(q^2+q+1)$, a contradiction. Hence
\[
 (q-1,q^2+q+1)=1.
\]
It follows that
\[
 q-1=2^a,\qquad q^2+q+1=r^b
\]
for some odd prime $r$.

It remains to show that $f=1$. If $f>1$, then $p^f-2^a=1$. Mih\u{a}ilescu's theorem gives the unique possibility $p^f=9$ and $2^a=8$. But for $q=9$ we have
\[
 q^2+q+1=91=7\cdot13,
\]
contrary to the preceding conclusion. Thus $f=1$, and $q$ is an odd prime.
\end{proof}

\begin{cor}\label{Corollary 13}
Let $q$ be a power of the prime $p$. If $|\pi(p(q^4-1))|\leq3$, then $q\in\{2,3\}$.
\end{cor}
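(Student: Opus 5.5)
The plan is to reduce to Corollary~\ref{Corollary 11} and then check a finite list of values by hand. No new number theory should be needed.

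\textbf{Reduction.} Since
\[
 q^4-1=(q^2-1)(q^2+1),
\]
every prime divisor of $p(q^2-1)$ also divides $p(q^4-1)$. So the hypothesis $|\pi(p(q^4-1))|\leq3$ gives $|\pi(p(q^2-1))|\leq3$. Corollary~\ref{Corollary 11} then restricts $q$ to
\[
 q\in\{2,3,4,5,7,8,9,17\}.
\]

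\textbf{Checking the candidates.} I then factor $p(q^4-1)$ for each of these eight values:
\begin{itemize}
\item $q=2$: $2\cdot15$ has prime set $\{2,3,5\}$, so $q=2$ survives.
\item $q=3$: $3\cdot80$ has prime set $\{2,3,5\}$, so $q=3$ survives.
\item $q=4$: $q^4-1=255=3\cdot5\cdot17$, so together with $p=2$ there are four primes.
\item $q=5$: $624=2^4\cdot3\cdot13$, so together with $p=5$ there are four primes.
\item $q=7$: $2400=2^5\cdot3\cdot5^2$, so together with $p=7$ there are four primes.
\item $q=8$: $4095=3^2\cdot5\cdot7\cdot13$, already four odd primes.
\item $q=9$: $6560=2^5\cdot5\cdot41$, so together with $p=3$ there are four primes.
\item $q=17$: $83520=2^6\cdot3^2\cdot5\cdot29$, so together with $p=17$ there are five primes.
\end{itemize}
Every value other than $2$ and $3$ gives at least four primes, so $q\in\{2,3\}$.

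\textbf{Main obstacle.} The only genuine input is Corollary~\ref{Corollary 11}, which is already established. The rest is arithmetic, and the one thing to be careful about is that $p$ itself is counted among the primes. That is exactly what eliminates $q=5,7,9$.
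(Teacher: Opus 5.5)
Your proposal is correct and matches the paper's proof: you use $q^2-1\mid q^4-1$ to reduce via Corollary~\ref{Corollary 11} to $q\in\{2,3,4,5,7,8,9,17\}$, then factor $p(q^4-1)$ for these eight values. Your factorizations are all accurate, and you are right that counting $p$ itself is what eliminates $q=5,7,9$.
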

\begin{proof}
Since $p(q^2-1)$ divides $p(q^4-1)$, Corollary~\ref{Corollary 11} leaves only
$q\in\{2,3,4,5,7,8,9,17\}$. Factoring $p(q^4-1)$ for these eight values leaves only $q=2$ and $q=3$.
\end{proof}

\begin{cor}\label{Corollary 14}
Let $q$ be a power of the prime $p$. If $|\pi(p(q^6-1))|\leq3$, then $q=2$.
\end{cor}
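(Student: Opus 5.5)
Since $q^3-1$ and $q^2-1$ both divide $q^6-1$, we have $p(q^3-1)\mid p(q^6-1)$ and $p(q^2-1)\mid p(q^6-1)$. So the hypothesis $|\pi(p(q^6-1))|\leq3$ implies the hypotheses of both Corollary~\ref{Corollary 11} and Corollary~\ref{Corollary 12}. We intersect their conclusions and then finish by direct factorization.

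Corollary~\ref{Corollary 11} gives $q\in\{2,3,4,5,7,8,9,17\}$. Corollary~\ref{Corollary 12} allows either $q\in\{2,4,8\}$, or $q$ an odd prime with $q-1=2^a$ and $q^2+q+1=r^b$ for an odd prime $r$.
\begin{itemize}
\item Among $\{3,5,7,9,17\}$, the values $7$ and $9$ fail $q-1=2^a$.
\item For $q=3$: $3^2+3+1=13$, which is allowed.
\item For $q=5$: $5^2+5+1=31$, which is allowed.
\item For $q=17$: $17^2+17+1=307$, which is prime, so it is also allowed.
\end{itemize}
The candidates are therefore $q\in\{2,3,4,5,8,17\}$.

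To rule out everything except $q=2$, we use $q^6-1=(q-1)(q+1)(q^2+q+1)(q^2-q+1)$ and exhibit four distinct primes in $p(q^6-1)$ in each remaining case:
\begin{itemize}
\item $q=3$: $p(q^6-1)=3\cdot728=3\cdot 2^3\cdot7\cdot13$, giving the primes $2,3,7,13$.
\item $q=4$: $4^6-1=4095=3^2\cdot5\cdot7\cdot13$; with $p=2$ this gives five primes.
\item $q=5$: $5^6-1=15624=2^3\cdot3^2\cdot7\cdot31$; with $p=5$ this gives five primes.
\item $q=8$: $8^6-1=2^{18}-1=3^3\cdot7\cdot19\cdot73$; with $p=2$ this gives five primes.
\item $q=17$: $p(q^6-1)$ contains $17$, $2$, $3$ and $307$.
\end{itemize}
Finally, for $q=2$ we have $2(2^6-1)=2\cdot3^2\cdot7$, which involves only three primes. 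The corollary gives only the necessary direction, but this shows $q=2$ genuinely occurs, so the conclusion is sharp.

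There is no real obstacle here. The only point needing care is applying Corollary~\ref{Corollary 12} correctly to prune the odd candidates: the key fact is that $q^2+q+1$ must be a prime power. Even without this pruning, each candidate from Corollary~\ref{Corollary 11} can be eliminated by direct factorization. For $q=7$: $7^6-1=2^4\cdot3^2\cdot19\cdot43$. For $q=9$: $9^6-1=(3^6-1)(3^6+1)=2^4\cdot 5\cdot7\cdot13\cdot73$. So the argument is robust either way.
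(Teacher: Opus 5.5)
Your argument is correct and is essentially the paper's own: reduce via Corollary~\ref{Corollary 11} to $q\in\{2,3,4,5,7,8,9,17\}$ and then factor $p(q^6-1)$ directly, with the Corollary~\ref{Corollary 12} pruning serving only as an optional shortcut. There is one small slip. Since $9-1=8=2^3$, the value $q=9$ does \emph{not} fail the condition $q-1=2^a$. It is excluded instead because Corollary~\ref{Corollary 12} requires $q$ to be prime (equivalently, $9^2+9+1=91=7\cdot 13$ is not a prime power). Your direct factorization $9^6-1=2^4\cdot5\cdot7\cdot13\cdot73$ also disposes of it, so the conclusion is unaffected.
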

\begin{proof}
Again $p(q^2-1)$ divides $p(q^6-1)$, so Corollary~\ref{Corollary 11} reduces the problem to the eight values
$q\in\{2,3,4,5,7,8,9,17\}$. Direct factorization gives $|\pi(p(q^6-1))|\leq3$ only for $q=2$.
\end{proof}

\begin{cor}\label{Corollary 15}
Let $q$ be a power of the prime $p$. Then $p(q^{12}-1)$ is divisible by each of
\[
 2,3,5,7,13.
\]
In particular, $|\pi(p(q^{12}-1))|\geq5$.
\end{cor}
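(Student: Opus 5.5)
The plan is to treat each of the five primes $r\in\{2,3,5,7,13\}$ separately and show that $r$ divides $p(q^{12}-1)$, splitting according to whether $r=p$ or $r\neq p$. The key observation is that for every such $r$ the number $r-1$ lies in $\{1,2,4,6,12\}$, so $r-1$ divides $12$.

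First, if $r=p$, then $r$ divides the factor $p$ and there is nothing to prove. Second, if $r\neq p$, then $(q,r)=1$ because $q$ is a power of $p$. By Fermat's little theorem in the form stated above, $q^{r-1}\equiv1\pmod r$. Since $(r-1)\mid 12$, raising this congruence to the power $12/(r-1)$ gives $q^{12}\equiv1\pmod r$. Hence $r\mid q^{12}-1$.

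The case $r=2$ is included in this argument (there $q^{1}\equiv1\pmod 2$ simply says that $q$ is odd). It is also immediate directly: either $p=2$, or $q$ is odd and so $q^{12}-1$ is even. Once each of the five primes is shown to divide $p(q^{12}-1)$, the bound $|\pi(p(q^{12}-1))|\geq5$ follows at once, because $2,3,5,7,13$ are distinct.

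There is no real obstacle here. The only point needing care is to note that the divisibility $(r-1)\mid12$ holds for each of the five listed primes, and to handle the case $r=p$, where Fermat's theorem does not apply, through the extra factor $p$.
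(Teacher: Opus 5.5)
Your proposal is correct and follows the paper's proof: split on $r=p$ versus $r\neq p$, and in the second case apply Fermat's little theorem together with $(r-1)\mid 12$. The only cosmetic difference is that the paper handles $r=2$ by the direct parity argument rather than folding it into the Fermat step, and you note that argument as well.
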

\begin{proof}
Let $r\in\{2,3,5,7,13\}$. If $r=p$, then $r\mid p(q^{12}-1)$. If $r\neq p$, then $(q,r)=1$. For $r=2$, if $p=2$ then $2\mid p(q^{12}-1)$, while if $p\neq2$ then $q$ is odd and $2\mid q^{12}-1$. For $r\in\{3,5,7,13\}$, Fermat's little theorem gives $q^{r-1}\equiv1\pmod r$, and $r-1$ divides $12$. Hence $r\mid q^{12}-1$. Thus all five displayed primes divide $p(q^{12}-1)$.
\end{proof}

\begin{lem} \label{PSL} 
Let $G$ be the simple group of type $\PSL_n(q)$ such that the order of every element is divisible by at most three distinct primes, where $n\geq2$ and $q$ is a power of a prime $p$. The group $G$ must be one of the following:
\begin{enumerate}[$(1)$]
\item $\PSL_{2}(q)$. If $p=2$, then $q\in B_1$, otherwise $q\in B_2$.
\item $\PSL_{3}(q)$, $q\in B_3$. 
\item $\PSL_{4}(3),\PSL_{4}(4),\PSL_{4}(5),\PSL_{4}(7),\PSL_{4}(8),\PSL_{4}(9),\\\PSL_{4}(17)$.
\item $\PSL_{5}(2),\PSL_{5}(3),\PSL_{5}(4),\PSL_{5}(5)$.
\item $\PSL_{6}(2),\PSL_{6}(3),\PSL_{6}(4)$.
\item $\PSL_{7}(2),\PSL_{7}(3)$.
\item $\PSL_{8}(2)$.
\end{enumerate}
\end{lem}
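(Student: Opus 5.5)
We argue by exhibiting, for $n$ and $q$ large, explicit elements of $G=\PSL_n(q)$ whose orders involve at least four distinct primes. Conversely, for the surviving small cases we read off the spectrum from Buturlakin's description of $\omega(\PSL_n(q))$ \cite{Buturlakin1}. The basic tool is a block-diagonal construction. If $n=n_1+n_2+\dots$, then $\mathrm{SL}_n(q)$ contains $\mathrm{SL}_{n_1}(q)\times \mathrm{GL}_{n_2}(q)\times\cdots$ (with the determinant balanced in one block). Hence $\omega(G)$ contains, up to division by $(n,q-1)$, numbers such as $[p\cdot\text{unipotent order},\, q^{n_2}-1,\dots]$, and in particular the orders $(q^{n}-1)/((q-1)(n,q-1))$, $q^{n-1}-1$, and $p\cdot(q^{n-2}-1)$ and similar products. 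The key step is to show that for large $n$ some element order is divisible by $p(q^{k}-1)$ with $k\in\{4,6,12\}$ up to a bounded factor. Corollaries~\ref{Corollary 13}, \ref{Corollary 14} and \ref{Corollary 15} then cut $q$ down to $\{2,3\}$, $\{2\}$, or nothing.

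The case analysis goes as follows.

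\emph{Large $n$.} For $n\geq 14$, a unipotent element of order $p$ in a $2\times2$ block commutes with a semisimple element of order $q^{12}-1$ in $\mathrm{GL}_{12}(q)$. After adjusting the determinant and passing to the quotient by the centre, this gives an element whose order is divisible by at least four of the primes in Corollary~\ref{Corollary 15}. Since the central quotient only removes primes dividing $(n,q-1)$, we need care; but the primes $5,7,13$ survive unless they divide $q-1$, and in that case we use a different block. So $n\leq 13$.

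\emph{Middle range $9\leq n\leq 13$.} Here we use blocks of sizes $2,4,\dots$ and Zsigmondy primes for $q^4-1$, $q^3-1$, $q^5-1$, etc., to obtain four distinct primes for every $q$, including $q=2$.

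\emph{Small $n$, $4\leq n\leq 8$.} Combining a unipotent block with a torus $\mathrm{GL}_2(q)$ gives element orders divisible by $p(q^2-1)/(n,q-1)$. Corollary~\ref{Corollary 11} then restricts $q$ to $\{2,3,4,5,7,8,9,17\}$. For $n\geq5$ we use $p(q^3-1)$ or $p(q^4-1)$ via Corollaries~\ref{Corollary 12} and \ref{Corollary 13}, and for $n\geq 7$ we use $p(q^6-1)$ via Corollary~\ref{Corollary 14}. We then check the finitely many remaining groups against Buturlakin's spectrum, keeping exactly those in items (3)–(7).

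\emph{$n=2,3$.} For $n=2$ the maximal element orders of $\PSL_2(q)$ are $p$, $(q\pm1)/(2,q-1)$, which gives exactly $B_1$ and $B_2$. For $n=3$ the maximal orders are $(q^2+q+1)/(3,q-1)$, $q^2-1$ and $p(q-1)/(3,q-1)$, which gives $B_3$.

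The main obstacle is twofold. First, there is the bookkeeping of the central quotient $(n,q-1)$ and of determinant conditions, which can kill a prime in the constructed element order. Second, we must rule out the sporadic exceptions in the middle range: $q=2$, and values like $q=3,4$ near the thresholds $n=7,8,9$. There the Zsigmondy-type argument is weakest, so we plan to verify those groups directly from the explicit spectrum, checking for instance that $\PSL_9(2)$ contains an element of order $[2\cdot 3,7,31]\cdot$-type divisible by $2,3,7,31$, and that $\PSL_8(3)$ and $\PSL_6(5)$ have elements of width $4$.
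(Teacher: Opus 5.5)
Your overall strategy matches the paper's: explicit block-diagonal elements feed Corollaries~\ref{Corollary 11}--\ref{Corollary 15}, followed by a finite check against Buturlakin's spectrum. The problem is the dimension and determinant bookkeeping you postpone. That is exactly where the proposal breaks, and it changes the answer.

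\textbf{The element-order thresholds are wrong.} Consider an element of $\PSL_n(q)$ whose order is divisible by $p$ and by a primitive prime divisor of $q^k-1$. It needs an irreducible $k$-dimensional semisimple block, plus a separate block of dimension at least $2$ carrying the unipotent part, so $n\geq k+2$. At $n=k+2$ the determinant condition and the centre only allow the order $p(q^k-1)/(n,q-1)$. The full order $p(q^k-1)$ needs one more $1$-dimensional block to balance the determinant, i.e.\ $n\geq k+3$.
\begin{itemize}
\item Your claim that $p(q^6-1)\in\omega(\PSL_n(q))$ for $n\geq7$ is therefore false. For $n=7$ no element has order divisible by $p$ and a primitive divisor of $q^6-1$ at all. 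For $n=8$ one only gets $p(q^6-1)/(8,q-1)$.
\item This is not cosmetic. Applying Corollary~\ref{Corollary 14} at $n=7$ would eliminate $\PSL_7(3)$, which genuinely satisfies the hypothesis. By Buturlakin's description, every element order of $\PSL_7(3)=\mathrm{SL}_7(3)$ divides one of $1093, 728, 968, 1040, 726, 312, 240, 234, 72$, each with at most three prime divisors.
\item In particular $3\cdot728=2184$ is not an element order, since $7$ is a primitive divisor of $3^6-1$. So your plan cannot end with ``keeping exactly items (3)--(7)''.
\item The same off-by-one affects ``$p(q^4-1)$ for $n\geq5$''. At $n=5$ no such element exists. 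At $n=6$ only $p(q^4-1)/(6,q-1)$ occurs: $\PSL_6(4)$, which is on the list, has elements of order $170$ but none of order $510=2\cdot3\cdot5\cdot17$.
\end{itemize}
The usable undivided orders are $p(q^2-1)$ from $n=5$, $p(q^3-1)$ from $n=6$, $p(q^4-1)$ from $n=7$, and $p(q^6-1)$ only from $n=9$. The paper's case split uses exactly such orders.

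\textbf{Two further steps need repair.}
\begin{itemize}
\item For $n=14$, the blocks $2+12$ leave no room to balance the determinant. Your fallback (``four of $2,3,5,7,13$ survive, otherwise use a different block'') is unspecified and fails as stated: for $q\equiv1\pmod{5\cdot7\cdot13}$, none of $5,7,13$ divides $(q^{12}-1)/(q-1)$.
\item Your sample witness for $\PSL_9(2)$ does not exist. The primes $31$, $7$ and $3$ need irreducible blocks of dimensions $5,3,2$, already $10>9$ dimensions before adding any unipotent part. The correct witness is $210=2\cdot3\cdot5\cdot7$, from blocks $2+3+4$.
\end{itemize}

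\textbf{A uniform fix for all $n\geq9$.} Buturlakin's item with blocks $2+3+4(+\,n-9)$ gives an element order divisible by $p[q^3-1,q^4-1]$. This has four distinct prime divisors:
\begin{itemize}
\item $p$;
\item a primitive divisor of $q^3-1$;
\item a primitive divisor of $q^4-1$;
\item $2$ if $q$ is odd, a prime divisor of $q-1$ if $q\geq4$ is even, or $3$ if $q=2$.
\end{itemize}

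\textbf{Handling the bookkeeping in general.} If your element has a pure unipotent block $J_2$ as a direct summand, any scalar power of it must be trivial. Hence its order is not reduced modulo the centre, and the only real constraint is the determinant, which costs one extra dimension. Stating this once would settle the bookkeeping you flagged and give the correct thresholds above.
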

\begin{proof}
The spectra of $\PSL_n(q)$ are described in \cite[Corollary~3]{Buturlakin1}.
Let $n=2$, then $\omega(\PSL_2(q))=\{2,\text{all divisors of}\ q\pm1\}$ when $p=2$, and $\omega(\PSL_2(q))=\{p,\text{all divisors of}\ \frac{q\pm1}{2}\}$ when $p>2$. Let $n=3$, then $\{\text{all divisors of}\ \frac{q^2+q+1}{(3,q-1)},q^2-1,\ \text{and}\ \frac{p(q-1)}{(3,q-1)}\}\subseteq\omega(\PSL_3(q))$. These descriptions give exactly the conditions defining $B_1$, $B_2$, and $B_3$. We leave these sets in arithmetic form; determining all parameters that satisfy them is a separate number-theoretic problem.

If $n\geq15$, then $p[q^{12}-1,q^{n-14}-1]\in\omega(\PSL_n(q))$. This order is divisible by $p(q^{12}-1)$, which has at least five distinct prime divisors by Corollary~\ref{Corollary 15}. Hence no group in this range satisfies the hypothesis. 

If $9\le n\le14$, then $p(q^6-1)\in\omega(\PSL_n(q))$. Corollary~\ref{Corollary 14} therefore forces $q=2$. For $q=2$ and $n\ge9$, use the block decomposition $2+3+4+(n-9)$: a $2$-dimensional unipotent block together with irreducible semisimple blocks of dimensions $3$ and $4$ yields an element of order $2\,\operatorname{lcm}(2^3-1,2^4-1)=2\cdot3\cdot5\cdot7$. Thus no group in this range satisfies the condition. 

If $n=8$, we have $\{\frac{p(q^6-1)}{(8,q-1)}, p(q^4-1), \frac{(q^8-1)}{(8,q-1)(q-1)}, p(q+1)(q^3-1),p(q^5-1)\}\subseteq \omega(\mathrm{PSL}_8(q))$.Corollary~\ref{Corollary 14} leaves $q=2$ as the only possible parameter in this branch; hence $\PSL_8(2)$ remains as a candidate.

If $n=6$ or $7$, then $p(q^3-1)\in \omega(\mathrm{PSL}_n(q))$. Corollary~\ref{Corollary 12} leaves $q\in\{2,4,8\}$ or the odd-prime branch described there. In the latter branch, the natural $\PSL_4(q)$ section is subject to the $n=4$ calculation below, which uses only Corollary~\ref{Corollary 11}; hence only $q=3,5,17$ need be considered. A direct check with \cite[Corollary 3]{Buturlakin1} for the resulting finite set $q\in\{2,3,4,5,8,17\}$ leaves $\PSL_{6}(2),\PSL_{6}(3),\PSL_{6}(4),\PSL_{7}(2),\PSL_{7}(3)$ as the possible groups in these two ranks.

If $n=5$, we have $\{ \frac{q^5-1}{(5,q-1)(q-1)}, \frac{(q^3-1)(q+1)}{(5,q-1)}, \frac{p(q^3-1)}{(5,q-1)},p(q^2-1)\}\subseteq \omega(\PSL_5(q))$,  by Corollary~\ref{Corollary 11} and a direct verification, the only remaining possible groups are $\PSL_{5}(2),\PSL_{5}(3),\PSL_{5}(4),\PSL_{5}(5)$.

If $n=4$, then $\{\frac{q^4-1}{(4,q-1)(q-1)}, \frac{q^3-1}{(4,q-1)}, \frac{p(q^2-1)}{(4,q-1)}\} \subseteq \omega(\PSL_4(q)) $. To handle $\frac{p(q^2-1)}{(4,q-1)}$, suppose first that $p=2$, $(4,q-1)=1$, and the claim follows by a direct calculation using Corollary~\ref{Corollary 11}. If $p\neq 2$ and $(4,q-1)=2$, then $p(q^2-1)=p(q+1)(q-1)=2^2\cdot p\cdot \frac{q+1}{2}\cdot \frac{q-1}{2}$, so the prime factor $2$ still occurs. If $(4,q-1)=4$, then $\frac{p(q^2-1)}{(4,q-1)}=p\cdot \frac{q+1}{2}\cdot \frac{q-1}{2}$, and at least one of $\frac{q+1}{2}$ or $\frac{q-1}{2}$ is even, so the prime divisor $2$ again appears. Therefore, by Corollary~\ref{Corollary 11} and the remaining elementary parameter check, the only possible groups are $\PSL_{4}(3),\PSL_{4}(4),\PSL_{4}(5),\PSL_{4}(7),\PSL_{4}(8),\PSL_{4}(9),\\\PSL_{4}(17)$.

\end{proof}

\begin{lem} \label{PSU} 
Let $G\cong\PSU_n(q)$ be simple, where $n\geq2$ and $q$ is a power of a prime $p$. If every element order of $G$ is divisible by at most three distinct primes, then $G$ is one of the following:
\begin{enumerate}[$(1)$]
\item $\PSU_{2}(q)$. If $p=2$, then $q\in B_1$, otherwise $q\in B_2$.
\item $\PSU_{3}(q)$, $q\in B_4$.
\item $\PSU_{4}(3), \PSU_{4}(4),\PSU_{4}(5),\PSU_{4}(7),\PSU_{4}(8),\PSU_{4}(9),\\\PSU_{4}(17)$.
\item $\PSU_{5}(2), \PSU_{5}(3), \PSU_{5}(4),\PSU_{5}(7),\PSU_{5}(9)$.
\item $\PSU_{6}(2), \PSU_{6}(3)$.
\item $\PSU_{7}(2), \PSU_{7}(3)$.
\item $\PSU_{8}(2)$.
\item $\PSU_{9}(2)$.
\end{enumerate}
\end{lem}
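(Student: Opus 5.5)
The plan mirrors the proof of Lemma~\ref{PSL}, with $q$ replaced by $-q$ throughout. I would use the description of $\omega(\PSU_n(q))$ in \cite{Buturlakin1}, which is obtained from the linear case by the formal substitution $q\mapsto -q$. For $n=2$ we have $\PSU_2(q)\cong\PSL_2(q)$, so $B_1$ and $B_2$ come directly from Lemma~\ref{PSL}. For $n=3$, the spectrum contains all divisors of
\[
\frac{q^2-q+1}{(3,q+1)},\qquad q^2-1,\qquad \frac{p(q+1)}{(3,q+1)},
\]
and these are exactly the conditions defining $B_4$. The restriction $q>2$ excludes the solvable group $\PSU_3(2)$. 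As with $B_3$, I would leave $B_4$ in arithmetic form.

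For large $n$ I would look for elements whose orders are divisible by expressions already controlled by Corollaries~\ref{Corollary 11}--\ref{Corollary 15}. Note that $q^{2k}-1$ is the same polynomial under $q\mapsto -q$, and $q^3+1$ divides $q^6-1$.

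\begin{itemize}
\item[(a)] For $n\geq 15$, a unipotent block together with semisimple blocks gives an element of order divisible by $p(q^{12}-1)$, namely $p[q^{12}-1,\dots]$. Corollary~\ref{Corollary 15} then rules this range out.
\item[(b)] For $9\leq n\leq 14$, an element of order $p(q^6-1)$ (or a multiple of it) exists, so Corollary~\ref{Corollary 14} forces $q=2$. For $\PSU_n(2)$ with $n\geq 10$, I would take a block decomposition such as $2+3+5$. This gives an element of order divisible by $2\cdot 3\cdot 11\cdot 5$, since $2^5+1=33$ and $q^4-1=15$ comes from a $4$-dimensional or $2+2$ block. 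The exact spectrum may instead give $2\cdot 3\cdot 5\cdot 7$ or $2\cdot 3\cdot 5\cdot 11$; either way $n\geq 10$ is eliminated, and $\PSU_9(2)$ is left as a candidate to be verified.
\item[(c)] For $n=6,7,8$, an element of order $p(q^4-1)$ or $p(q^3+1)$ should exist. With Corollary~\ref{Corollary 13} or a unitary analogue of Corollary~\ref{Corollary 12}, this reduces $q$ to a small finite set, which I would then check against the spectrum.
\end{itemize}

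For $n=4,5$, the spectrum contains $p(q^2-1)/(4,q+1)$ (respectively $p(q^2-1)/(5,q+1)$). The same parity argument as in the linear $n=4$ case shows that the factor $2$ survives the division. Corollary~\ref{Corollary 11} then gives $q\in\{2,3,4,5,7,8,9,17\}$. Finally I would test each candidate against the remaining spectral elements, such as $(q^4-1)/((4,q+1)(q+1))$ and $(q^3+1)/(4,q+1)$ for $n=4$, and $(q^5+1)/((5,q+1)(q+1))$ for $n=5$. This should leave exactly the groups listed in the statement; for example, $\PSU_5(5)$ and $\PSU_5(8)$ drop out because of primitive divisors of $q^5+1$.

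The main obstacle is steps (b) and (c). A unitary analogue of Corollary~\ref{Corollary 12} for $q^3+1$ is not yet proved, and the finite checks for $n=6,7,8,9$ depend on reading the precise element orders from \cite{Buturlakin1}, including the $(n,q+1)$ divisions. I would first try to avoid $q^3+1$ altogether by using the order $p(q^4-1)$, which is available for $n\geq 6$ via a unipotent $2$-block plus a $4$-block or $2+2$ semisimple blocks. Corollary~\ref{Corollary 13} then gives $q\in\{2,3\}$, leaving only $\PSU_n(2)$ and $\PSU_n(3)$ for $6\leq n\leq 9$ to check by hand.
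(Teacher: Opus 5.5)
Your strategy matches the paper's: Buturlakin's spectrum, reduction via Corollaries~\ref{Corollary 11}--\ref{Corollary 15}, then finite checks. However, several element orders you rely on are not in $\omega(\PSU_n(q))$, so individual steps fail as written.

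\textbf{The reductions for $n=6$ and $n=5$.} In \cite[Corollary~3]{Buturlakin1}, a semisimple block of dimension $m$ contributes $q^m-(-1)^m$. An order built from a unipotent block plus a \emph{single} semisimple block is divided by $d=(n,q+1)$.
\begin{enumerate}
\item Two semisimple $2$-blocks give $q^2-1$, never $q^2+1$.
\item For $n=6$, the only way to get $p$ together with $q^2+1$ is a unipotent $2$-block plus one $4$-block. This gives $p(q^4-1)/(6,q+1)$, not $p(q^4-1)$, and the division can lose the prime $3$: for $q=5$ it is $520=2^3\cdot5\cdot13$.
\item Hence Corollary~\ref{Corollary 13} does not yield $q\in\{2,3\}$ for $n=6$. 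It does work for $n=7,8$, via blocks $2+4+1$ and $2+4+2$.
\item The fix for $n=6$ is the paper's: use the undivided $p(q^2-1)=p[q^2-1,q^2-1]$ and Corollary~\ref{Corollary 11}. Then, for instance, $5(5^3+1)=630$ removes $q=5$.
\item For $n=5$ the same problem occurs. Dividing by $(5,q+1)$ can only remove the prime $5$, so your parity argument does not address it. It does happen: $q=19$ gives $19\cdot360/5=2^3\cdot3^2\cdot19$. You need the undivided $p(q^2-1)=p[q+1,q^2-1]$, from blocks $2+1+2$.
\end{enumerate}

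\textbf{The finite checks.} Your $n=10$ witness is wrong. For $q=2$, the decomposition $2+3+5$ gives $2[2^3+1,2^5+1]=198=2\cdot3^2\cdot11$, which has only three prime divisors. Neither $330$ nor $210$ lies in $\omega(\PSU_{10}(2))$. An element of even order has a unipotent part of dimension at least $2$, leaving at most $8$ dimensions, too few for blocks $4+5$ or $4+6$. The actual obstruction is $2(2^8-1)/(10,3)=510=2\cdot3\cdot5\cdot17$, as in the paper; your $330$ works only for $n\geq 11$.

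Your sample eliminations for $n=5$ are also incorrect. We have $(5^5+1)/6=521$, which is prime, and $(8^5+1)/9=11\cdot331$. So $q^5+1$ removes neither $\PSU_5(5)$ nor $\PSU_5(8)$. The actual witnesses are $5(5^3+1)=630$ and $[8^4-1,8+1]/(5,9)=4095=3^2\cdot5\cdot7\cdot13$. The finite checks therefore have to run over the full list in \cite[Corollary~3]{Buturlakin1}, not the few orders you name.

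\textbf{The case $n=4$, $q=2$.} This value survives every check, since $\PSU_4(2)\cong{\rm PSp}_4(3)$ has element orders $1,2,3,4,5,6,9,12$. Your claim that exactly the listed groups remain holds only if this group is counted under ${\rm PSp}_4(3)$. The paper's proof also passes over this case without comment.
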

\begin{proof}
The spectra of $\PSU_n(q)$ are described in \cite[Corollary~3]{Buturlakin1}.
Let $n=2$. If $p=2$, then $\omega(\mathrm{PSU}_2(q))=\{2,\text{all divisors of}\ q+1,\text{and}\ q-1\}$. If $p>2$, then $\omega(\mathrm{PSU}_2(q))=\{p,\text{all divisors of}\ \frac{q+1}{2},\text{and}\ \frac{q-1}{2}\}$. 
If $n=3$, then
\[
 \left\{\frac{q^2-q+1}{(3,q+1)},q^2-1,\frac{p(q+1)}{(3,q+1)}\right\}
 \subseteq\omega(\PSU_3(q)).
\]
This gives exactly the condition $q\in B_4$; as in the linear case, we leave the resulting arithmetic condition in this form.

If $n\geq15$, then $p[q^{12}-1,q^{n-14}-(-1)^{n-14}]\in\omega(\PSU_n(q))$. This order is divisible by $p(q^{12}-1)$, which has at least five distinct prime divisors by Corollary~\ref{Corollary 15}. Hence no group in this range satisfies the hypothesis. 

If $9\leq n\leq14$, then $p(q^6-1)\in\omega(\PSU_n(q))$. By Corollary~\ref{Corollary 14}, the only possible value is $q=2$. The remaining cases can be settled directly from \cite[Corollary 3]{Buturlakin1}. For $n=10,11,12,13,14$, respectively, the spectrum contains numbers divisible by
\[
510=2\cdot3\cdot5\cdot17,\qquad
1020=2^2\cdot3\cdot5\cdot17,\qquad
510,\qquad
4095=3^2\cdot5\cdot7\cdot13,\qquad
8190=2\cdot3^2\cdot5\cdot7\cdot13.
\]
Thus all these groups are excluded. For $n=9$, the same spectrum criterion leaves $\PSU_9(2)$ as a possible case.

If $n=8$, then $\{\frac{p(q^6-1)}{(8,q+1)}, (q^4-1)(q^2+q+1), \frac{(q^8-1)}{(8,q+1)(q+1)}, p(q^3+1)(q-1),\frac{(q^5+1)(q^2-q+1)}{(8,q+1)},\frac{q^7+1}{(8,q+1)}, (q^5+1)(q-1)\}\subseteq \omega(\PSU_8(q))$. Taking $q=p=2$, the preceding restriction leaves $\PSU_8(2)$ as a possible case.

If $n=7$, we have $p(q^4-1)\in \omega(\PSU_7(q))$. By Corollary~\ref{Corollary 13} and a direct computation using \cite[Corollary 3]{Buturlakin1}, the only remaining possible groups are $\PSU_{7}(2),\PSU_{7}(3)$. 
If $n=5$ or $6$, then $p(q^2-1)\in \omega(\PSU_n(q))$. By Corollary~\ref{Corollary 11} and explicit computations using \cite[Corollary 3]{Buturlakin1}, the only remaining possible groups are $\PSU_{6}(2),\PSU_{6}(3)$ and $\PSU_{5}(2),\PSU_{5}(3),\PSU_{5}(4),\PSU_{5}(7),\PSU_{5}(9)$. 
If $n=4$, then $\{\frac{q^4-1}{(4,q+1)(q+1)}, \frac{q^3+1}{(4,q+1)}, \frac{p(q^2-1)}{(4,q+1)}\}\subseteq \omega(\PSU_4(q))$. We next consider $\frac{p(q^2-1)}{(4,q+1)}$. If $(4,q+1)=2$ or $(4,q+1)=4=2^2$, then necessarily $p\neq 2$, and in this case $2^3\mid p(q^2-1)$. Thus division by $(4,q+1)$ does not remove the prime $2$. By Corollary~\ref{Corollary 11} and direct calculation, the only remaining possible groups are $\PSU_{4}(3), \PSU_{4}(4),\\ \PSU_{4}(5),\PSU_{4}(7),\PSU_{4}(8),\PSU_{4}(9),\PSU_{4}(17)$.

\end{proof}

\begin{lem} \label{PSp_{2n}} 
Let $G\cong{\rm PSp}_{2n}(q)$, where $n\geq2$ and $q$ is a power of an odd prime $p$. If every element order of $G$ is divisible by at most three distinct primes, then $G$ is one of the following:
\begin{enumerate}[$(1)$]
\item ${\rm PSp}_{4}(q)$, $q\in B_5$.
\item ${\rm PSp}_{6}(3),{\rm PSp}_{6}(9),{\rm PSp}_{6}(5),{\rm PSp}_{6}(7),{\rm PSp}_{8}(3),{\rm PSp}_{10}(3)$.
\end{enumerate}
\end{lem}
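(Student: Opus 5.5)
We follow the same pattern as Lemmas~\ref{PSL} and~\ref{PSU}, using the description of the spectrum of ${\rm PSp}_{2n}(q)$ for odd $q$ in \cite{Buturlakin2}. The idea is to exhibit, for each rank, a few explicit element orders of ${\rm PSp}_{2n}(q)$ and apply Corollaries~\ref{Corollary 11}--\ref{Corollary 15}. This cuts the problem down to a finite list of pairs $(n,q)$, except for $n=2$, which we leave in the arithmetic form $B_5$.

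\emph{Large and middle ranks.} For $n\geq 14$, the group contains a unipotent element of order $p$ commuting with a semisimple element from a subgroup ${\rm Sp}_{2}(q)\times{\rm Sp}_{12}(q)$, giving an element of order divisible by $p(q^{12}-1)/2$ or similar (the maximal torus of ${\rm Sp}_{12}(q)$ contains cyclic parts of orders $q^6\pm1$). Corollary~\ref{Corollary 15} then gives at least four odd primes among $3,5,7,13$ together with $p$ or $2$, so $\alpha\geq4$. More precisely, $p\cdot\frac{1}{2}[q^{6}-1,q^{6}+1]$ or $p\cdot[q^{n-2}\pm1]$-type orders from \cite{Buturlakin2} suffice, and dividing by $2$ only removes a factor $2$ that survives through $p$-parts anyway.

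For $4\leq n\leq 13$, the spectrum contains $p(q^{4}-1)/2$, coming from a unipotent block of size $2$ times a semisimple ${\rm Sp}_4$-part or a ${\rm GL}_2$-torus of order $q^2+1$ times $q^2-1$. Since $q$ is odd, $8\mid q^2-1$, so the halving keeps the prime $2$. Corollary~\ref{Corollary 13} then forces $q=3$. For $q=3$ and $6\leq n\leq13$, combining tori of orders $3^5\pm1$ or $3^3+1$ with $3^2+1$ and a unipotent $3$ gives orders divisible by $2\cdot3\cdot5\cdot7$ or $2\cdot3\cdot 5\cdot 61$. This eliminates those ranks and leaves ${\rm PSp}_8(3)$ and ${\rm PSp}_{10}(3)$, which we check directly against \cite{Buturlakin2}.

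\emph{Rank $3$.} The spectrum of ${\rm PSp}_6(q)$ contains $p(q^2-1)/2$, but with the factor $2$ retained as above. Corollary~\ref{Corollary 11} then gives $q\in\{3,5,7,9,17\}$. Of these, $q=17$ fails: $(17^3+1)/2$ together with $17^2-1$, or $17\cdot(17^2+1)/2$, yields a fourth prime. The direct check is with $17^2+1=2\cdot5\cdot29$ together with $p=17$ and the prime $3$ from $q+1$, via an element of order $p(q^2+1)(q+1)/2$ type. The remaining four values survive.

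\emph{Rank $2$.} For ${\rm PSp}_4(q)$, the maximal element orders are $(q^2\pm1)/2$, $p(q\pm1)$ (and $p$-powers), so the condition is exactly $q\in B_5$.

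\emph{Main obstacle.} The hard part is choosing, for each rank, an order from \cite{Buturlakin2} whose division by $2$ or $(2,q-1)$ provably does not lose a prime. The finite verification for $q=3$ across ranks $4$--$13$ and for $q\in\{3,5,7,9,17\}$ in rank $3$ has to be done carefully from the explicit spectrum formulas.
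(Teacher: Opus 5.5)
Your outline (Corollary~\ref{Corollary 15} for large rank, a smaller corollary to pin down $q$ in the middle ranks, Corollary~\ref{Corollary 11} in rank $3$, the $B_5$ conditions in rank $2$) parallels the paper and works for $n\ge5$. The middle-rank step fails at $n=4$, however. The order $p(q^4-1)/2=p[q^2-1,q^2+1]$ is not in $\omega({\rm PSp}_8(q))$. Your construction needs a $2$-dimensional unipotent block plus two $4$-dimensional semisimple pieces carrying $q^2-1$ and $q^2+1$, i.e.\ $2n\ge10$. Equivalently, a $2\times2$ unipotent block uses one unit of rank, leaving $n-1=3$ for the semisimple part, whereas $[q^2-1,q^2+1]$ needs $4$. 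Concretely, $120=3\cdot(3^4-1)/2\notin\omega({\rm PSp}_8(3))$.

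So Corollary~\ref{Corollary 13} cannot be invoked for ${\rm PSp}_8(q)$, and your reduction to $q=3$ in that rank is unsupported. The paper avoids this by using $p(q^3-1)$, which is an element order as soon as $n\ge4$. It combines this with Corollary~\ref{Corollary 12} and the restriction $q\in\{3,5,7,9\}$ inherited from a ${\rm PSp}_6(q)$ section. Within your framework, an easy repair is to use $p[q^2+1,q\pm1]=p\cdot\frac{q^2+1}{2}(q\pm1)$, which is available for every $n\ge4$. Since $\frac{q^2+1}{2}$ is odd and greater than $1$, it contributes an odd prime coprime to $q^2-1$. Unless $q=3$, one of $q\pm1$ has an odd prime divisor, so one of these two orders is divisible by $2$, $p$ and two further odd primes.

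In rank $3$ your exclusion of $q=17$ also rests on invalid witnesses. Neither $p[q^2+1,q+1]$ nor any order combining $(q^3+1)/2$ with $q^2-1$ occurs in ${\rm PSp}_6(q)$. Also $17\cdot(17^2+1)/2=5\cdot17\cdot29$ has only three prime divisors. The correct witness is $p(q^2+1)=17\cdot290=2\cdot5\cdot17\cdot29$. It is realized by an element of ${\rm Sp}_2(17)\times{\rm Sp}_4(17)$ formed from a transvection and an element of order $q^2+1$. No power of this element equals $-I$, so the factor $2$ survives in ${\rm PSp}_6(17)$.

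With these two fixes the rest of your argument is fine: the $n\ge14$ step, the elimination of $6\le n\le13$ for $q=3$ via $3\cdot[28,10]=420$, and rank $2$.
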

\begin{proof}
The spectra of ${\rm PSp}_{2n}(q)$ are described in \cite[Corollary~2]{Buturlakin2}. 
If $n\geq12$, then $p(q^{12}-1) \in\omega(G)$. By Corollary~\ref{Corollary 15}, the number $p(q^{12}-1)$ has at least five distinct prime divisors. Hence $G$ contains an element whose order has at least five prime factors.

Assume that $n=2$. By \cite[Corollary~2]{Buturlakin2},
\[
 \left\{\frac{q^2-1}{2},\frac{q^2+1}{2},p(q-1),p(q+1)\right\}
 \subseteq\omega({\rm PSp}_{4}(q)).
\]
Hence the hypothesis forces each of these four integers to have at most three distinct prime divisors, and therefore $q\in B_5$. We do not need to solve these arithmetic conditions explicitly.

If $n=3$, then $p(q^2-1) \in\omega({\rm PSp}_6(q))$. By Corollary~\ref{Corollary 11} and the spectrum criterion in \cite[Corollary~2]{Buturlakin2}, the only remaining possible groups are ${\rm PSp}_{6}(3), {\rm PSp}_{6}(5), {\rm PSp}_{6}(7), {\rm PSp}_{6}(9)$. 

If $4\leq n\leq11$, then, since ${\rm PSp}_6(q)$ embeds naturally in ${\rm PSp}_{2n}(q)$, the element order $p(q^3-1)\in\omega({\rm PSp}_6(q))$ also belongs to $\omega({\rm PSp}_{2n}(q))$. Corollary~\ref{Corollary 12} leaves the three even values $2,4,8$ or the odd-prime branch described there. Since $q$ is odd in the present lemma, and the $n=3$ case above already restricts a possible $\mathrm{PSp}_6(q)$ section to $q\in\{3,5,7,9\}$, only $q=3$ or $5$ can occur in the odd-prime branch. The spectrum criterion in \cite[Corollary~2]{Buturlakin2} then eliminates $q=5$ and all ranks except $n=4,5$ when $q=3$, leaving only ${\rm PSp}_{8}(3)$ and ${\rm PSp}_{10}(3)$ as possible groups.

\end{proof}

\begin{lem}\label{PSp4-even}
Let $G\cong{\rm PSp}_4(q)$, where $q=2^f$ and $f\geq2$. Then every element of $G$ has order divisible by at most three distinct primes if and only if $q\in B_{5,2}$.
\end{lem}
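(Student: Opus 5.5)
The plan is to use the explicit description of the spectrum of ${\rm PSp}_4(q)=\mathrm{Sp}_4(q)$ for even $q$, taken from \cite[Corollary~2]{Buturlakin2}. Since $q=2^f$, the center is trivial and we are working in $\mathrm{Sp}_4(q)$ itself. The maximal elements of $\omega(\mathrm{Sp}_4(2^f))$ with respect to divisibility are
\[
 q^2+1,\qquad q^2-1,\qquad 2(q-1),\qquad 2(q+1),\qquad 4 .
\]
This list can also be recovered structurally. The semisimple maximal tori have cyclic parts of orders $q^2+1$ and $q^2-1$ (the latter is a torus of type $(q^2-1)$ inside $\mathrm{Sp}_2(q^2)$ or $\mathrm{GL}_2(q)$). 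Mixed elements arise as a unipotent element of order $2$ centralizing a semisimple part from $\mathrm{Sp}_2(q)\cong\mathrm{SL}_2(q)$, which has order dividing $q\pm1$. Finally, unipotent elements have order at most $4$.

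Once this list is in hand, both directions reduce to bookkeeping, because $|\pi(m)|\le|\pi(M)|$ whenever $m\mid M$.

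\emph{Necessity.} Each of $q^2\pm1$ and $2(q\pm1)$ lies in $\omega(G)$. So if every element order of $G$ has at most three prime divisors, the four conditions defining $B_{5,2}$ hold. The element order $4$ contributes only the single prime $2$, so it imposes no further condition.

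\emph{Sufficiency.} Every element order divides one of the maximal orders listed above. Each of these has at most three prime divisors by the definition of $B_{5,2}$; for $4$ this is trivial. Hence every element order of $G$ has at most three prime divisors.

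The main obstacle is justifying that the displayed list really is the complete set of maximal element orders. In particular, one must rule out a mixed element whose order is $4$ times an odd number, or $2$ times a divisor of $q^2+1$.

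The first thing I would try is to read this directly off Buturlakin's corollary for $\mathrm{Sp}_{2n}(q)$ with $q$ even and $n=2$. Elements of order $2^k\cdot m$ with $m$ odd and $m>1$ require the unipotent part to lie in the centralizer of a nontrivial semisimple element. Such centralizers in $\mathrm{Sp}_4(q)$ are tori, or $\mathrm{SL}_2(q)\times$ (a cyclic group of order $q\pm1$), or $\mathrm{SL}_2(q^2)$-type subgroups only for $q^2\pm1$ elements of order dividing $q^2-1$. I would check that in each case the unipotent elements of the centralizer have order $2$ and that the semisimple part then divides $q\pm1$.

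The case $f\ge2$ excludes $\mathrm{Sp}_4(2)\cong S_6$, which is not simple. This matches the hypothesis $f\ge2$ in the definition of $B_{5,2}$.
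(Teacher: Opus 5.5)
Your proposal is correct and matches the paper's proof: both take the maximal element orders $q^2+1$, $q^2-1$, $2(q-1)$, $2(q+1)$, $4$ from Buturlakin's characteristic-$2$ spectrum description specialized to rank $2$, and both observe that the four conditions defining $B_{5,2}$ say exactly that these orders have at most three prime divisors. Two minor points, neither of which affects the argument: by the paper's own remark, the even-characteristic description is \cite[Corollary~3]{Buturlakin2}, not Corollary~2, which treats odd $q$; and your structural sketch's mention of $\mathrm{SL}_2(q^2)$-type centralizers is garbled, since the relevant semisimple centralizers are tori, $\mathrm{GL}_2(q)$, $\mathrm{GU}_2(q)$, and $C_{q\pm1}\times\mathrm{SL}_2(q)$.
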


\begin{proof}
In even characteristic, ${\rm PSp}_4(q)\cong\Omega_5(q)$. The spectrum description for the characteristic-$2$ symplectic and odd-dimensional orthogonal groups in \cite{Buturlakin2}, specialized to rank $2$, shows that every element order divides one of
\[
 q^2-1,\qquad q^2+1,\qquad 2(q-1),\qquad 2(q+1),\qquad 4,
\]
and each of the first four integers occurs as an element order. The order $4$ contributes only the prime $2$. Thus every element order has at most three distinct prime divisors if and only if
\[
 |\pi(q^2-1)|,\quad |\pi(q^2+1)|,\quad
 |\pi(2(q-1))|,\quad |\pi(2(q+1))|
\]
are all at most $3$. This is precisely the condition $q\in B_{5,2}$.
\end{proof}

\begin{lem} \label{O_{2n+1}} 
Let $G\cong\Omega_{2n+1}(q)$, where $n\geq3$ and $q$ is a power of an odd prime number $p$. If every element order of $G$ is divisible by at most three distinct primes, then $G$ is one of the following:
\begin{enumerate}[$(1)$]
\item $\Omega_7(3),\Omega_7(9),\Omega_7(5),\Omega_7(7)$.
\item $\Omega_9(3)$.
\item $\Omega_{11}(3)$.
\end{enumerate}
\end{lem}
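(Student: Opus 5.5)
We follow the same pattern as Lemma~\ref{PSp_{2n}}, using the spectrum of $\Omega_{2n+1}(q)$ for odd $q$ from \cite[Corollary~2]{Buturlakin2}. For odd $q$, the groups $\Omega_{2n+1}(q)$ and ${\rm PSp}_{2n}(q)$ have closely parallel spectra. Each contains semisimple elements of order $(q^n\pm1)/2$. Each contains mixed elements of order $p\cdot\operatorname{lcm}(q^{k}\pm1,\dots)$ coming from a unipotent Jordan block on a small nondegenerate subspace together with a semisimple part on its orthogonal complement.

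\textbf{Large rank.} First we dispose of $n\geq 12$, or more precisely of every $n$ for which the orthogonal complement of a $3$-dimensional subspace carrying a regular unipotent element (of order $p$) has room for a torus of order divisible by $q^{12}-1$. We will use $\Omega_{2n+1}(q)\supseteq \Omega_3(q)\times\Omega^{\pm}_{2n-2}(q)$ to produce an element of order divisible by $p(q^{12}-1)$ once $n$ is large (roughly $n\ge 13$). We will also read off directly from \cite[Corollary~2]{Buturlakin2} that $p[q^{6}-1,\dots]$-type orders occur already for $n\geq 12$. Corollary~\ref{Corollary 15} then gives at least five prime divisors in that order, so these $n$ are excluded.

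\textbf{Medium rank.} For $3\le n\le 11$ we note that $\Omega_7(q)$ contains an element of order $p(q^2-1)$: a unipotent element of order $p$ on a $3$-space combined with a semisimple element of order $q^2-1$ on a $4$-dimensional $+$-type complement. More crudely, ${\rm SO}_3(q)\times\Omega_4^{\pm}(q)$ yields the order $p\cdot(q^2-1)/2$ or better. Since $2$ still divides the result, as in the $\PSL_4$ computation in Lemma~\ref{PSL}, Corollary~\ref{Corollary 11} restricts $q$ to $\{3,5,7,9,17\}$. For $n\ge4$ we further embed $\Omega_9(q)$ and obtain an order divisible by $p(q^3-1)$ or $p(q^4-1)$ (unipotent on a $3$-space and a torus of order $q^3\pm1$ or $(q^4-1)/2$ on the $6$- or $8$-dimensional complement). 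Corollary~\ref{Corollary 12} or Corollary~\ref{Corollary 13} then cuts the list down to $q=3$ for $n\geq4$, and possibly $q=5$ via the Corollary~\ref{Corollary 12} odd-prime branch. This $q=5$ case is killed directly: $q^2+q+1=31$, but an element of order $5\cdot 4\cdot 13\cdot 3$ appears from the $q^4-1$ torus.

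\textbf{Finite check.} This leaves finitely many groups: $\Omega_7(q)$ for $q\in\{3,5,7,9,17\}$ and $\Omega_{2n+1}(3)$ for $4\le n\le 11$. For each we scan the maximal element orders listed in \cite[Corollary~2]{Buturlakin2}. $\Omega_7(17)$ is excluded by an order divisible by $17\cdot(17^2+1)/2$-type factors, since $145=5\cdot29$ combined with $2$ and $17$ gives four primes. For $q=3$ and $n\geq6$, an element of order $\operatorname{lcm}(3^5+1,3^1-1)\cdot$ or $3\cdot\operatorname{lcm}(3^4-1,3^5\pm1)$-type involves $2,5,11,61$ or $2,5,11,3$, giving four primes. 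This leaves exactly $\Omega_7(3),\Omega_7(5),\Omega_7(7),\Omega_7(9),\Omega_9(3),\Omega_{11}(3)$.

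\textbf{Main obstacle.} The main obstacle is the finite check for $q=3$, $6\le n\le 11$, together with $\Omega_7(17)$. There one must pick, from Buturlakin's description, a specific element order with four primes, and be careful with the factors of $2$ and the $\pm$ types of the tori. We would first try orders of the form $3\cdot\operatorname{lcm}(3^{a}\pm1,3^{b}\pm1)$ with $2a+2b+2\le 2n$, choosing $a,b$ so that primitive divisors $5,7,11,13$ appear.
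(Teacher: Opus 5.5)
Your outline has the same shape as the paper's proof: Corollary~\ref{Corollary 15} for large rank, the $\Omega_7(q)$ section plus Corollary~\ref{Corollary 11} to reach $q\in\{3,5,7,9,17\}$, then Corollaries~\ref{Corollary 12} and~\ref{Corollary 13}, then a finite check. However, the step at $n=4$ fails as written.

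In $\Omega_{2n+1}(q)$ with $q$ odd, a unipotent $J_3$ on a $3$-space combined with a \emph{single} cyclic torus of order $q^m\pm1$ on its complement does not lie in $\Omega$. The torus generator has non-square spinor norm; for a Singer cycle of $GL_3(q)\le SO_6^+(q)$ the spinor norm is its determinant. The centralizer of $J_3$ in $SO_3(q)$ is unipotent, so nothing can compensate. Accordingly, the odd-dimensional orthogonal description \cite[Corollary~6]{Buturlakin2} (your Corollary~2 is the symplectic one) contains only $p(q^{n-1}\pm1)/2$ for one torus factor. Full least common multiples $p[q^{n_1}\pm1,\dots,q^{n_s}\pm1]$ occur only for $s\ge2$. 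This is exactly where $\Omega_{2n+1}(q)$ and ${\rm PSp}_{2n}(q)$ differ, so the symplectic analogy misleads you.

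For $n=4$ you therefore only get $p(q^3\pm1)/2$. When $q\equiv3\pmod 4$, the number $(q^3-1)/2$ is odd, so Corollary~\ref{Corollary 12} cannot be invoked. Concretely, for $q=7$ these orders are $7\cdot171=3^2\cdot7\cdot19$ and $7\cdot172=2^2\cdot7\cdot43$. So your argument does not exclude $\Omega_9(7)$, and $\Omega_9(7)$ never reappears in your finite check. Likewise, the order $780$ you use for $q=5$ needs an $8$-dimensional complement, i.e.\ $n\ge5$; it is not an element order of $\Omega_9(5)$.

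The missing ingredient is the two-factor orders $p[q^2+1,q\pm1]\in\omega(\Omega_9(q))$. Put $J_3$ on a $3$-space, and on the $6$-dimensional complement take the $(q^2+1)$-torus of $SO_4^-(q)$ and the $(q\pm1)$-torus of $SO_2^{\mp}(q)$; their spinor norms cancel. For $q=5,7,9$ this gives $5\cdot[26,6]=390$, $7\cdot[50,6]=1050$ and $3\cdot[82,10]=1230$, each with four prime divisors. These are the orders the paper uses for ${\rm P}\Omega_8^-(q)\le\Omega_9(q)$. (The paper's blanket claim $p(q^4-1)\in\omega(\Omega_{2n+1}(q))$ for $4\le n\le 11$ has the same rank problem at $n=4$, where the complement of the $3$-space carries only tori of rank $3$. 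There it is these orders that actually exclude $q=5,7,9$.)

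Several other witnesses also need replacing:
\begin{enumerate}[$(i)$]
\item In $\Omega_7(q)$ every $p$-singular order divides $p(q^2\pm1)/2$ or $p[q\pm1,q\pm1]$, apart from $p^2$-orders when $p\le5$. So no element of $\Omega_7(17)$ has order divisible by $2\cdot5\cdot17\cdot29$. Use the semisimple order $[17^2+1,17+1]=2610=2\cdot3^2\cdot5\cdot29$ instead.
\item The odd-prime branch of Corollary~\ref{Corollary 12} retains $q=17$ ($17-1=2^4$, $307$ prime) as well as $q=5$. It disappears only via $\Omega_7(17)\le\Omega_{2n+1}(17)$.
\item For $q=3$, the order $[3^5+1,3-1]=244=2^2\cdot61$ has only two prime divisors, and $3[3^4-1,3^5\pm1]$ needs $n\ge10$. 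Use $3[3^3+1,3^2+1]=420=2^2\cdot3\cdot5\cdot7$, which is available for $n\ge6$.
\item $p(q^6-1)$ need not have five prime divisors (for $q=3$ it has four), so the right tool there is Corollary~\ref{Corollary 14}. The order $p[q^3-1,q^3+1]=p(q^6-1)/2$ lies in $\omega(\Omega_{2n+1}(q))$ once $n\ge7$ and has the same prime divisors as $p(q^6-1)$. This disposes of every $n\ge7$ for odd $q$ at once.
\end{enumerate}
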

\begin{proof}
The spectra of $G\cong\Omega_{2n+1}(q)$ are described in \cite[Corollary 6]{Buturlakin2}. 

If $n\geq12$, then $p(q^{12}-1) \in\omega(G)$. By Corollary~\ref{Corollary 15}, the number $p(q^{12}-1)$ has at least five distinct prime divisors. Hence $G$ contains an element whose order has at least five prime factors.

If $n=3$, then $\{\frac{q^3\pm1}{2}, (q^2+1)(q\pm1), \frac{p(q^2\pm 1)}{2},p(q\pm 1)\}\subseteq \omega(\Omega_{7}(q))$. By Corollary~\ref{Corollary 11} and the spectrum criterion in \cite[Corollary 6]{Buturlakin2}, the only remaining possible groups are $\Omega_7(3),\Omega_7(5),\Omega_7(7),\Omega_7(9)$.

If $4\leq n\leq11$, then $p(q^4-1)=p(q^2+1)(q^2-1)\in \omega(\Omega_{2n+1}(q))$. By Corollary~\ref{Corollary 13}, only small residual field values remain. Applying the spectrum criterion in \cite[Corollary 6]{Buturlakin2} leaves only $\Omega_9(3)$ and $\Omega_{11}(3)$ as possible cases.

\end{proof}

\noindent{\bf Remark.} When $n=1$ and  $q$ is a power of an odd prime number $p$, we have $\Omega_3(q)\cong {\rm PSL}_2(q)$ which has already been discussed in Lemma~\ref{PSL}. Suppose now that $p=2$. In this case ${\rm PSp}_{2n}(q)\cong \Omega_{2n+1}(q)$ and the spectra of $G=\Omega_{2n+1}(q)\simeq{\rm Sp}_{2n}(q)$ are given in \cite[Corollary 3]{Buturlakin2}. In particular, we have $2(q^2-1)\in \omega(G)$ when $n\geq 3$. By Corollary~\ref{Corollary 11}, the number $2(q^2-1)$ has at most three distinct prime divisors only when $q=2,4,$ or $8$. If $n\ge9$, then $2(q^8-1)\in \omega(G)$, and the number $2(q^8-1)$ has at least four distinct prime divisors. Therefore, it suffices to consider the cases $n=3,4,5,6,7,8$ with $q=2,4,8$. Applying the complete criterion in \cite[Corollary~3]{Buturlakin2} to these finitely many cases leaves
$\Omega_7(2)$, $\Omega_7(4)$, $\Omega_7(8)$, $\Omega_9(2)$, $\Omega_9(4)$, $\Omega_{11}(2)$, and $\Omega_{13}(2)$ as possible groups. Finally, let $n=2$. Then $\Omega_5(q)\cong{\rm PSp}_4(q)$, so these groups are already covered by the symplectic case. Moreover, $\Omega_5(2)'\cong {\rm PSL}_2(9)$, which is already covered by the linear case. Thus no additional group needs to be listed here.

\begin{lem} \label{PO_{2n}} 
Let $G\cong{\rm P}\Omega^{\epsilon}_{2n}(q)$, where $n\geq4,\epsilon\in\{+,-\}$ and $q$ is a power of an odd prime number $p$. If every element order of $G$ is divisible by at most three distinct primes, then $G$ is one of the following:
\begin{enumerate}[$(1)$]
\item ${\rm P}\Omega^{+}_{8}(3), {\rm P}\Omega^{+}_{8}(5), {\rm P}\Omega^{+}_{8}(7), {\rm P}\Omega^{+}_{8}(9), {\rm P}\Omega^{-}_{8}(3)$.
\item ${\rm P}\Omega^{+}_{10}(3)$, ${\rm P}\Omega^{-}_{10}(3)$.
\item ${\rm P}\Omega^{+}_{12}(3)$, ${\rm P}\Omega^{-}_{12}(3)$.
\end{enumerate}
\end{lem}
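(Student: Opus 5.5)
We follow the pattern of Lemmas~\ref{PSp_{2n}} and~\ref{O_{2n+1}}, using the spectrum description of ${\rm P}\Omega^{\pm}_{2n}(q)$ for odd $q$ in \cite{Buturlakin2} (the corollary treating even-dimensional orthogonal groups in odd characteristic). First we dispose of large rank. If $n\geq 13$, the natural subgroup $\Omega_{2n-1}(q)$ (or a suitable $\Omega^{\pm}_{26}(q)$ section) gives an element of order divisible by $p(q^{12}-1)$. Alternatively, one reads off directly from \cite{Buturlakin2} a unipotent element of order $p$ commuting with a semisimple element of order $(q^{12}-1)/(4,\dots)$; we only need that the prime $2$ is not lost, which holds since $p(q^{12}-1)$ carries a high power of $2$. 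Corollary~\ref{Corollary 15} then yields at least five primes, so these groups are excluded.

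For the middle range $5\leq n\leq 12$ we use that ${\rm P}\Omega^{\epsilon}_{2n}(q)$ contains a section $\Omega_9(q)$ (from a nondegenerate $9$-dimensional subspace) once $2n\geq 10$. In that section $p(q^4-1)$ is an element order up to a factor $2$ that does not remove the prime $2$. Corollary~\ref{Corollary 13} then forces $q=3$. For $q=3$ we apply the full spectrum criterion of \cite{Buturlakin2} rank by rank. For $n\geq 7$ we exhibit a concrete element order involving $2,3,5,7$ or $2,3,5,13$, for instance $3(3^6-1)$ via a unipotent block together with a semisimple block of order $3^6-1=2^3\cdot7\cdot13$, giving $\{2,3,7,13\}$; in smaller cases we combine blocks giving $5$ and $7$. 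This leaves $n=5,6$ with both signs.

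For $n=4$ the spectrum contains $p(q^2-1)/(4,q^2-1)$-type orders and $(q^2+1)(q\pm1)/\dots$, so Corollary~\ref{Corollary 11} reduces $q$ to $\{3,5,7,9\}$; here we again check that the divisor by $(4,q^n-\epsilon)$ never removes the prime $2$, exactly as in the $\PSL_4$ argument. For these four values we list the maximal element orders from \cite{Buturlakin2} for each sign. Take $\epsilon=-$: the element order $(q^4+1)/2$ together with $(q^3+1)(q-1)/\dots$ gives four primes for $q=5,7,9$. For instance, with $q=5$ we get $5^3+1=126=2\cdot3^2\cdot7$ times $q-1$ (which contributes only $2$), so we pair it instead with $p(q^2+1)$. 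This leaves only ${\rm P}\Omega^-_8(3)$, while all four plus-type groups survive.

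The main obstacle is the finite but tedious verification with the exact spectrum formulas, which involve $\gcd$ corrections like $(4,q^n-\epsilon1)$ and the $\epsilon$-dependence of the semisimple tori. For each excluded pair $(n,q,\epsilon)$ we must locate one explicit element order with four prime divisors. We would try first products of the form $p\cdot[q^{a}-\epsilon_1,q^{b}-\epsilon_2]$ with $a+b\leq n-2$, since these are the easiest admissible orders to read from \cite{Buturlakin2}.
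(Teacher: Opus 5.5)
Your overall plan matches the paper's: Corollary~\ref{Corollary 15} for large rank, a reduction to $q=3$ in the middle ranks followed by explicit orders, and a finite check for $n=4$. The gap is in the case $n=4$, $\epsilon=-$, where none of the element orders you propose excludes $q=5,7,9$. For $q=5,7,9$, $(q^4+1)/2$ equals $313$, $1201$, $17\cdot193$, and $[q^3+1,q-1]$ has prime sets $\{2,3,7\}$, $\{2,3,43\}$, $\{2,5,73\}$. ``Pairing'' $q^3+1$ with $q^2+1$ is impossible in dimension $8$, since the two tori need $6+4=10$ dimensions. Also, $p(q^2+1)$ alone gives $130$, $350$, $246$. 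Your fallback search over $p[q^a-\epsilon_1,q^b-\epsilon_2]$ with $a+b\le n-2=2$ cannot succeed either. It only produces the prime sets of $p(q^2\pm1)$, i.e.\ of $120,130$; $336,350$; $240,246$, each with three primes.

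The missing idea is that the unipotent part need not act on a separate $3$- or $4$-dimensional space fixed by the semisimple part. It can lie in the centralizer $\mathrm{GL}_2(q)$ or $\mathrm{GU}_2(q)$ of a semisimple element with a repeated eigenvalue of order dividing $q-1$ or $q+1$, which costs only $4$ dimensions. Adding a $(q^2+1)$-torus on a $4$-dimensional minus-type subspace gives the orders $p[q\pm1,(q^2+1)/2]\in\omega(\Omega_8^-(q))$ from \cite[Corollary~8]{Buturlakin2}; here ${\rm P}\Omega_8^-(q)=\Omega_8^-(q)$. For $q=5,7,9$ these are $390=2\cdot3\cdot5\cdot13$, $1050=2\cdot3\cdot5^2\cdot7$ and $1230=2\cdot3\cdot5\cdot41$, which is exactly what the paper uses.

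Three smaller points:
\begin{enumerate}
\item Corollary~\ref{Corollary 11} leaves $q\in\{3,5,7,9,17\}$, not $\{3,5,7,9\}$, so $q=17$ must still be removed for both signs. The paper does this through the $\Omega_7(q)$ section in a nonsingular point stabilizer together with Lemma~\ref{O_{2n+1}}. Alternatively, $[q^2+1,q+1]$ has prime set $\{2,3,5,29\}$, provided you check that the central quotient does not remove the $2$.
\item The same dimension count shows that $p(q^{12}-1)$ needs $2n\ge24+4$. This is why the paper starts the large-rank range at $n=14$; move $n=13$ into your middle range.
\item The prime set of $p(q^4-1)$ is not realized inside $\Omega_9(q)$ in general. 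The $(q^2+1)$- and $(q^2-1)$-tori already use $8$ of the $9$ dimensions, leaving no room for a commuting unipotent element. For the reduction to $q=3$, either quote the conclusion of Lemma~\ref{O_{2n+1}} for $\Omega_9(q)$, as the paper does for $n=5$, or find the element in $G$ itself. For $n\ge6$ three $4$-dimensional pieces fit, and for $n=5$ you need the same $\mathrm{GL}_2/\mathrm{GU}_2$ device.
\end{enumerate}
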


\begin{proof}
The spectra of ${\rm P}\Omega^{\epsilon}_{2n}(q)$ are given in \cite[Corollary~9]{Buturlakin2} when $(4,q^n-\epsilon)=4$.
If $(4,q^n-\epsilon)=2$, then the center of $\Omega^{\epsilon}_{2n}(q)$ has exponent $1$, and hence
${\rm P}\Omega^{\epsilon}_{2n}(q)\cong \Omega^{\epsilon}_{2n}(q)$.
In this case, the spectra are described in \cite[Corollary~8]{Buturlakin2}.

If $n\ge14$, then $p(q^{12}-1)\in\omega(G)$.
By Corollary~\ref{Corollary 15}, the number $p(q^{12}-1)$ has at least five distinct prime divisors, so no such group satisfies the condition.

Now consider $8\le n\le13$.
By \cite[Corollary~9]{Buturlakin2}, we have $p(q^6-1)\in\omega(G)$.
Since $p$ is odd, Corollary~\ref{Corollary 14} implies that no such group satisfies the condition.

If $n=7$, then $p(q^2+1)(q^3-1)\in\omega(G)$. Since $p(q^3-1)$ divides this element order, Corollary~\ref{Corollary 12} applies. In its odd-prime branch we have $q-1=2^a$ and $q^2+q+1=r^b$. Let $s$ be an odd prime divisor of $q^2+1$. Then $s\neq r$, because $(q^2+1,q^2+q+1)=1$. Hence the displayed element order is divisible by the four distinct primes $2,q,r,s$. Thus the odd-prime branch is impossible. The remaining even values from Corollary~\ref{Corollary 12} do not occur here because $q$ is odd. Therefore this case is excluded.

If $n=6$, then $p(q^4-1)\in\omega(G)$.
By Corollary~\ref{Corollary 13}, it suffices to consider $q=p=3$.
Inspection of the spectra in \cite[Corollary~9]{Buturlakin2} shows that
the only remaining possible groups in this subcase are ${\rm P}\Omega^{+}_{12}(3)$ and ${\rm P}\Omega^{-}_{12}(3)$.

Finally, consider $n=4,5$. The stabilizer of a nonsingular $1$-space has a section of type $\Omega_{2n-1}(q)$. Hence Lemma~\ref{O_{2n+1}} gives an immediate necessary restriction on $q$.

If $n=5$, the corresponding section is $\Omega_9(q)$, so the preceding lemma leaves only $q=p=3$. Hence the only remaining possible groups in this subcase are ${\rm P}\Omega^{+}_{10}(3)$ and ${\rm P}\Omega^{-}_{10}(3)$.

If $n=4$, the corresponding section is $\Omega_7(q)$, so it remains only to consider $q=3,5,7,9$. Here Corollaries~8 and~9 of \cite{Buturlakin2} apply also for $n=4$ and give a complete description of the spectrum. Substitution shows that
\[
 {\rm P}\Omega^{+}_{8}(3),\ {\rm P}\Omega^{+}_{8}(5),\ {\rm P}\Omega^{+}_{8}(7),\ {\rm P}\Omega^{+}_{8}(9)
\]
have no element order divisible by four distinct primes. For minus type, $q=3$ also satisfies the condition. The other three field values are excluded explicitly: Corollary~8 contains the order
\[
 p\left[q\pm1,\frac{q^2+1}{2}\right]
\]
for ${\rm P}\Omega^-_8(q)=\Omega^-_8(q)$. Taking the indicated sign gives
\[
390=2\cdot3\cdot5\cdot13\quad(q=5),\qquad
1050=2\cdot3\cdot5^2\cdot7\quad(q=7),
\]
and
\[
1230=2\cdot3\cdot5\cdot41\quad(q=9).
\]
Thus the only minus-type possibility is ${\rm P}\Omega^-_8(3)$.
\end{proof}

\noindent{\bf Remark.}  The spectra of simple orthogonal groups of even dimension over fields of characteristic $2$ are given in
\cite[Corollary~4]{Buturlakin2}.
If $n\ge10$, then $2(q^4+1)(q^4-1)\in \omega(\Omega_{2n}(q))$. Since $2(q^4-1)$ divides this element order, Corollary~\ref{Corollary 13} shows that a power $q$ of $2$ can satisfy the hypothesis only when $q=2$. For $q=2$ the displayed order is
$2(2^4+1)(2^4-1)=510=2\cdot3\cdot5\cdot17$, so this last case is also excluded.
If $n\ge6$, then
$2(q^2+1)(q^2-1)=2(q^4-1)\in \omega(\Omega_{2n}(q))$.
By Corollary~\ref{Corollary 13}, a power $q$ of $2$ can satisfy the hypothesis only when $q=2$. Thus, for $6\le n\le9$ it remains only to consider $q=2$.
Applying the spectrum criterion in \cite[Corollary~4]{Buturlakin2} to these residual cases leaves $\Omega_{12}^{+}(2)$ and $\Omega_{12}^{-}(2)$ when $n=6$, and $\Omega_{14}^{+}(2)$ when $n=7$; no case remains for $n=8$ or $n=9$.
Next consider $n=5$. By Corollary~\ref{Corollary 11}, the necessary condition $2(q^2-1)\in\omega(\Omega_{10}^{\epsilon}(q))$ restricts $q$ to $2,4,8$. \cite[Corollary~4]{Buturlakin2} applies here. For $q=4$ both signs contain an element of order
\[
2[4-1,4+1,4^2+1]=510=2\cdot3\cdot5\cdot17,
\]
and $q=8$ is excluded in the same criterion. Thus only $\Omega_{10}^{+}(2)$ and $\Omega_{10}^{-}(2)$ remain possible.

For $n=4$, a nonsingular-point stabilizer has a section isomorphic to $\Omega_7(q)\cong\mathrm{Sp}_6(q)$, so the preceding characteristic-$2$ odd-dimensional analysis restricts $q$ to $2,4,8$. If $q=8$, a maximal torus of $\Omega_8^{+}(8)$ has order $8^4-1=4095=3^2\cdot5\cdot7\cdot13$, while a maximal torus of $\Omega_8^{-}(8)$ corresponding to the signed partition $2+2$ has order $[8^2-1,8^2+1]=4095$. Hence $q=8$ is impossible for either sign. Therefore the only remaining possible groups are
$\Omega_8^{+}(2),\ \Omega_8^{-}(2),\ \Omega_8^{+}(4),\ \Omega_8^{-}(4)$.
Finally, for $n\le3$, the structures of these groups are closely related to those of groups discussed earlier, and several classical isomorphisms occur. For example, ${\rm P}\Omega^{+}_{6}(q)\cong {\rm PSL}_4(q)\quad (p\neq2)$, ${\rm P}\Omega^{-}_{6}(q)\cong {\rm PSU}_4(q)$, and ${\rm P}\Omega^{-}_{4}(q)\cong \Omega^{-}_{4}(q)\cong {\rm PSL}_2(q^2)$. Therefore, these cases need not be discussed separately.

\begin{lem}\label{E_6,^2E_6,E_7,E_8}
Let $G$ be a simple group of type $E_6(q)$, ${}^2E_6(q)$, $E_7(q)$, or $E_8(q)$, where $q$ is a power of a prime $p$.
If the order of every element of $G$ is divisible by at most three distinct primes, then
\[
G\cong E_6(2)\quad\text{or}\quad G\cong {}^2E_6(2).
\]
\end{lem}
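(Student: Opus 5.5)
The plan is to follow the pattern of the classical-group lemmas: first locate a single element order of the form $p(q^k-1)$ in each of the four families, use Corollaries~\ref{Corollary 13} and~\ref{Corollary 14} to force $q=2$ (or a very small $q$), and then finish with a finite check against the known spectra of the remaining groups.

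\textbf{Step 1: reduce $q$.}
\begin{itemize}
\item For $E_8(q)$ and $E_7(q)$ we embed a subsystem subgroup of Levi type $A_6$ or larger, namely $\mathrm{SL}_7(q)$ or $\mathrm{SL}_8(q)$. A unipotent $2$-block together with a semisimple block then gives an element whose order is divisible by $p(q^6-1)$ up to a bounded central factor. Alternatively we can use a Levi subgroup $D_6$ or $D_7$ inside $E_7$ or $E_8$.
\item The simpler route is a single element order: a regular unipotent element of a Levi factor of type $A_1$ times an element of a torus of the complementary Levi. For example, $E_7(q)\supseteq A_1(q)\times D_6(q)$, and $D_6$ contains a semisimple element of order divisible by $q^6-1$ (from a cyclic torus of order $q^6-1$ in $\mathrm{GL}_6\le D_6$).
\item Either construction gives $p(q^6-1)$ dividing some element order, up to a central quotient $(2,q-1)$ or $(3,q-1)$. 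That quotient removes at most one prime from a number with enough primes, but I must check it does not lose a prime entirely.
\item Corollary~\ref{Corollary 14} then yields $q=2$ for $E_7$ and $E_8$.
\item For $E_6(q)$ and ${}^2E_6(q)$ I will use the Levi subgroup of type $D_5$ (respectively ${}^2D_5$), or the subgroup $A_1\times A_5$. This gives an element of order divisible by $p(q^4-1)/d$ with $d=(3,q\mp1)$. Since $d$ divides $3$ and $q^4-1$ is already divisible by $3$ to a high enough power whenever $3\mid d$, no prime is lost.
\item Corollary~\ref{Corollary 13} then leaves $q\in\{2,3\}$.
\end{itemize}

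\textbf{Step 2: handle the residual groups.} These are $E_6(2)$, ${}^2E_6(2)$, $E_6(3)$, ${}^2E_6(3)$, $E_7(2)$ and $E_8(2)$. Each of the last four will be excluded by exhibiting a maximal torus, or a torus element times a commuting unipotent element, of order divisible by four primes.
\begin{itemize}
\item $E_8(2)$ has a cyclic maximal torus of order $\Phi_{30}(2)=331$. More usefully, it has a torus of order $(2^5-1)(2^3-1)\cdots$, and the torus of type $A_4+A_4$ has order $(2^5-1)^2$. I would pick a torus of order $2^8-1=255=3\cdot5\cdot17$ centralised by a unipotent element, or a torus of order $(2^2+1)(2^6-1)=5\cdot 3^2\cdot 7$ times a commuting involution, to get $2\cdot3\cdot5\cdot7$.
\item $E_7(2)$ similarly contains $\mathrm{SL}_8(2)/\ldots$ (type $A_7$). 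Its torus of order $(2^3-1)(2^5-1)=7\cdot31$ combined with a cyclic $3$-part, or the torus $(2^4-1)(2^3-1)=3\cdot5\cdot7$ centralised by an involution in the remaining $A_1$, gives $2\cdot3\cdot5\cdot7$.
\item For $E_6(3)$ and ${}^2E_6(3)$, the $D_5$ Levi already contains $p(q^2+1)(q^3\mp1)$-type orders, for instance $3\cdot10\cdot26$, which is divisible by $2,3,5,13$.
\item Finally I will confirm that $E_6(2)$ and ${}^2E_6(2)$ satisfy the condition, using their known spectra (Buturlakin's description of exceptional spectra, or the ATLAS). Their primes are $\{2,3,5,7,13,17,31,73\}$ and $\{2,3,5,7,11,13,17,19\}$, and I will check that the maximal element orders never involve four primes.
\end{itemize}

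\textbf{Main obstacle.} I expect the main difficulty to be Step 1: rigorously certifying that the claimed element orders lie in $\omega(G)$ for the adjoint simple group, with the centre handled correctly. I would rely on Buturlakin's explicit spectrum descriptions of exceptional groups of Lie type, reading off concrete members such as $p(q^6-1)$ for $E_7$ and $E_8$ and $p(q^4-1)$ for $E_6^{\pm}$, rather than constructing them by hand.
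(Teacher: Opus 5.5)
Two steps of your plan fail as written. The main one is the exclusion of $E_6(3)$ and ${}^2E_6(3)$, which rests on an element that does not exist. The other is the handling of the centre for $E_6^{\pm}(q)$.

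\textbf{The witness for $E_6(3)$ and ${}^2E_6(3)$.} The order $3\cdot10\cdot26$ (or $3\cdot10\cdot28$ for the other sign) does not occur, and the $D_5$ Levi cannot supply it.
\begin{itemize}
\item In $\Omega^{\pm}_{10}(3)$, a semisimple element whose order is divisible by $5$ and $13$ needs a $4$-dimensional anisotropic block, since $5$ is a primitive divisor of $3^4-1$.
\item It also needs a $6$-dimensional block carrying the non-self-dual eigenvalues of order $13$.
\item These two blocks exhaust the space, so the centraliser is a torus and no element of order $3$ commutes with it.
\end{itemize}
The ambient group does not rescue you either. No class of $W(E_6)$ has characteristic polynomial divisible by $\Phi_3\Phi_4$. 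Hence $E_6(3)$ has no element of order $65$, and likewise ${}^2E_6(3)$ has no element of order $35$.

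The repair uses the subgroup you already invoke in Step 1, namely $\mathrm{SL}_2(3)\circ\mathrm{SL}_6(3)\le E_6(3)$ and $\mathrm{SL}_2(3)\circ\mathrm{SU}_6(3)\le{}^2E_6(3)$.
\begin{itemize}
\item $\mathrm{SL}_6(3)$ contains a cyclic group of order $(3^6-1)/2=364$, and $\mathrm{SU}_6(3)$ one of order $(3^6-1)/4=182$.
\item Multiplying a generator by a unipotent element of order $3$ in $\mathrm{SL}_2(3)$ gives element orders $1092=2^2\cdot3\cdot7\cdot13$ and $546=2\cdot3\cdot7\cdot13$.
\item Both groups have trivial centre, because $(3,3\mp1)=1$.
\end{itemize}
Do not substitute the paper's witnesses here. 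Semisimple elements of $E_6(3)$ of order divisible by $35$, and of $E_7(2)$ of order divisible by $255$, are regular. This follows from checking which classes of $W(E_6)$, respectively which elements of $W(D_6)$ acting on the complement of a root of $E_7$, can carry $\Phi_4\Phi_6$, respectively $\Phi_4\Phi_8$ together with a factor giving $3$. Consequently $1680\notin\omega(E_6(3))$ and $2040\notin\omega(E_7(2))$.

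\textbf{The centre in Step 1 for $E_6^{\pm}$.} Your justification is false. When $3\mid d=(3,q\mp1)$ one has $v_3(q^4-1)=v_3(q\mp1)$, and this can equal $1$.
\begin{itemize}
\item For $E_6(4)$ the number $p(q^4-1)/d$ is $2\cdot5\cdot17$.
\item For ${}^2E_6(5)$ it is $2^4\cdot5\cdot13$.
\item In both cases Corollary~\ref{Corollary 13} yields nothing.
\end{itemize}
You can either quote the spectrum, as the paper does, or choose the torus element so that its $3$-part is not central. For example, take $\mathrm{diag}(S,1,(\det S)^{-1})\in\mathrm{SL}_6(q)$ with $S$ a Singer cycle of $\mathrm{GL}_4(q)$, and argue analogously in $\mathrm{SU}_6(q)$. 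Its $3$-part is non-scalar, so it lies outside the centre of $\mathrm{SL}_2(q)\circ\mathrm{SL}_6(q)$, which contains $Z(E_6(q))$.

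\textbf{What is sound, and how it compares with the paper.}
\begin{itemize}
\item For $E_6^{\pm}(q)$ your route coincides with the paper's: an element of order $p(q^4-1)$, Corollary~\ref{Corollary 13}, and then known spectra for $q=2$.
\item For $E_7(q)$ and $E_8(q)$ you take a genuinely different and cleaner route. The paper also uses $p(q^4-1)$, so it must dispose of $q=3$ separately through the Levi sections $E_6(3)$ and $E_7(3)$.
\item Your element $us$, with $u$ unipotent in $\mathrm{SL}_2(q)$ and $s$ from the $\mathrm{GL}_6$-torus of the $D_6$ factor, carries every prime of $p(q^6-1)$. Corollary~\ref{Corollary 14} then forces $q=2$ at once.
\item Here the centre really is harmless: only the prime $2$ is at risk, and $8\mid q^6-1$ for odd $q$.
\item Your $2\cdot3\cdot5\cdot7$ witness for $E_7(2)$ is correct. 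To make it precise, $\Omega^+_{12}(2)$ has a cyclic torus of order $(2^3-1)(2^2+1)(2+1)=105$. So $\mathrm{SL}_2(2)\times\Omega^+_{12}(2)\le E_7(2)$ contains an element of order $210$, and $E_8(2)$ inherits it from its $E_7(2)$ Levi.
\end{itemize}
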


\begin{proof}
The spectra of $E_6(q)$ and ${}^2E_6(q)$ are described in \cite{Buturlakin3}. In particular, $p(q^4-1)\in \omega(E_6(q)) \quad\text{and}\quad p(q^4-1)\in \omega({}^2E_6(q))$. By Corollary~\ref{Corollary 13}, it suffices to consider the cases $q=p=2$ and $q=p=3$.

For $q=2$, Theorem~1 of \cite{Buturlakin3} gives a finite set $\nu(G)$ with $\mu(G)\subseteq\nu(G)\subseteq\omega(G)$, where $\mu(G)$ denotes the element orders maximal under divisibility. Substituting $q=2$ in that list shows that every number in $\nu(E_6(2))$ and in $\nu({}^2E_6(2))$ is divisible by at most three distinct primes. Hence every element order in these two groups has at most three distinct prime divisors. Thus $E_6(2)$ and ${}^2E_6(2)$ satisfy the condition.

For $q=3$, the group $E_6(3)$ contains an element of order $1680=2^4\cdot3\cdot5\cdot7$, while ${}^2E_6(3)$ contains an element of order $2184=2^3\cdot3\cdot7\cdot13$. Hence neither group satisfies the condition.

The spectra of $E_7(q)$ and $E_8(q)$ are given in \cite{Buturlakin4} and \cite{Buturlakin5}, respectively. In particular, the same $p(q^4-1)$ obstruction reduces these families to $q=2$ or $3$ by Corollary~\ref{Corollary 13}. For $q=3$, a Levi subgroup of $E_7(3)$ has a section of type $E_6(3)$, which has already been excluded above; similarly, $E_8(3)$ has a section of type $E_7(3)$. Hence it remains only to consider $q=2$. The group $E_7(2)$ contains an element of order $2040=2^3\cdot 3\cdot 5\cdot 17$, which has four distinct prime divisors, and hence does not satisfy the condition.
The group $E_8(2)$ has a section of type $E_7(2)$. Since the order of an element in a quotient divides the order of any lift, the element of order $2040$ in that section yields an element of $E_8(2)$ whose order is divisible by $2,3,5,$ and $17$. Thus $E_8(2)$ also fails the condition.
\end{proof}

\begin{lem} \label{F_4,^3D_4} 
Let $G$ be a finite simple group of type $F_4(q)$ or $^3D_4(q)$, where $q$ is a power of a prime $p$. If every element order of $G$ is divisible by at most three distinct primes, then $G$ is one of the following:
\begin{enumerate}[$(1)$]
\item $F_4(2),F_4(3),F_4(4)$.
\item $^3D_4(2),^3D_4(4),^3D_4(8),^3D_4(3)$.
\end{enumerate}
\end{lem}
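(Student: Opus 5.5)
We follow the template of the classical lemmas: find one element order that reduces $q$ to a short list via the earlier corollaries, then test each remaining $q$ against the full spectrum. The spectra are in the literature: those of $F_4(q)$ in Buturlakin-type or Deng--Shi descriptions, and those of ${}^3D_4(q)$ in \cite{Deng}.

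\emph{The family $F_4(q)$.} A Levi subgroup of $F_4(q)$ of type $B_3$ or $C_3$ contains unipotent-by-torus elements. In particular $p(q^4-1)$ should lie in $\omega(F_4(q))$: take a regular unipotent of a $B_2$ or $A_1$ factor commuting with a torus of order $q^4-1$, as happens in $\Omega_9(q)\le F_4(q)$. For $p=2$ we can also use $\mathrm{Sp}_8(q)\le F_4(q)$ and the odd-dimensional remark, which gives $2(q^4-1)\in\omega$.

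By Corollary~\ref{Corollary 13} this would leave only $q\in\{2,3\}$. That does not include $q=4$, which the statement lists. So the reduction must be weaker, and we use $p(q^2-1)\in\omega(F_4(q))$ instead. Corollary~\ref{Corollary 11} then gives
\[
 q\in\{2,3,4,5,7,8,9,17\}.
\]
For each of these values we substitute into the known list of maximal element orders of $F_4(q)$ and check for elements with four primes. We expect $q=5,7,8,9,17$ to be excluded by a semisimple torus order; for instance, $(q^4-1)$ or $(q^3-1)(q+1)$ should have at least four prime divisors. The values $q=2,3,4$ should survive.

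\emph{The family ${}^3D_4(q)$.} The spectrum of ${}^3D_4(q)$ contains $p(q^3-1)$ and $p(q^3+1)$, coming from $\mathrm{SL}_2(q^3)$-type and $A_1(q)\times A_1(q^3)$ centralizers. It also contains the torus orders $q^4-q^2+1$ and $(q^3\pm1)(q\pm1)$. Since $p(q^2-1)$ divides $p(q^3-1)(q+1)$, and $p(q^2-1)$ occurs inside the $A_1(q)$ factor, Corollary~\ref{Corollary 11} again reduces us to $q\in\{2,3,4,5,7,8,9,17\}$. Alternatively, Corollary~\ref{Corollary 12} applied to $p(q^3-1)$ gives $q\in\{2,4,8\}$ or an odd prime $q$ with $q-1=2^a$, that is, $q\in\{3,5,17\}$ among our candidates.

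We then check $q=5,7,9,17$ directly and expect them to fail. For $q=5$, the number $(q^3+1)(q-1)=126\cdot4$ has primes $2,3,7$, which does not settle it. Instead we use $q^4-q^2+1=601$ together with other factors, or $p(q^3+1)=5\cdot126$, which has primes $2,3,5,7$, so $q=5$ is excluded. Similar products dispose of $7,9,17$. The values $q=2,3,4,8$ should pass.

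\emph{Main obstacle.} The hard step is verifying the surviving cases, especially $F_4(4)$, ${}^3D_4(8)$ and $F_4(3)$. Every maximal element order must be listed and checked to involve at most three primes, which requires the complete spectrum description rather than sample elements. We would take the maximal-order lists from the cited spectrum papers and factor each entry at the given $q$. The exclusions need only one explicit bad element each, which we would extract from torus orders or centralizers of unipotents.
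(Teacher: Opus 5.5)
\textbf{Gap in the ${}^3D_4(q)$ case.} Your finite list $q\in\{2,3,4,5,7,8,9,17\}$ depends on $p(q^2-1)\in\omega({}^3D_4(q))$, and neither of your justifications holds. $\mathrm{SL}_2(q)$ has no element of order $p(q^2-1)$, since its element orders divide $q-1$, $q+1$ or $2p$. And $p(q^3-1)(q+1)$ is not an element order, so divisibility into it proves nothing.

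The claim is in fact false. For even $q$ the involution centralizers are $q^{1+8}{:}\mathrm{SL}_2(q^3)$ and $q^{2+3+6}{:}\mathrm{SL}_2(q)$. By Schur--Zassenhaus, the odd part of any element of even order therefore divides $q^3\pm1$ or $q\pm1$. For example, $30=p(q^2-1)$ is not an element order of ${}^3D_4(4)$.

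Your fallback via Corollary~\ref{Corollary 12} is correct as far as it goes. However, it leaves $q\in\{2,4,8\}$ together with \emph{every} Fermat prime $q$ for which $q^2+q+1$ is a prime power, which is an open-ended set. Writing ``$q\in\{3,5,17\}$ among our candidates'' silently intersects this set with the unproved list. What is missing is a uniform exclusion of this branch for $q>3$.

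The paper supplies it as follows. Since $q-1=2^a$, we have $q\equiv2\pmod 3$, so $2$, $3$ and $q$ all divide $q(q+1)$. A primitive prime divisor of $q^6-1$ divides $q^2-q+1$ and is not $3$. Hence $p(q^3+1)$ has four prime divisors.

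Alternatively, you could use the torus order $(q^3-1)(q+1)$ that you listed. Its prime set contains $2$, $3$ and the prime $r$ with $q^2+q+1=r^b$, where $r\neq3$ and $r\nmid q+1$. So $q+1=2(2^{a-1}+1)$ must be a $\{2,3\}$-number. Then $3^y-1=2^{a-1}$, and Proposition~\ref{3.1} gives $q\in\{5,17\}$, which your explicit checks already exclude.

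\textbf{The $F_4(q)$ case.} Your route is valid and differs from the paper's. The paper applies Corollary~\ref{Corollary 12} to $p(q^3-1)$, then kills the Fermat-prime branch with $q(q^2+1)(q+1)$ and the case $q=8$ with $910$. Applying Corollary~\ref{Corollary 11} to $p(q^2-1)$ gives a finite list at once.

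You still need to justify $p(q^2-1)\in\omega(F_4(q))$. One way: inside $\mathrm{Sp}_6(q)\le F_4(q)$, take a transvection of $\mathrm{Sp}_2(q)$ times an element of order $q^2-1$ in the $\mathrm{GL}_2(q)$ Levi subgroup of $\mathrm{Sp}_4(q)$.

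Your sample torus orders $q^4-1$ and $(q^3-1)(q+1)$ have only three prime divisors for $q=5,7$. Those two cases need mixed orders such as $p(q^3+1)=630$ and $p(q^3-1)=2394$.

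The claims $p(q^4-1)\in\omega(F_4(q))$ and $2(q^4-1)\in\omega(\mathrm{Sp}_8(q))$ that you floated fail already for $q=4$, where they would produce an element of order $510=2\cdot3\cdot5\cdot17$. Your reason for discarding them was nonetheless not valid. The lemma only states a necessary condition, so a stronger reduction excluding $F_4(4)$ could not contradict it.

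For the same reason, verifying that the surviving groups satisfy the hypothesis is not needed for this lemma. The substantive step is the infinite Fermat-prime branch above.
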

\begin{proof}

The spectra of the groups $F_4(q)$ and ${}^3D_4(q)$ are given in \cite[Theorems~3.1 and~3.2]{Grechkoseeva}.
In particular,
$p(q^2+1)(q\pm1),\; p(q^3\pm1)\in \omega(F_4(q))
\quad\text{and}\quad
p(q^3\pm1)\in \omega({}^3D_4(q))$.

By Corollary~\ref{Corollary 12}, the condition $|\pi(p(q^3-1))|\leq3$ leaves $q\in\{2,4,8\}$ or the odd-prime branch described there.

For $F_4(q)$, suppose first that $q$ lies in the odd-prime branch. Then $q-1$ is a power of $2$. If $q>3$, both $q+1$ and $q^2+1$ have odd prime divisors, say $r$ and $s$, and $(q+1,q^2+1)=2$, so $r\neq s$. Hence the element order $q(q^2+1)(q+1)$ is divisible by the four distinct primes $2,q,r,s$. Thus the only possible odd value is $q=3$. Among the even values, $q=8$ is excluded by
\[
2(8^2+1)(8-1)=910,
\]
which has four distinct prime divisors. Therefore only $q=2,3,4$ remain as possible parameters for $F_4(q)$.

For ${}^3D_4(q)$, let $q$ lie in the odd-prime branch and suppose $q>3$. Since $q-1$ is a power of $2$, we have $q\equiv2\pmod3$, and hence $3\mid q+1$. By Zsigmondy's theorem there is a primitive prime divisor $r$ of $q^6-1$; it divides $q^2-q+1$ and satisfies $r\neq3$. Therefore
\[
q(q^3+1)=q(q+1)(q^2-q+1)
\]
is divisible by the four distinct primes $2,3,q,r$, a contradiction. Hence only $q=3$ remains from the odd branch. The even values $q=2,4,8$ remain possible, giving the stated list.
Hence only $q=2,3,4,8$ remain as possible parameters for ${}^3D_4(q)$.

It remains to check that the displayed small groups really satisfy the condition. Theorems~3.1 and~3.2 of \cite{Grechkoseeva} give complete descriptions of their spectra by divisors of finitely many maximal orders. Substituting $q=2,3,4$ for $F_4(q)$, the largest number of distinct prime divisors occurring in any of these maximal orders is $3$; for example, maximal-width orders include $30$ for $F_4(2)$, $84$ for $F_4(3)$, and $315$ for $F_4(4)$. Likewise, for ${}^3D_4(q)$ with $q=2,3,4,8$, every maximal order has at most three distinct prime divisors; maximal-width examples are $78$ for $q=3$, $195$ for $q=4$, and $3591$ for $q=8$ (while for $q=2$ the maximum is already $2$). Thus every group in the displayed list satisfies the condition.
\end{proof}

\begin{lem} \label{G_2(q)}
Let $G$ be a finite simple group of type $G_2(q)$, where $q$ is a power of a prime $p$. Then every element order of $G$ is divisible by at most three distinct primes if and only if $q\in B_6$.
\end{lem}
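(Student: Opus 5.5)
The plan is to read the condition off the known spectrum of $G_2(q)$, just as was done for ${\rm PSp}_4(q)$ in even characteristic (Lemma~\ref{PSp4-even}). The spectrum of $G_2(q)$ is determined by the set $\mu(G_2(q))$ of element orders maximal under divisibility, given in the literature cited at the start of this section (\cite{Deng,Vasilev}, or the $G_2$ case in \cite{Grechkoseeva}). I will first recall this list by splitting according to the characteristic, since unipotent elements of order $p^2$ or $p^3$ occur only for small $p$.

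For $p>5$ (and also $p=2$), I expect every element order to divide one of
\[
 q^2-1,\quad q^2-q+1,\quad q^2+q+1,\quad p(q-1),\quad p(q+1),
\]
with each of these occurring as an element order. For $p=2$ there is an additional order $8$ or $4$, and for $p=3,5$ there are extra unipotent orders such as $9$ or $25$ together with products like $p(q\pm1)$. These extra orders contribute only the prime $p$. The "only if" direction is then immediate: each listed number is an element order, so it must have at most three prime divisors, and these are exactly the defining conditions of $B_6$. The clause "if $p=2$ or $p>5$" in $B_6$ should mirror the fact that the mixed orders $p(q\pm1)$ appear in the cited list only in those characteristics. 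For $p=3,5$ the mixed orders present instead are $p$-power multiples of divisors of $q\pm1$, or $p^2$ with nothing else.

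The "if" direction needs the full list: every element order divides some member of $\mu(G)$. Each member of $\mu(G)$ is either one of the numbers constrained in $B_6$ or a pure $p$-power. For $p=3,5$ I must also check that any mixed order such as $p\cdot\frac{q\pm1}{\cdot}$ or $p^2(\dots)$ has prime set contained in $\{p\}\cup\pi(q^2-1)$ in a way already controlled by $|\pi(q^2-1)|\le 3$. This is the point I expect to be the obstacle. If $\mu(G_2(q))$ for $p=3$ contains $3(q\pm1)$ while $B_6$ imposes no condition on $\pi(3(q\pm1))$, then $|\pi(q^2-1)|\le3$ does not by itself bound $|\pi(3(q+1))|$. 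I would resolve this by checking the precise cited statement: for $p=3$ the cross orders should be $3(q\pm1)$ arising from centralizers $SL_2(q)$ or $SU_2(q)$ of elements of order $3$. If they are present, I would either show the extra condition is implied (unlikely in general) or note that the statement as written relies on the exact form of the cited spectrum. In the write-up I would follow the cited description verbatim.

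Finally, the hypothesis $q>2$ in Theorem~\ref{main results} comes from $G_2(2)'\cong\PSU_3(3)$, which is not simple as $G_2(2)$ itself. I will remark that this case is covered by Lemma~\ref{PSU}, so the lemma is stated for the simple groups $G_2(q)$ with $q>2$.
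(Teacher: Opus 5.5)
Your ``only if'' direction is fine. The ``if'' direction, however, has a genuine gap exactly where you suspected, in characteristics $3$ and $5$, and neither of your proposed ways out closes it.

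The mixed orders $p(q\pm1)$ really are element orders for $p=3,5$. For example, $12=3(3+1)\in\omega(G_2(3))$ and $30=5(5+1)\in\omega(G_2(5))$, both coming from $SU_3(q)\le G_2(q)$. So your guess that the clause ``if $p=2$ or $p>5$'' in $B_6$ reflects their absence from the cited list is wrong, and deferring to ``the exact form of the cited spectrum'' proves nothing. (Also, for $p=2$ the extra maximal orders are $8$ and $12$; both are harmless.)

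What is missing is precisely the implication you dismissed as unlikely: for $p\in\{3,5\}$, the condition $|\pi(q^2-1)|\le3$ already forces $|\pi(p(q\pm1))|\le3$. This fails for general odd $p$: for $q=p=127$ one has $\pi(q^2-1)=\{2,3,7\}$ but $|\pi(127\cdot126)|=4$. That is why $B_6$ must impose the condition when $p>5$. For $p=3,5$ it does hold, and this is the heart of the paper's proof.

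The argument is elementary.
\begin{enumerate}[$(1)$]
\item Since $p$ is odd, $\pi(q^2-1)=\pi(q-1)\cup\pi(q+1)$, with $2$ the only common prime.
\item If $|\pi(p(q-1))|\ge4$, then $q-1$ alone has three prime divisors, exhausting $\pi(q^2-1)$. Hence $q+1=2^b$. Symmetrically, $|\pi(p(q+1))|\ge4$ forces $q-1=2^a$.
\item Write $q=p^f$. If $f>1$ is odd, then $p^f\pm1=(p\pm1)c$ with $c$ odd and $c>1$, so neither equation holds.
\item If $f$ is even, then $q\equiv1\pmod 8$ excludes $q+1=2^b$. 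Also $q-1=(r-1)(r+1)$ with $r=p^{f/2}$ forces $r=3$, i.e.\ $q=9$.
\item Thus in the first case $q=p$ and $q-1\in\{2,4\}$. In the second case $q\in\{3,5,9\}$ and $q+1\in\{4,6,10\}$. Neither number has three prime divisors, a contradiction.
\end{enumerate}
The remaining maximal order $p^2$ is a prime power. Together with the argument above, this completes the case $p=3,5$ without any further input from the spectrum.
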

\begin{proof}
Since $G_2(2)$ is not simple, assume $q=p^f>2$. By \cite[Lemma~1.4]{Vasilev}, the maximal element orders are obtained from
\[
q^2-1,\qquad q^2-q+1,\qquad q^2+q+1,
\]
together with $2(q-1)$ and $2(q+1)$ when $p=2$, and $p(q-1)$ and $p(q+1)$ when $p$ is odd; for $p=3,5$ there is also the order $p^2$. Hence the hypothesis immediately gives the conditions in the definition of $B_6$.

Conversely, suppose that $q\in B_6$. If $p=2$ or $p>5$, the conclusion follows from \cite[Lemma~1.4]{Vasilev}. It remains to consider $p=3$ or $5$. Since $|\pi(q^2-1)|\leq3$ and $(q-1,q+1)=2$, if $p(q-1)$ had four distinct prime divisors, then $q-1$ would have three distinct prime divisors and $q+1$ would have to be a power of $2$. Similarly, if $p(q+1)$ had four distinct prime divisors, then $q-1$ would have to be a power of $2$.

We show that neither possibility causes a problem. Suppose first that $q-1=2^a$. If $f$ is odd and $f>1$, then
\[
 q-1=(p-1)(1+p+\cdots+p^{f-1}),
\]
and the second factor is an odd integer greater than $1$, a contradiction. If $f$ is even, write $q=r^2$, where $r=p^{f/2}$. Since $(r-1)(r+1)$ is a power of $2$, the two consecutive even integers $r-1$ and $r+1$ must be $2$ and $4$. Thus $r=3$ and $q=9$. Hence $q=3,5$, or $9$; in these cases $p(q+1)$ has at most three distinct prime divisors. Now suppose that $q+1$ is a power of $2$. If $f$ is even, then $q\equiv1\pmod8$, so $q+1$ cannot be a power of $2$. If $f>1$ is odd, then
\[
 q+1=(p+1)(p^{f-1}-p^{f-2}+\cdots-p+1),
\]
and the second factor is an odd integer greater than $1$, again impossible. Thus $f=1$, and for $p=3,5$ the conclusion is immediate. The remaining maximal order $p^2$ has only one prime divisor. Therefore every element order of $G$ has at most three distinct prime divisors.
\end{proof}

\begin{lem} \label{^2F_4}
Let $G$ be a finite simple group of type ${}^2F_4(q)$, where $q=2^{2n+1}$ and $n\geq1$. Then every element order of $G$ is divisible by at most three distinct primes if and only if $q\in B_9$.
\end{lem}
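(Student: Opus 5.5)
The plan follows the pattern of Lemma~\ref{G_2(q)}. We take a published description of the spectrum of ${}^2F_4(q)$, list its maximal element orders, and read off the arithmetic conditions. The spectrum of ${}^2F_4(q)$, $q=2^{2n+1}$, is determined in the literature (e.g.\ Deng--Shi \cite{Deng}, or the general treatment in \cite{Grechkoseeva}). The set $\mu(G)$ of element orders maximal under divisibility consists of the following numbers:
\[
q^2-1,\quad q^2+1,\quad q^2-q+1,\quad q^2\pm 2^{3n+2}\mp 2^{n+1}-1,\quad q^2\pm2^{3n+2}+q\pm2^{n+1}+1.
\]
The last four are the orders of the cyclic maximal tori containing the factors $q\pm\sqrt{2q}+1$. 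The list also contains mixed orders that involve the prime $2$ or products of small cyclotomic factors. Concretely these are $4(q\pm\sqrt{2q}+1)$, $8(q-1)$ (or $8(q+1)$), $4(q^2-1)$-type orders as given in the source, together with the pure $2$-power orders $16$ and $12$-type orders. We will first state this list precisely, citing the source. Then, since every element order divides some member of $\mu(G)$, we have $\alpha(G)\le3$ if and only if each member of $\mu(G)$ has at most three prime divisors.

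For the forward direction we argue as follows. If $\alpha(G)\le3$, then each of the seven displayed tori orders lies in $\omega(G)$. So each has at most three prime divisors, which is exactly the condition $q\in B_9$.

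For the converse we must check that the remaining maximal orders, those involving $2$, contribute no new prime divisors beyond a set already controlled by $B_9$. Each such order has the form $2^k\cdot m$ with $m$ odd. Here $m$ divides one of $q-1$, $q+1$, $q^2+1$, or $q\pm\sqrt{2q}+1$. The latter two are coprime factors of $q^2+1$, since $q^2+1=(q+\sqrt{2q}+1)(q-\sqrt{2q}+1)$. The point is that $m$ is odd and divides an odd number from the list, namely $q^2-1$ or $q^2+1$. Since $q$ is even, $q^2\pm1$ is odd, so we get $|\pi(2^k m)|\le |\pi(m)|+1$.

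This is where the main obstacle lies. Adding the prime $2$ to a divisor of $q^2-1$ could in principle produce four primes. We therefore need that the relevant $m$ divides only $q-1$, $q+1$, or $q\pm\sqrt{2q}+1$, and not all of $q^2-1$. We would then bound $|\pi(q\pm1)|$ and $|\pi(q\pm\sqrt{2q}+1)|$ by $2$ using the $B_9$ conditions. The tool is coprimality: $(q-1,q+1)=1$, and $3\mid q+1$ because $q$ is an odd power of $2$. Also $5\mid q^2+1$, and indeed $5$ divides exactly one of $q\pm\sqrt{2q}+1$. From these, $|\pi(q^2-1)|\le3$ forces $|\pi(q-1)|\le2$ whenever $q+1>3$, and similarly for the two factors of $q^2+1$. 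Hence each mixed order has at most $2+1=3$ primes.

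If the source lists a mixed order such as $2(q^2-1)$ outright, this argument fails. In that case we would check whether that order is genuinely in the spectrum. If it is, we would add the corresponding condition and verify that it is implied by, or must be appended to, $B_9$.
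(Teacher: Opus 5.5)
Your plan has the same structure as the paper's proof: read $\mu(G)$ off \cite{Deng}, get the ``only if'' direction from the seven odd torus orders, and control the $2$-singular orders by coprimality. Your arithmetic is correct. The gap is that the ``if'' direction depends entirely on the exact list of $2$-singular maximal orders, and you never fix it. Your tentative list ($8(q\pm1)$, ``$4(q^2-1)$-type orders'') is not the spectrum. Your reduction also allows the odd part $m$ to be all of $q^2-1$ or $q^2+1$. Your fallback of appending conditions to $B_9$ would prove a different statement.

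This is not a formality. For $q=32$ all seven conditions of $B_9$ hold, while $q^2-1=3\cdot11\cdot31$. So an element of order $2(q^2-1)$ would falsify the lemma. What you need is the precise content of \cite[Lemma~3]{Deng}, as used in the paper: apart from divisors of the seven odd integers in $B_9$, the element orders are the divisors of $2(q+1)$, $4(q-1)$, $4(q+\sqrt{2q}+1)$, $4(q-\sqrt{2q}+1)$, $12$ and $16$. Hence every $2$-singular element has odd part dividing $q+1$, $q-1$, or one of $q\pm\sqrt{2q}+1$ (recall $3\mid q+1$). With this, your argument closes: $(q-1,q+1)=1$ and both factors exceed $1$, so $|\pi(q\pm1)|\le2$. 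Therefore $2(q+1)$ and $4(q-1)$ have at most three prime divisors.

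For $4(q\pm\sqrt{2q}+1)$ your route differs from the paper's and is slightly simpler. You use $q^2+1=(q+\sqrt{2q}+1)(q-\sqrt{2q}+1)$. The two factors are odd and differ by a power of $2$, hence are coprime, and both exceed $1$ since $q\ge8$. So $|\pi(q^2+1)|\le3$ forces each factor to have at most two prime divisors. The paper instead proves $(q-1,q\pm\sqrt{2q}+1)=1$ and invokes the $B_9$ conditions on the products $(q-1)(q\pm\sqrt{2q}+1)=q^2\pm2^{3n+2}\mp2^{n+1}-1$. Both are valid; yours needs only the condition on $q^2+1$.

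Two small corrections. First, the integers $q^2\pm2^{3n+2}+q\pm2^{n+1}+1$ are the two factors of $q^4-q^2+1$ and do not involve $q\pm\sqrt{2q}+1$; that description fits $(q-1)(q\pm\sqrt{2q}+1)$ instead. Second, \cite{Grechkoseeva} covers $F_4(q)$ and ${}^3D_4(q)$, not ${}^2F_4(q)$, so \cite{Deng} is the source to cite.
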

\begin{proof}
The spectrum of ${}^2F_4(q)$ is given in \cite[Lemma~3]{Deng}. The seven odd quantities appearing in the definition of $B_9$ occur among the maximal element orders, so $\alpha(G)\leq3$ implies $q\in B_9$.

Conversely, suppose that $q\in B_9$ and put $s=\sqrt{2q}=2^{n+1}$. Besides divisors of the seven integers in the definition of $B_9$, the spectrum contains only the orders
\[
2(q+1),\qquad 4(q-1),\qquad 4(q+s+1),\qquad 4(q-s+1),
\]
and the orders $2,4,8,12,16$. The first two have at most three distinct prime divisors because $|\pi(q^2-1)|\leq3$. For the other two, observe that
\[
(q-1,q+s+1)=(q-1,q-s+1)=1.
\]
Indeed, an odd common divisor would give $q\equiv1$ and $s\equiv\mp2$ modulo that divisor, whereas $s^2=2q$, and hence it would divide $2$. Moreover,
\[
(q-1)(q+s+1)=q^2+qs-s-1,
\]
\[
(q-1)(q-s+1)=q^2-qs+s-1,
\]
and $qs=2^{3n+2}$. These are two of the integers occurring in the definition of $B_9$. Since $q-1>1$ and the two factors in each product are coprime odd integers, each of $q+s+1$ and $q-s+1$ has at most two distinct prime divisors. It follows that $4(q\pm s+1)$ has at most three. Thus every element order of $G$ has at most three distinct prime divisors.
\end{proof}

\noindent {\bf Remark.} The exceptional simple group ${}^2F_4(2)'$ must be handled separately, since ${}^2F_4(2)$ itself is not simple. The \emph{Atlas} gives the element orders
\[
1,2,3,4,5,6,8,10,12,13,16,
\]
so every element order is divisible by at most three distinct primes. \\

\begin{proof}[Proof of Theorem~\ref{main results}]
Let $S$ be a nonabelian composition factor of $G$, say $S=H/K$ with $K\lhd H\leq G$. If $xK\in H/K$, then $o(xK)$ divides $o(x)$. Hence
\[
\alpha(S)\leq \alpha(H)\leq \alpha(G)\leq3.
\]
Thus it is enough to determine the nonabelian finite simple groups $S$ with $\alpha(S)\leq3$. By the classification of finite simple groups, it remains to consider alternating groups, sporadic groups, and groups of Lie type.

First consider the alternating groups ${\rm A}_n$. The permutation with disjoint cycle lengths $2,2,3,5,7$ lies in ${\rm A}_{19}$ and has order $210$. Hence ${\rm A}_n$ contains an element whose order is divisible by four distinct primes for every $n\geq19$. Thus only $5\leq n\leq18$ can occur.

For the sporadic simple groups, Inspection of all conjugacy-class orders in the \emph{Atlas of Finite Groups} \cite{CFSG} shows that every element order in each of the $26$ sporadic simple groups is divisible by at most three distinct primes. Hence all sporadic simple groups occur in part~(2).

It remains to consider groups of Lie type. Lemmas~\ref{PSL}--\ref{^2F_4} give the necessary restrictions for the families treated there. For the Suzuki groups, \cite[Theorem~2]{Shi2} gives
\[
 \omega(\Sz(2^{2n+1}))=\{2,4,\ \text{all divisors of }2^{2n+1}-1,
 2^{2n+1}-2^{n+1}+1,
 2^{2n+1}+2^{n+1}+1\},
\]
so the condition is precisely $2^{2n+1}\in B_7$. Similarly, \cite[Lemma~4]{BrandlShi} gives
\[
 \omega({}^2G_2(3^{2n+1}))=\{1,2,3,6,9,\ \text{all divisors of }
 3^{2n+1}\pm3^{n+1}+1,
 3^{2n+1}-1,
 \tfrac{3^{2n+1}+1}{2}\},
\]
and hence the condition is precisely $3^{2n+1}\in B_8$. For $n=0$, the derived subgroup ${}^2G_2(3)'\cong {\rm PSL}_2(8)$ is already covered by the linear case.

\end{proof}

\section{Prime divisors of simple groups of width $3$}

\begin{thm}\label{simpleresults2}
Let $G$ be a nonabelian simple group such that $\alpha(G)\leq3$. Then $\rho(G)\leq15$.
\end{thm}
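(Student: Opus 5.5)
We go through the list in Theorem~\ref{main results}. Each nonabelian simple $G$ with $\alpha(G)\leq3$ appears there, so it suffices to bound $\rho(S)$ for every $S$ on the list. The list has two kinds of entries. The first kind consists of finitely many explicitly named groups: ${\rm A}_n$ with $5\le n\le18$, the $26$ sporadic groups, and the finitely many named classical and exceptional groups. For these, $\rho(S)$ is read off from the known order formulas. The largest values come from the Monster, with $\rho({\rm M})=15$ (primes $2,3,5,7,11,13,17,19,23,29,31,41,47,59,71$), and from ${\rm B}$ and the larger classical groups over ${\rm GF}(2)$, ${\rm GF}(3)$, ${\rm GF}(4)$. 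For example, $\PSU_9(2)$ and $\Omega^+_{14}(2)$ each have only a handful of primes; these are checked one by one, and none exceeds $15$.

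The second kind consists of the infinite parametric families: $\PSL_2(q)$ with $q\in B_1\cup B_2$, $\PSL_3(q)$, $\PSU_3(q)$, ${\rm PSp}_4(q)$, $G_2(q)$, $\Sz(q)$, ${}^2G_2(q)$ and ${}^2F_4(q)$. Here the point is that $|S|$ is $q^N$ times a product of a few cyclotomic-type factors, each of which is, up to a bounded divisor, an element order. Hence $\pi(S)$ is contained in the union of $\{p\}$ with the prime sets of the numbers constrained in the definition of $B_i$. Concretely:
\begin{itemize}
\item For $\PSL_2(q)$, $|S|=q(q^2-1)/(2,q-1)$, and $\pi(S)\subseteq\{p\}\cup\pi(q-1)\cup\pi(q+1)$. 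In odd characteristic this is $\{2,p\}\cup\pi(\frac{q-1}2)\cup\pi(\frac{q+1}2)$, giving at most $1+1+3+3$ primes, and sharpening using the fact that $2$ divides one of the halves gives at most $7$.
\item For $\PSL_3(q)$, $\pi(S)\subseteq\pi(p(q-1))\cup\pi(q^2-1)\cup\pi(q^2+q+1)$, giving at most $1+3+3$ plus possibly $3$.
\item For $\Sz(q)$, $\pi(S)\subseteq\{2\}\cup\pi(q-1)\cup\pi(q+2^{n+1}+1)\cup\pi(q-2^{n+1}+1)$, which is at most $1+3+3+3=10$.
\item For ${}^2F_4(q)$, $|S|=q^{12}(q^6+1)(q^4-1)(q^3+1)(q-1)$. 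Factoring $q^6+1=(q^2+1)(q^4-q^2+1)$ and $q^4-q^2+1=(q^2+\sqrt{2q}q+q+\sqrt{2q}+1)(q^2-\sqrt{2q}q+q-\sqrt{2q}+1)$, the primes lie in $\{2\}\cup\pi(q^2-1)\cup\pi(q^2+1)\cup\pi(q^2-q+1)$ together with the two last $B_9$ factors, giving at most $1+3+3+3+3+3=16$. This needs to be cut by one, for instance by observing that $3$ divides both $q+1$ and $q^2-q+1$ (since $q\equiv2\pmod3$), so the union has at most $15$ primes.
\end{itemize}
The remaining families are handled the same way. For $G_2(q)$ the bound is $\{p\}\cup\pi(q^2-1)\cup\pi(q^2\pm q+1)$ with $3$ shared, so at most $1+3+3+3-1$. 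For ${\rm PSp}_4(q)$ the bound is $\pi(p(q^2-1))\cup\pi(q^2+1)$. For ${}^2G_2$ and $\PSU_3$ the argument is analogous.

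The main obstacle is the family with the most factors, namely ${}^2F_4(q)$, and to a lesser extent $G_2(q)$ and $\Sz(q)$. There the naive count adds up the three-prime allowances of every factor, and I must find coincidences among the primes to bring the total down to $15$. What I would try first is the standard gcd computations between the cyclotomic factors (for example, $3$ dividing $q+1$ and $q^2-q+1$, and $2$ or $3$ dividing $q^2-1$ in every odd case, via Lemmas~\ref{lemma 3} and \ref{lemma 4}) to force the shared small primes. If the count for ${}^2F_4$ still exceeds $15$, I would instead use the fact that the products appearing in $B_9$ themselves bound $\pi(q+s+1)$ and $\pi(q-s+1)$ by two primes each, as in the proof of Lemma~\ref{^2F_4}. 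That gives at most $1+3+3+3+2+2=14$ once overlaps are accounted for. Taking the maximum over all cases then yields $\rho(G)\le15$, attained by ${\rm M}$.
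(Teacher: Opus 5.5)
Your proposal is correct and follows essentially the same route as the paper. You run through the classification of Section 2, treat the named groups by direct computation from their orders (maximum $15$, attained by ${\rm M}$), and bound $\pi(G)$ for the parametric families by $\{p\}$ together with the prime sets of the numbers defining the $B_i$. The decisive ${}^2F_4(q)$ case is exactly the paper's count $1+5\cdot 3-1=15$, using that $3$ divides both $q^2-1$ and $q^2-q+1$.

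Two small slips are harmless for the bound $15$:
\begin{enumerate}
\item For $G_2(q)$ with $p=3$ the prime $3$ is not shared, so your bound should be $10$ (as in the paper) rather than $9$.
\item Your fallback for ${}^2F_4(q)$ is unnecessary and also miscounted. Splitting $q^2+1=(q+s+1)(q-s+1)$ into $2+2$ primes is weaker than the direct bound $3$ from $B_9$, and the tally $1+3+3+3+2+2$ drops one of the two factors of $q^4-q^2+1$.
\end{enumerate}
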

\begin{proof}
By Theorem~\ref{main results}, it is enough to consider the groups in the classification obtained in the preceding section.

If $G\cong A_n$, where $5\leq n\leq18$, then
\[
 \pi(G)\subseteq\{2,3,5,7,11,13,17\},
\]
so $\rho(G)\leq7$. For the sporadic groups in Theorem~\ref{main results}(2), the orders in the \emph{Atlas} \cite{CFSG} give $\rho(G)\leq15$; equality occurs for the Monster. For the finite lists in parts (3)--(8), (10), and (13), direct substitution in the standard order formulas gives
\[
\begin{array}{c|c|c}
\text{family} & \text{largest value of }\rho & \text{an extremal group}\\ \hline
\text{linear} & 8 & \PSL_6(4)\\
\text{unitary} & 8 & \PSU_9(2)\\
\text{symplectic} & 8 & {\rm PSp}_{10}(3)\\
\text{orthogonal} & 9 & {\rm P}\Omega^-_{12}(3),\ \Omega^+_{14}(2)\\
\text{exceptional} & 8 & E_6(2),\ {}^2E_6(2)
\end{array}
\]
Thus every group in these finite lists has $\rho(G)\leq9$. It remains only to consider the infinite families.

Suppose first that $G\cong\PSL_2(q)$ (equivalently, $\PSU_2(q)$). Put $d=(2,q-1)$. Then
\[
 \pi(G)=\{p\}\cup\pi\!\left(\frac{q-1}{d}\right)
 \cup\pi\!\left(\frac{q+1}{d}\right).
\]
The definitions of $B_1$ and $B_2$ therefore give $\rho(G)\leq7$.

If $G\cong\PSL_3(q)$ and $q\in B_3$, then
\[
 \pi(G)\subseteq\{p\}\cup\pi(q^2-1)
 \cup\pi\!\left(\frac{q^2+q+1}{(3,q-1)}\right),
\]
and hence $\rho(G)\leq7$. The same argument for $\PSU_3(q)$, using $B_4$, gives $\rho(G)\leq7$.

Let $G\cong\operatorname{PSp}_4(q)$. Since
\[
 |G|=\frac{q^4(q^2-1)(q^4-1)}{(2,q-1)},
\]
we have
\[
 \pi(G)=\{p\}\cup\pi(q^2-1)\cup\pi(q^2+1).
\]
If $q$ is odd, the conditions defining $B_5$ give $\rho(G)\leq7$; if $q$ is even, the same conclusion follows from $B_{5,2}$. The isomorphic orthogonal case $\Omega_5(q)$ has the same bound.

Now let $G\cong G_2(q)$ with $q\in B_6$. Since
\[
 |G|=q^6(q^6-1)(q^2-1)
     =q^6(q^2-1)^2(q^2-q+1)(q^2+q+1),
\]
we obtain
\[
 \pi(G)=\{p\}\cup\pi(q^2-1)\cup\pi(q^2-q+1)\cup\pi(q^2+q+1).
\]
Thus $\rho(G)\leq10$.

Let $G\cong\Sz(q)$, where $q\in B_7$, and put $s=\sqrt{2q}$. Since
\[
 |G|=q^2(q-1)(q^2+1),\qquad
 q^2+1=(q+s+1)(q-s+1),
\]
the definition of $B_7$ gives $\rho(G)\leq10$.

Let $G\cong{}^2G_2(q)$, where $q=3^{2n+1}\in B_8$, and put $s=3^{n+1}$. We have
\[
 q^2-q+1=(q+s+1)(q-s+1)
\]
and
\[
 |G|=q^3(q-1)(q+1)(q^2-q+1).
\]
The four integers occurring in the definition of $B_8$ account for all primes outside the defining prime $3$. Moreover, $q-1$ and $(q+1)/2$ are both even. Hence $\rho(G)\leq12$.

Finally, let $G\cong{}^2F_4(q)$ with $q\in B_9$, and put $s=\sqrt{2q}$. We have
\[
 |G|=q^{12}(q^6+1)(q^4-1)(q^3+1)(q-1),
\]
where
\[
 q^4-1=(q-1)(q+1)(q^2+1),\qquad
 q^3+1=(q+1)(q^2-q+1),
\]
and
\[
 q^6+1=(q^2+1)(q^4-q^2+1).
\]
Since $s^2=2q$,
\[
 q^4-q^2+1=(q^2+qs+q+s+1)(q^2-qs+q-s+1).
\]
Thus every odd prime divisor of $|G|$ divides one of
\[
 q^2-1,\quad q^2+1,\quad q^2-q+1,\quad
 q^2+qs+q+s+1,\quad q^2-qs+q-s+1.
\]
These five integers occur among the seven integers in the definition of $B_9$, and each has at most three distinct prime divisors. Since $q\equiv2\pmod 3$, the prime $3$ divides both $q^2-1$ and $q^2-q+1$. Thus the union of the prime divisors of these five odd integers has size at most $5\cdot3-1=14$. Adding the defining prime $2$, we obtain
\[
 \rho(G)\leq 15.
\]
Therefore $\rho(G)\leq15$ in every case. The bound is attained by the Monster.
\end{proof}

\section{Prime divisors of finite groups of width $3$}

The following lemma gives the bound needed for the nonsolvable sections in the proof of the main theorem.

\begin{lemma}\label{lem:semisimple-rho}
Let $B$ be a direct product of nonabelian simple groups. If $\alpha(B)\leq 3$, then $\rho(B)\leq 15$.
\end{lemma}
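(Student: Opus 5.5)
The plan is to reduce to the case of one simple factor and to show that when there are several factors, the factors are forced to share almost all of their primes. Write $B=S_1\times\cdots\times S_k$ with each $S_i$ nonabelian simple, so that $\pi(B)=\bigcup_i\pi(S_i)$. Each $S_i$ is isomorphic to a subgroup of $B$, so $\alpha(S_i)\leq\alpha(B)\leq3$. If $k=1$, Theorem~\ref{simpleresults2} gives $\rho(B)\leq15$ directly. From now on assume $k\geq2$.

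The basic tool is that elements of different direct factors commute, so the order of a product is the least common multiple of the orders of its components. Fix $i\neq j$ and a prime $q\in\pi(S_j)$, and choose $y\in S_j$ of order $q$ (Cauchy). For every $x\in S_i$ we get
\[
\pi(o(xy))=\pi(o(x))\cup\{q\}.
\]
Hence, if $q\notin\pi(S_i)$, every element order of $S_i$ has at most two distinct prime divisors, i.e.\ $\alpha(S_i)\leq2$. Apply this to each pair of factors. Call a factor $S_i$ \emph{small} if some $S_j$ has a prime not dividing $|S_i|$; otherwise $\pi(S_j)\subseteq\pi(S_i)$ for all $j$.

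This gives two cases. If some factor $S_{i_0}$ is not small, then $\pi(B)=\pi(S_{i_0})$, and Theorem~\ref{simpleresults2} yields $\rho(B)\leq15$. Otherwise every factor is small, so every $S_i$ satisfies $\alpha(S_i)\leq2$. By the classification of \cite{Li}, each such $S_i$ lies in the explicit list of simple groups with $\alpha\leq2$, and for these $\rho(S_i)$ is small (at most about $7$; I would extract the exact bound from the $\alpha\leq2$ classification).

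In this second case I would refine the pairing argument. Since $\alpha(S_i)\leq2$ while $\alpha(B)\leq3$, pick in one factor an element whose order involves two primes, and in another factor an element of prime order outside those two. Tracking which new primes each factor may contribute should show that at most one prime outside a fixed factor's set can occur overall. This gives $\rho(B)\leq\rho(S_1)+1$, comfortably below $15$.

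The main obstacle is this second case. It is not automatic that some factor has an element of order divisible by two primes; for example, ${\rm A}_5$ has only prime element orders. So there can be two factors, each with $\alpha=1$ or with two-prime elements only for awkward prime pairs. I would handle this by a direct check against the short list of simple groups with $\alpha(S)\leq2$ from \cite{Li}. There I would use that $2$ divides every $|S_i|$, and that a prime of order $q$ from one factor can be combined with an element of order $2r$ or $pr$ from another factor. If needed, I would fall back on the crude bound $\rho(B)\leq\rho(S_1)+\rho(S_2)$ restricted to that list, which I expect still stays at most $15$.
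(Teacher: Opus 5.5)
Your first case (some factor whose prime set contains those of all the others) is fine; it is the paper's $s=1$ case. The second case has a genuine gap. Knowing only that every factor satisfies $\alpha(S_i)\le 2$ does not bound how many factors contribute new primes.

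Moreover, the specific bound you aim for, $\rho(B)\le\rho(S_1)+1$ (``at most one prime outside a fixed factor's set''), is false. In $\PSL_2(7)\times\Sz(32)$ both factors have only prime-power element orders, so $\alpha(B)=2$ and both factors are ``small'' in your sense. Yet $\pi(B)=\{2,3,5,7,31,41\}$ has two primes outside $\pi(\Sz(32))$ and three outside $\pi(\PSL_2(7))$.

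Your fallback $\rho(S_1)+\rho(S_2)$ presupposes that two factors already cover $\pi(B)$, and nothing in your argument shows that. Suppose three factors are needed and each is only known to satisfy $\alpha\le 2$, hence $\rho\le 8$ by \cite[Theorem~2.3]{ShuYang}. The crude count is then $3\cdot 8-2=22$, which is too large.

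The missing idea is to pass to a minimal subfamily $M_1,\dots,M_s$ of factors with $\bigcup_i\pi(M_i)=\pi(B)$. Minimality gives each $M_i$ a private prime $q_i$ that does not divide $|M_j|$ for $j\ne i$. Multiply an arbitrary $x\in M_i$ by elements of order $q_j$ taken from \emph{all} the other $M_j$ simultaneously, not from just one other factor as in your pairwise argument. This gives $|\pi(o(x))|+s-1\le 3$, that is, $\alpha(M_i)\le 4-s$, and in particular $s\le 3$.

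For $s=2$ you get $\rho(B)\le 8+8-1=15$, using that $2$ divides every $|M_i|$. The available bound for simple groups with $\alpha\le2$ is $8$, not ``about $7$'', so the shared prime $2$ is genuinely needed to reach $15$. For $s=3$ the same inequality forces $\alpha(M_i)\le 1$, so $\rho(M_i)\le 4$ by \cite[Theorem~2.5]{ShuYang} and $\rho(B)\le 12$. The pairwise ``small'' argument alone yields only $\alpha\le 2$ and cannot reach this.
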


\begin{proof}
Write
\[
 B=T_1\times\cdots\times T_t,
\]
where each $T_j$ is nonabelian simple. Choose a subset $I\subseteq\{1,\ldots,t\}$ of smallest possible size such that
\[
 \pi\!\left(\prod_{j\in I}T_j\right)=\pi(B).
\]
Relabel the factors indexed by $I$ as $M_1,\ldots,M_s$, and set
\[
 C=M_1\times\cdots\times M_s.
\]
Then $\pi(C)=\pi(B)$, while every proper subproduct of $C$ misses at least one prime divisor of $|B|$.

For each $i$, minimality gives a prime
\[
 q_i\in \pi(M_i)\setminus\bigcup_{j\ne i}\pi(M_j).
\]
These primes are distinct. Fix $i$ and let $x\in M_i$. For every $j\ne i$, choose an element $x_j\in M_j$ of order $q_j$. Because the factors commute, the resulting element of $C$ has prime-divisor set
\[
 \pi(o(x))\cup\{q_j:j\ne i\}.
\]
These sets are disjoint, so
\[
 |\pi(o(x))|+s-1\leq \alpha(B)\leq 3.
\]
Since $x$ was arbitrary,
\[
 \alpha(M_i)\leq 4-s.
\]
In particular, $s\leq 3$.

If $s=1$, then $\rho(B)=\rho(M_1)\leq 15$ by Theorem~3.1. If $s=2$, then $\alpha(M_1),\alpha(M_2)\leq 2$, so \cite[Theorem~2.3]{ShuYang} gives $\rho(M_1),\rho(M_2)\leq 8$. Since every nonabelian finite simple group has even order, the prime $2$ belongs to both $\pi(M_1)$ and $\pi(M_2)$. Hence
\[
 \rho(B)=\rho(C)\leq 8+8-1=15.
\]
If $s=3$, then $\alpha(M_i)\leq 1$ for $i=1,2,3$, and \cite[Theorem~2.5]{ShuYang} gives $\rho(M_i)\leq 4$. Hence
\[
 \rho(B)=\rho(C)\leq 12.
\]
Thus $\rho(B)\leq 15$ in all cases.
\end{proof}

\begin{theorem}\label{main-theorem-final}
Let $G$ be a finite group such that $\alpha(G)\leq 3$. Then $\rho(G)\leq 192$.
\end{theorem}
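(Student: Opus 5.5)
We need a plan for proving $\rho(G)\le 192$. The natural route follows Hung--Yang and Shu--Yang: take the solvable radical and bound its primes via Keller's linear bound, then control the nonsolvable part with Lemma~\ref{lem:semisimple-rho}. We have no exact constants from those references, so the plan keeps the structure explicit.

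Let $R$ be the solvable radical of $G$. Then $\pi(G)=\pi(R)\cup\pi(G/R)$, so it suffices to bound $\rho(R)$ and $\rho(G/R)$ separately. Both $R$ and $G/R$ inherit $\alpha\le 3$, since element orders in sections divide element orders in $G$.

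For the solvable part, we use the sharp linear bound for solvable groups. Keller's theorem gives $\rho(R)\le c\,\alpha(R)+d$ with explicit constants; as I recall, the form is $\rho\le 4\alpha+$const. Substituting $\alpha=3$ gives a fixed small number $N_1$. If Keller's constants are not convenient, we would instead use Zhang's quadratic bound for solvable groups, or the Hung--Yang bound applied to $R$, together with structural reduction: the Fitting series, and the fact that a solvable group with $\alpha\le 3$ has bounded Fitting height in terms of the number of primes.

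For the nonsolvable quotient $\bar G=G/R$, let $B=\mathrm{Soc}(\bar G)$. Since $R$ is the solvable radical, $\bar G$ has no nontrivial solvable normal subgroups, so $B$ is a direct product of nonabelian simple groups and $C_{\bar G}(B)=1$. Lemma~\ref{lem:semisimple-rho} gives $\rho(B)\le 15$. Since $\bar G$ embeds in $\mathrm{Aut}(B)$, we then need to bound the primes in $\bar G/B\le \mathrm{Out}(B)$. Write $B=\prod_i T_i^{k_i}$ with distinct isomorphism types $T_i$. Then $\mathrm{Out}(B)=\prod_i \mathrm{Out}(T_i)\wr S_{k_i}$.

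\begin{itemize}
\item The image $P$ of $\bar G$ in the product of the symmetric groups $S_{k_i}$ satisfies $\alpha(P)\le 3$. The main point is that $k_i$ itself is bounded: $\alpha(T_i^{k_i})\le 3$ and $\rho(T_i)\ge 3$, and commuting elements of coprime prime orders in distinct factors combine. By the argument of Lemma~\ref{lem:semisimple-rho}, at most three types occur, but the multiplicities $k_i$ may still be large. So we bound $\rho(P)$ differently. $P$ acts on the factors, and we use induction on $\rho$ by applying the whole argument recursively to $P$, which is a proper section with $\alpha\le 3$. The cleaner route is to bound $\rho(G)$ by the number of primes occurring in a chain of sections $R$, $B$, $\prod\mathrm{Out}(T_i)$, and the permutation part, where each term is either solvable (Keller) or semisimple (Lemma~\ref{lem:semisimple-rho}).
\item $\prod\mathrm{Out}(T_i)^{k_i}$ is solvable with $\alpha\le 3$, so Keller's bound applies again.
\end{itemize}

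Adding the contributions gives a total of the shape $N_1+15+N_2+\rho(P)$. The permutation group $P$ is handled by iterating the same decomposition: its solvable radical, socle, and so on. The iteration terminates because $|P|<|G|$.

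The main obstacle is obtaining a uniform constant, 192, from this recursion instead of unbounded growth. I would try to avoid recursion altogether in the following way. Pick an element $x$ that is a product of elements of prime order lying in distinct direct factors of $B$, supported on primes not occurring in $P$. This forces $\alpha(P)+\alpha(B)$-type inequalities, much like the proof of Lemma~\ref{lem:semisimple-rho}. With that in hand, the primes of the permutation part are limited by a solvable-type bound with $\alpha\le 2$ or less.

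The final constant 192 should then come out as a sum of a few Keller-type linear bounds and the number 15. The exact bookkeeping would be to choose the arrangement that yields exactly this number.
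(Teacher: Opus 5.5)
There is a genuine gap: nothing above the first socle is bounded. Your first layer is fine: $\rho(R)$ comes from a solvable bound, and $\rho(B)\le15$ from Lemma~\ref{lem:semisimple-rho}. But the recursion on the permutation image $P$ only terminates; it gives no constant. Each pass adds another solvable block and another socle block, and nothing limits the number of passes.

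The proposed shortcut does not close this, for three reasons.
\begin{itemize}
\item A lift of an element of $P$ need not commute with the elements of $B$ you want to pair it with. So prime sets do not add as they do for commuting direct factors in Lemma~\ref{lem:semisimple-rho}.
\item Nothing forces $B$ to have primes outside $\pi(P)$: in $A_5\wr A_5$ one has $\pi(B)=\pi(P)=\{2,3,5\}$.
\item $P$ need not be solvable (again $A_5\wr A_5$), so no ``solvable-type bound with $\alpha\le2$'' applies to it.
\end{itemize}
In addition, the solvable input is left as an unspecified linear bound, and the constant is deferred to bookkeeping that is never done. So $192$ is not reached.

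The missing idea is to skip $\mathrm{Out}(B)$ entirely. Instead, run the radical/socle decomposition through the whole group,
\[
1=S_0\le R_1<S_1\le R_2<\cdots<S_m\le G,
\]
where $R_{i+1}/S_i$ is the solvable radical of $G/S_i$ and $S_i/R_i$ is the socle of $G/R_i$. Then bound the \emph{number} of layers using a solvable subgroup.
\begin{itemize}
\item Each solvable section has $\rho\le8$ by Keller's bound for solvable groups with $\alpha\le3$; you need this exact value.
\item Each socle section has $\rho\le15$ by Lemma~\ref{lem:semisimple-rho}.
\item The paper takes from Hung--Yang a solvable $H\le G$ with $\rho(H)=m$. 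Hence $m\le8$ and $\rho(G)\le8+8(15+8)=192$.
\end{itemize}

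When you write this up, count only the layers $S_i/S_{i-1}$ (and $G/S_m$) that contain a prime not dividing $|S_{i-1}|$. The full series can be arbitrarily long under $\alpha\le3$: iterated wreath products $A_5\wr A_5\wr\cdots\wr A_5$ have $\rho=3$. So the paper's $m$ has to be read as the number of such layers.

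For the counted layers, a Frattini argument produces the subgroup. Suppose $N_1<\cdots<N_t$ are normal in $G$ with $p_j\mid|N_j|$ and $p_j\nmid|N_{j-1}|$. Take $Q_1\in\mathrm{Syl}_{p_1}(N_1)$ and $M=N_G(Q_1)$. Then $G=N_1M$, so $|N_j:N_j\cap M|=|N_1:N_1\cap M|$. This index is prime to $p_j$ for $j\ge2$. Induction in $M/Q_1$ then produces a solvable subgroup whose order is divisible by $p_1,\dots,p_t$.

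Hence $t\le8$, and each counted layer adds at most $8+15$ new primes. This gives $\rho(G)\le 8\cdot23\le192$.
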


\begin{proof}
Let $G$ satisfy $\alpha(G)\leq3$. Hung and Yang, following Moret\'o, construct in the proof of \cite[Theorem~3.3]{HungYang} a series
\[
1=S_0\leq R_1<S_1<\cdots<R_m<S_m\leq R_{m+1}=G
\]
such that $R_{i+1}/S_i$ is the largest normal solvable subgroup of $G/S_i$ and $S_i/R_i$ is the socle of $G/R_i$. In particular, each $S_i/R_i$ is a direct product of nonabelian simple groups. The same construction gives a solvable subgroup $H\leq G$ with $\rho(H)=m$. Since $\alpha(H)\leq3$, the solvable case gives
\[
 m=\rho(H)\leq8
\]
by \cite[p.~648]{KellerSmall}.

If $K/L$ is any section of $G$ and $xL\in K/L$, then $o(xL)$ divides $o(x)$. Hence every section appearing above also has element-order width at most $3$. Therefore Lemma~\ref{lem:semisimple-rho} gives
\[
 \rho(S_i/R_i)\leq 15,
\]
while the solvable case gives
\[
 \rho(R_i/S_{i-1})\leq 8
 \qquad\text{and}\qquad
 \rho(G/S_m)\leq 8.
\]
Finally, every prime divisor of $|G|$ occurs in one of the successive sections of the above series. Hence
\[
\pi(G)\subseteq \pi(G/S_m)\cup
\bigcup_{i=1}^m\bigl(\pi(S_i/R_i)\cup\pi(R_i/S_{i-1})\bigr),
\]
and therefore
\[
\begin{aligned}
\rho(G)
&\leq \rho(G/S_m)+
\sum_{i=1}^m\bigl(\rho(S_i/R_i)+\rho(R_i/S_{i-1})\bigr)\\
&\leq 8+m(15+8)\leq 8+8(15+8)=192.
\end{aligned}
\]
\end{proof}

%\section{Acknowledgement} \label{sec:Acknowledgement}
\section*{Acknowledgements.}
This work was partially supported by a grant from the Simons Foundation (\#918096, to YY).

\section*{Disclosure Statement}
The authors declare that they have no competing interests.

\section*{Data Availability Statement} Data sharing is not applicable to this article, as no data sets were generated or analysed during the current study.


\begin{thebibliography}{19}

\bibitem{BrandlShi} R.\,Brandl and W,\,J.\,Shi, A characterization of finite simple groups with abelian Sylow $2$-subgroups, \textit{Ric.\,Mat.} \textbf{42} (1993), 193-198.

\bibitem{BCM} Y. Bugeaud, Z.\,F. Cao, and  M. Mignotte, On simple $K_4$-groups, \textit{J. Algebra} \textbf{241} (2001), 658-668.

\bibitem{Buturlakin1} A.\,A. Buturlakin, Spectra of finite linear and unitary groups, \textit{Algebra and Logic} \textbf{47} (2008), 91–99.

\bibitem{Buturlakin2}
 A.\,A. Buturlakin, Spectra of finite symplectic and orthogonal groups, \textit{Siberian Adv. Math.},  \textbf{21} (2011), 176–210.

\bibitem{Buturlakin3} A.\,A. Buturlakin, Spectra of finite simple groups $E_6(q)$ and ${}^2 E_6(q)$, \textit{Algebra and Logic} \textbf{52}, (2013) 188-202.

\bibitem{Buturlakin4} A.\,A. Buturlakin, Spectra of finite simple groups $E_7(q)$, \textit{Sib. Math. J.}, \textbf{57} (2016), 769-777.

\bibitem{Buturlakin5}
A.\,A. Buturlakin, Spectra of groups $E_8(q)$, \textit{Algebra and Logic}  \textbf{57} (2018), 1–8.



\bibitem{CFSG} {J.\,H. Conway, R.\,T. Curtis, S.\,P. Norton, R.\,A. Parker, and R.\,A. Wilson}, Atlas of Finite Groups, Oxford Univ. Press, New York, 1985.

\bibitem{Deng} H.\,W. Deng and W.\,J. Shi, The characterization of Ree groups ${}^2 F_4(q)$ by their element
orders, \textit{J. Algebra} \textbf{217} (1999), 180-187.

\bibitem{Grechkoseeva} M.\,A. Grechkoseeva and M.\,A. Zvezdina, On spectra of automorphic extensions of finite
simple groups $F_4(q)$ and ${}^3 D_4(q)$, \textit{J. Alg. Appl.} \textbf{15} (2016), 1650168.

\bibitem{HungYang} N.\,N. Hung and Y. Yang, On the prime divisors of element orders, \textit{Math. Nachr.} \textbf{294} (2021), 1905-1911.

\bibitem{Kantor} W.\,M. Kantor and \'{A}. Seress, Large element orders and the characteristic of Lie-type simple groups, \textit{J. Algebra} \textbf{322} (2022), 802-832.

\bibitem{KellerSmall}
T.\,M. Keller, Solvable groups with a small number of prime divisors in the element orders, \textit{J. Algebra} \textbf{170} (1994), 625--648.

\bibitem{Keller4}
T.\,M. Keller, A linear bound for $\rho(n)$, \textit{J. Algebra}
\textbf{178} (1995), 643-652.
  
\bibitem{Khukhro} E.\,I. Khukrho and V.\,D. Mazurov, Unsolved Problems in Group Theory, The Kourovka Notebook, No. 20, Russian Academy of Sciences, 2022.

\bibitem{Li} J.B. Li, X. Xu, Y.C. Xu and Yong Yang, On the width of element order set of a finite group, \textit{Comm. Algebra}, \textbf{53} (2024), 2204–2212.

\bibitem{manz/wolf} {O. Manz and T.\,R. Wolf}, Representations of Solvable Groups, London Mathematical Society, Lecture Notes Series 185, Cambridge University Press, Cambridge, 1993.

\bibitem{Shi2} W.\,J. Shi, A characterization of Suzuki's simple groups, \textit{Proc. Amer. Math. Soc.} \textbf{114} (1992), 589-591.

\bibitem{Shinew}
{W. Shi}, Characterization of finite simple groups and related
topics, \textit{Adv. in Math. (China)} \textbf{20} (1991), 135-141.

\bibitem{ShuYang} {Y. Shu and Y. Yang}, On finite groups with element order set of width 2, \textit{Quaest. Math.}, \url{https://doi.org/10.2989/16073606.2026.2616688}.

\bibitem{Mihailescu} P. Mih\u{a}ilescu, Primary cyclotomic units and a proof of Catalan's conjecture, \textit{J. Reine Angew. Math.} \textbf{572} (2004), 167--195.

\bibitem{Vasilev} A.\,V. Vasil’ev and A.\,M. Staroletov, Recognizability of groups $G_2(q)$ by spectrum, \textit{Algebra
and Logic} \textbf{52} (2013), 1-14.

\bibitem{Zhang} J.\,P. Zhang, Arithmetical conditions on element orders and group structure, 
\textit{Proc. Amer. Math. Soc.} \textbf{123} (1995), 39-44.

\end{thebibliography}
\end{document}